\newcommand{\A}{\mathbf{A}}
\newcommand{\C}{\mathbf{C}}
\newcommand{\F}{\mathbb{F}}
\renewcommand{\H}{\mathbb{H}}
\renewcommand{\L}{\mathbb{L}}
\newcommand{\Q}{\mathbf{Q}}
\newcommand{\Z}{\mathbf{Z}}
\newcommand{\sA}{\mathcal{A}}
\newcommand{\sH}{\mathcal{H}}
\newcommand{\sK}{\mathcal{K}}
\newcommand{\sO}{\mathcal{O}}
\newcommand{\sX}{\mathcal{X}}
\newcommand{\et}{{\operatorname{\acute{e}t}}}
\newcommand{\Zar}{{\operatorname{Zar}}}
\newcommand{\Nis}{{\operatorname{Nis}}}
\newcommand{\Ab}{\operatorname{Ab}}
\newcommand{\Div}{\operatorname{Div}}
\newcommand{\Chow}{{\operatorname{Chow}}}
\newcommand{\AJ}{{\operatorname{AJ}}}
\newcommand{\tr}{{\operatorname{tr}}}
\newcommand{\alg}{{\operatorname{alg}}}
\renewcommand{\hom}{{\operatorname{hom}}}
\newcommand{\num}{{\operatorname{num}}}
\newcommand{\cont}{{\operatorname{cont}}}
\newcommand{\car}{\operatorname{car}}
\newcommand{\Spec}{\operatorname{Spec}}
\newcommand{\Pic}{\operatorname{Pic}}
\newcommand{\Hom}{\operatorname{Hom}}
\newcommand{\codim}{\operatorname{codim}}
\newcommand{\tors}{{\operatorname{tors}}}
\newcommand{\tate}{{\operatorname{Tate}}}
\newcommand{\op}{{\operatorname{op}}}
\newcommand{\cl}{\operatorname{cl}}
\newcommand{\corg}{\operatorname{corang}}
\newcommand{\Coker}{\operatorname{Coker}}
\newcommand{\Ker}{\operatorname{Ker}}
\newcommand{\IM}{\operatorname{Im}}
\newcommand{\Griff}{\operatorname{Griff}}
\newcommand{\s}{\scriptstyle}
\newcommand{\oo}{\operatornamewithlimits{\otimes}}
\newcommand{\inj}{\hookrightarrow}
\newcommand{\Inj}{\lhook\joinrel\longrightarrow}
\newcommand{\surj}{\rightarrow\!\!\!\!\!\rightarrow}
\newcommand{\Surj}{\relbar\joinrel\surj}
\newcommand{\by}[1]{\overset{#1}{\longrightarrow}}
\newcommand{\iso}{\by{\sim}}
\newcommand{\yb}[1]{\overset{#1}{\longleftarrow}}
\newcommand{\osi}{\yb{\sim}}
\renewcommand{\lim}{\varprojlim}
\newcommand{\colim}{\varinjlim}
\newcounter{spec}
\newenvironment{thlist}{\begin{list}{\rm{(\roman{spec})}}%
{\usecounter{spec}\labelwidth=20pt\itemindent=0pt\labelsep=10pt}}%
{\end{list}}%
\newtheorem{thm}{Th\'eor\`eme}[section]
\newtheorem{lemme}[thm]{Lemme}
\newtheorem{lemmecle}[thm]{Lemme-cl\'e}
\newtheorem{prop}[thm]{Proposition}
\newtheorem{cor}[thm]{Corollaire}
\theoremstyle{definition}
\newtheorem{defn}[thm]{D\'efinition}
\theoremstyle{remark}
\newtheorem{rque}[thm]{Remarque}
\newtheorem{rques}[thm]{Remarques}
\newtheorem{conv}[thm]{Convention}
\numberwithin{equation}{section}
\begin{document}

\title{Classes de cycles motiviques \'etales}
\author{Bruno Kahn}
\address{Institut de Math\'ematiques de Jussieu\\UMR 7586\\ Case 247\\4 place
Jussieu\\75252 Paris Cedex 05\\France}
\email{kahn@math.jussieu.fr}
\date{11 juin 2011}
\begin{altabstract} Let $X$ be a smooth variety over a field $k$, and $l$ be a prime number. We
construct an exact sequence
\[0\to H^0(X,\sH^3_\cont(\Z_l(2)))\otimes \Q/\Z\to H^0(X,\sH^3_\et(\Q_l/\Z_l(2)))\to
C_\tors\to 0\]
where $\sH^i_\et(\Q_l/\Z_l(2))$ and $\sH^i_\cont(\Z_l(2))$  are the Zariski sheaves associated to \'etale and continuous \'etale cohomology and $C$ is the cokernel of Jannsen's $l$-adic cycle class on
$CH^2(X)\otimes \Z_l$ if $l\ne \text{char\ } k$ or a variant of it if $l=\text{char\ } k$. If
$k=\C$, this gives another proof of a theorem of Colliot-Th\'el\`ene--Voisin, avoiding a
recourse to the Bloch-Kato conjecture in degree $3$. If $k$ is separably closed and $l\ne \text{char\ } k$, still in the spirit of Colliot-Th\'el\`ene and Voisin we get an exact sequence
\begin{multline*}
0\to \Griff^2(X,\Z_l)_\tors\to H^3_\tr(X,\Z_l(2))\otimes \Q/\Z\\
\to H^0(X,\sH^3_\cont(\Z_l(2)))\otimes \Q/\Z\to \Griff^2(X,\Z_l)\otimes \Q/\Z\to 0
\end{multline*}
where $H^3_\tr(X,\Z_l(2))$ is the quotient of $l$-adic cohomology by the first step of the
coniveau filtration and $\Griff^2(X,\Z_l)$ is an $l$-adic Griffiths group. 

If $k$ is the algebraic closure of a finite field $k_0$ and $X$ is ``of abelian type" and verifies the Tate conjecture, $\Griff^2(X,\Z_l)$ is torsion and
$H^0(X,\sH^3_\et(\Q_l/\Z_l(2)))$ is finite provided $H^3_\tr(X,\Q_l(2))=0$. On the other hand,
a theorem of Schoen gives an example where $H^0(X,\sH^3_\et(\Q_l/\Z_l(2)))$ is finite but
\allowbreak $H^3_\tr(X,\Q_l(2))\ne 0$.
\end{altabstract}
\keywords{Cycle class map, unramified cohomology, continuous \'etale cohomology, motivic cohomology}
\subjclass{14C25, 14E22, 14F20, 14F42, 14Gxx}
\maketitle

\tableofcontents

\section{Introduction} Soient $k$ un corps, $X$ une $k$-vari\'et\'e lisse et $l$ un nombre
premier diff\'erent de $\car k$. Uwe Jannsen a d\'efini une classe de cycle $l$-adique
\begin{equation}\label{eqjan}
CH^2(X)\otimes \Z_l\by{\cl^2} H^4_\cont(X,\Z_l(2))
\end{equation}
\`a valeurs dans sa cohomologie \'etale continue \cite[Lemma 6.14]{jannsen0}. En i\-mi\-tant sa
construction \`a partir d'un th\'eor\`eme de Geisser et Levine \cite{gl2}, on obtient une
variante
$p$-adique de \eqref{eqjan} si $k$ est de caract\'eristique $p>0$, o\`u le second membre est 
une version continue de la cohomologie de Hodge-Witt logarithmique. Notons
$\sH^3_\et(\Q_l/\Z_l(2)))$ (resp.
$\sH^3_\cont(\Z_l(2)))$) le faisceau Zariski associ\'e au pr\'efaisceau $U\mapsto
H^3_\et(U,\Q_l/\Z_l(2))$ (resp. $U\mapsto H^3_\cont(U,\Z_l(2))$).  Le but de cet article est de
d\'emontrer l'analogue $l$-adique d'un th\'eor\`eme de Jean-Louis Colliot-Th\'el\`ene et Claire
Voisin
\cite[th. 3.6]{ct-v}:

\begin{thm} \label{p2et} Soit $l$ un nombre premier quelconque et soit $C$ le conoyau de
\eqref{eqjan}. On a  une suite exacte
\[0\to H^0(X,\sH^3_\cont(\Z_l(2)))\otimes \Q/\Z\by{f} H^0(X,\sH^3_\et(\Q_l/\Z_l(2)))\by{g}
C_\tors\to 0.\]
\end{thm}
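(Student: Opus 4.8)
The plan is to build the exact sequence out of the coefficient sequence

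$$0 \to \Z_l(2) \to \Q_l(2) \to \Q_l/\Z_l(2) \to 0$$

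interpreted at the level of Zariski sheaves associated to the relevant étale and continuous étale cohomology presheaves. Write out the resulting long exact sequence of the sheaves $\sH^i_\cont(\Z_l(2))$, $\sH^i_\cont(\Q_l(2))$, and $\sH^i_\et(\Q_l/\Z_l(2))$, take global sections over $X$ (i.e. $H^0(X,-)$), and read off the segment involving $H^0(X,\sH^3_\cont(\Z_l(2)))$ and $H^0(X,\sH^3_\et(\Q_l/\Z_l(2)))$. Because $\Q_l(2)$ has a $\Q$-vector space structure, the groups $H^0(X,\sH^i_\cont(\Q_l(2)))$ are uniquely divisible, so tensoring the relevant map by $\Q/\Z$ turns the image of $H^0(X,\sH^3_\cont(\Z_l(2)))$ into $H^0(X,\sH^3_\cont(\Z_l(2)))\otimes\Q/\Z$, producing the injective left-hand map $f$ in the statement. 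This is the expected mechanism behind the factor $\otimes\,\Q/\Z$ on the left.

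The cokernel of $f$ then sits in front of the contribution of $H^0(X,\sH^4_\cont(\Z_l(2)))$, so the second task is to identify $C_\tors$ with exactly this cokernel. Here I would invoke the coniveau (Bloch–Ogus / Gersten) spectral sequence for continuous étale cohomology, which relates the Zariski cohomology $H^p(X,\sH^q_\cont(\Z_l(2)))$ to $H^{p+q}_\cont(X,\Z_l(2))$. The key identification is between $H^0(X,\sH^4_\cont(\Z_l(2)))$ (an $E_2^{0,4}$ term) and the global unramified cohomology receiving the cycle class \eqref{eqjan}. The point is that Jannsen's cycle class $\cl^2$ lands, via the coniveau filtration, in the piece of $H^4_\cont(X,\Z_l(2))$ that maps to $H^0(X,\sH^4_\cont(\Z_l(2)))$, and that the cokernel $C$ of $\cl^2$ controls precisely the torsion that survives. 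Tracking the divisible part of $H^4$ and comparing with the image of the cycle class should yield $g$ with image $C_\tors$.

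The main obstacle I anticipate is the bookkeeping around torsion and divisibility at the transition from $\sH^3$ to $\sH^4$: one must show that the relevant connecting map, after $\otimes\,\Q/\Z$, produces \emph{exactly} the torsion subgroup $C_\tors$ rather than $C$ itself or some larger subquotient. This requires knowing that the coniveau filtration on $H^4_\cont(X,\Z_l(2))$ interacts correctly with divisibility, i.e. that passing to torsion commutes with the edge maps of the spectral sequence in this range, and that the image of $\cl^2$ saturates against the divisible part. The novelty advertised in the abstract is that this argument can be made without the degree-$3$ Bloch–Kato conjecture over $\C$; so I would take care that every step uses only the coefficient sequence, the coniveau spectral sequence, and formal properties of the sheaves $\sH^i$, avoiding any appeal to Beilinson–Lichtenbaum in the critical degree. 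The case $l=\car k$ is then handled uniformly by replacing continuous étale cohomology with the logarithmic Hodge–Witt variant and Jannsen's class with the Geisser–Levine construction cited in the introduction, the sheaf-theoretic formalism being identical.
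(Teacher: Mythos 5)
Your architecture is that of Colliot-Th\'el\`ene--Voisin transposed to the $l$-adic setting, and it is not the route the paper takes; more importantly, the two steps that carry all the content are asserted rather than proved. First, $H^0(X,-)$ does not commute with $\otimes\,\Q/\Z$: for a torsion-free sheaf $\sF$ (here $\sF=\sH^3_\cont(\Z_l(2))$, torsion-free by Merkurjev--Suslin, cf.\ lemme \ref{l3.4}) one only gets an exact sequence
\[0\to H^0(X,\sF)\otimes\Q/\Z\to H^0(X,\sF\otimes\Q/\Z)\to H^1(X,\sF)_\tors\to 0,\]
so the left-hand term of your sequence differs from what the sheaf-level coefficient sequence actually produces by $H^1(X,\sH^3_\cont(\Z_l(2)))_\tors$ --- a group that is not negligible but must itself be matched against part of $C_\tors$. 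Second, and this is the crux: the coniveau spectral sequence for $R\alpha_*\Z_l(2)^c_X$ has $E_2^{2,2}=H^2(X,\sH^2_\cont(\Z_l(2)))$, which is \emph{not} $CH^2(X)\otimes\Z_l$; over a separably closed field it is $A^2_\alg(X)\otimes\Z_l$ (cycles modulo \emph{algebraic} equivalence, proposition \ref{p3.1}), and over a general $k$ --- the theorem is stated for arbitrary $k$ --- it is not identified at all. The cokernel of the edge maps you propose to use is therefore assembled from $E_2^{0,4}$, $E_2^{1,3}$ and $E_2^{2,2}$, and the claim that ``the cokernel $C$ of $\cl^2$ controls precisely the torsion that survives'' is not bookkeeping: it is the theorem. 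Note also that the one place where Colliot-Th\'el\`ene--Voisin invoke degree-$3$ Bloch--Kato is exactly the right-exactness of the sheaf sequence at $\sH^3_\et(\Q/\Z(2))$, so your stated intention to avoid Beilinson--Lichtenbaum in the critical degree is not compatible with running their argument unchanged, and you do not say what replaces it.

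The paper's proof is genuinely different. It first realizes the unramified group motivically: weight-$2$ Beilinson--Lichtenbaum (i.e.\ Merkurjev--Suslin) gives the exact sequence \eqref{eqlk}, $0\to CH^2(X)\to H^4_\et(X,\Z(2))\to H^0(X,\sH^3_\et(\Q/\Z(2)))\to 0$, so that the unramified group is literally a cokernel involving \'etale motivic cohomology. The cycle class is then studied through the cone $K_X(2)$ of \eqref{eq0l}: by Geisser--Levine (th\'eor\`eme \ref{t1} a), b)) that morphism is an isomorphism mod $l^r$, so $K_X(2)_\et$ has uniquely divisible cohomology (proposition \ref{ph1}, corollaire \ref{c1a}); a diagram chase against \eqref{eqlk} yields the surjection onto $C_\tors$ with divisible kernel (proposition \ref{p2.1}), and the kernel is identified using $K_X(2)\otimes\Q\iso R\alpha_*K_X(2)_\et$ together with a coniveau spectral sequence for $K(2)$ and purity (lemmes \ref{l2.2}, \ref{l2.3}, \ref{l2.3p}, \ref{l2.4} and proposition \ref{p2.2}). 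Only weight-$2$ Bloch--Kato enters, which is the whole point of the article.
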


Noter que $C_\tors$ est fini si
$H^4_\et(X,\Z_l(2))$ est un
$\Z_l$-module de type fini: ceci se produit pour $l\ne p$ si $k$ est fini, ou plus
g\'en\'eralement si les groupes de cohomologie galoisienne de
$k$ \`a coefficients finis sont finis, par exemple (comme me l'a fait remarquer J.-L.
Colliot-Th\'el\`ene) si $k$ est un corps local [sup\'erieur] ou un corps s\'eparablement clos.
Dans ces cas, le th\'eor\`eme \ref{p2et} implique donc que $H^0(X,\sH^3_\et(\Q_l/\Z_l(2)))$ est
extension d'un groupe fini par un groupe divisible. (Pour $l=p$, $C_\tors$ est fini si $k$ est
fini et $X$ projective d'apr\`es \cite[p. 589, prop. 4.18]{gros-suwa}.)

Lorsque $k=\C$, ceci donne une autre d\'emonstration du th\'eor\`eme de
Colliot-Th\'el\`ene--Voisin en utilisant l'isomorphisme de comparaison entre cohomologies de
Betti et $l$-adique et sa compatibilit\'e aux classes de cycles respectives. 

La d\'emonstration de \cite[th. 3.6]{ct-v}  donn\'ee par Colliot-Th\'el\`ene et Voisin utilise
l'exactitude du complexe de faisceaux Zariski de cohomologie de Betti
\[0\to \sH^3_\cont(\Z(2))\to \sH^3_\cont(\Q(2))\to \sH^3_\et(\Q/\Z(2))\to 0.\]

Son exactitude \`a gauche d\'ecoule du th\'eor\`eme de Merkurjev-Suslin, c'est-\`a-dire la
conjecture de Bloch-Kato en degr\'e $2$; celle \`a droite d\'ecoule de la conjecture de
Bloch-Kato en degr\'e
$3$, dont la d\'emonstration a \'et\'e conclue r\'ecemment par Voevodsky et un certain nombre
d'auteurs. 

La d\'emonstration donn\'ee ici \'evite le recours \`a cette derni\`ere conjecture: elle ne
repose que sur le th\'eor\`eme de Merkurjev-Suslin plus un formalisme triangul\'e un peu
sophistiqu\'e, mais dont, je pense, la sophistication est bien inf\'erieure aux ingr\'edients
de la preuve du th\'eor\`eme de Voevodsky et al.
 
 Son principe est le suivant. La classe de cycle \eqref{eqjan} se
prolonge en une classe ``\'etale"
\begin{equation}\label{eqet}
H^4_\et(X,\Z(2))\otimes \Z_l\to H^4_\et(X,\Z_l(2))
\end{equation}
o\`u le terme de gauche est un groupe de cohomologie motivique \'etale de $X$. Le
th\'eor\`eme de comparaison de la cohomologie motivique \'etale \`a coefficients
finis avec la cohomologie \'etale des racines de l'unit\'e tordues ou de Hodge-Witt
logarithmique (th\'eor\`eme
\ref{t1} a) et b)) implique que \eqref{eqet} est de noyau divisible et de conoyau sans torsion.
On en d\'eduit une surjection $g$ de noyau divisible dans le th\'eor\`eme \ref{p2et} 
\`a l'aide de la suite exacte
\[
0\to CH^2(X)\to H^4_\et(X,\Z(2))\to H^0(X,\sH^3_\et(\Q/\Z(2)))\to 0.
\]
qui est rappel\'ee/\'etablie dans la proposition \ref{p1}. Ceci est fait au \S \ref{3.2}. La
d\'etermination du noyau est plus technique et je renvoie au \S \ref{s3.2} pour les d\'etails.

Pour justifier la structure du noyau et du conoyau de \eqref{eqet}, il faut consid\'erer le
``c\^one" de l'application classe de cycle: ceci est expliqu\'e au \S \ref{ladique2}. 

On obtient de plus des renseignements suppl\'ementaires sur le groupe
$H^0(X,\sH^3_\cont(\Z_l(2)))\otimes \Q/\Z$:

\begin{enumerate}
\item Si $k$ est fini et $X$ projective lisse, dans la classe $B_\tate(k)$ de \cite[D\'ef. 1
b)]{cell} ce groupe est nul (\S\ref{fini}).
\item Si $k$ est s\'eparablement clos et $l\ne \car k$, toujours dans l'esprit de \cite{ct-v}
on a une suite exacte (cf. corollaire \ref{c3.2}):
\[0\to H^3_\tr(X,\Z_l(2))\to H^0(X,\sH^3_\cont(\Z_l(2)))\to \Griff^2(X,\Z_l)\to 0\]
o\`u $\Griff^2(X,\Z_l)$ est le groupe des cycles de codimension $2$ \`a coefficients
$l$-adiques, modulo l'\'equivalence alg\'ebrique, dont la classe de cohomologie $l$-adique est
nulle, et $H^3_\tr(X,\Z_l(2))$ est le quotient de $H^3_\cont(X,\Z_l(2))$ par le premier cran de
la filtration par le coniveau. 

Comme le groupe $H^0(X,\sH^3_\cont(\Z_l(2)))$ est sans torsion (lemme  \ref{l3.4}), on en
d\'eduit une suite exacte (proposition \ref{l4.2}):
\begin{multline*}
0\to \Griff^2(X,\Z_l)_\tors\to H^3_\tr(X,\Z_l(2))\otimes \Q/\Z\\
\to H^0(X,\sH^3_\cont(\Z_l(2)))\otimes \Q/\Z\to \Griff^2(X,\Z_l)\otimes \Q/\Z\to 0.
\end{multline*}
\item Si $k$ est la cl\^oture alg\'ebrique d'un corps fini $k_0$ et que $X$ provient de la
classe $B_\tate(k_0)$ de \cite{cell}, le groupe $\Griff^2(X,\Z_l)$ est de torsion et la suite
exacte ci-dessus se simplifie en:
\begin{multline*}
0\to \Griff^2(X,\Z_l)\to H^3_\tr(X,\Z_l(2))\otimes \Q/\Z\\
\to H^0(X,\sH^3_\cont(\Z_l(2)))\otimes \Q/\Z\to 0
\end{multline*}
(th\'eor\`eme \ref{t3.1}). En particulier, $H^0(X,\sH^3_\et(\Q_l/\Z_l(2))$ est fini d\`es que
$H^3_\tr(X,\Q_l(2))=0$ (ceci est conjecturalement vrai sur tout corps s\'eparablement clos, cf.
remarque \ref{r4.3}), mais un exemple de Schoen montre que cette condition n'est pas
n\'ecessaire (Proposition \ref{p5.2} et th\'eor\`eme \ref{t5.1}).
\end{enumerate}

\enlargethispage*{20pt}

\subsection*{Remerciements} Cet article est l'\'elaboration d'un texte r\'edig\'e en janvier
2010, lui-m\^eme directement inspir\'e de plusieurs discussions avec Jean-Louis
Colliot-Th\'e\-l\`e\-ne autour de son travail avec Claire Voisin \cite{ct-v}, alors en cours
d'a\-ch\`e\-ve\-ment. Il a ensuite b\'en\'efici\'e de la lecture de \cite{ct-v}, ainsi que de
discussions avec Colliot-Th\'el\`ene autour d'un article commun en projet
\cite{ctk}: je le remercie de sa lecture critique de versions pr\'eliminaires de ce texte, et
notamment  de m'avoir fait remarquer qu'il n'est pas n\'ecessaire de supposer $k$ de type fini
sur son sous-corps premier dans le th\'eor\`eme \ref{p2et}. Je remercie l'IMPA de Rio de
Janeiro pour son hospitalit\'e pendant la fin de sa r\'edaction, ainsi que la coop\'eration
franco-br\'esilienne pour son soutien. Pour finir, je remercie Luc Illusie de m'avoir indiqu\'e
une d\'emonstration (triviale!) du lemme \ref{lillusie}, lemme qui \`a ma connaissance ne figure
nulle part dans la litt\'erature.

\section{Groupes de Chow sup\'erieurs}

Cette section comporte presque exclusivement des rappels sur les grou\-pes
de Chow sup\'erieurs: le lecteur au courant peut la parcourir 
rapidement. Elle a pour but principal de fournir une preuve compl\`ete de la proposition
\ref{p1}, \'evitant le complexe $\Gamma(2)$ de Lichtenbaum.

\subsection{Groupes de Chow sup\'erieurs}\label{2.1} Soit $k$ un corps.
Dans \cite{bloch}, Bloch associe \`a tout $k$-sch\'ema alg\'ebrique
$X$ des complexes de groupes ab\'eliens $z^n(X,\bullet)$ ($n\ge 0$), concentr\'es en degr\'es
(homologiques) $\ge 0$: rappelons qu'on pose $\Delta^p= \Spec k[t_0,\dots,t_p]/(\sum t_i-1)$,
que $z^n(X,p)$ est le groupe ab\'elien libre sur les ferm\'es int\`egres de codimension $n$ de
$X\times_k\Delta^p$ qui rencontrent les faces proprement, et que la
diff\'erentielle $d_p$ est obtenue comme somme altern\'ee des intersections avec les faces
de dimension $p-1$. Les groupes d'homologie
$CH^n(X,p)$ de $z^n(X,\bullet)$ sont les \emph{groupes de Chow sup\'erieurs} de $X$: on a
$CH^n(X,0)=CH^n(X)$ par construction. 

Les $z^n(X,\bullet)$ sont contravariants pour les morphismes plats, en
par\-ti\-cu\-lier \'etales; ils d\'e\-fi\-nis\-sent en fait des complexes de faisceaux sur le
petit site \'etale d'un sch\'ema lisse $X$ donn\'e. Ils sont aussi covariants pour les
morphismes propres, en particulier pour les immersions ferm\'ees.

Si $Y$ est un ferm\'e de $X$, on notera ici 
\begin{align*}
z^n_Y(X,\bullet)&=\text{Fib}\left(z^n(X,\bullet)\by{j^*}z^n(X-Y,\bullet)\right)\\
CH^n_Y(X,p)&=H_p(z^n_Y(X,\bullet))
\end{align*}
o\`u $j:X-Y\to X$ est l'immersion ouverte compl\'ementaire et Fib d\'esigne la fibre
homotopique (d\'ecal\'e du \emph{mapping cone}). Si on tensorise par un groupe ab\'elien $A$, on
\'ecrit
$CH^n_Y(X,p,A)$.

On a le th\'eor\`eme fondamental suivant, qui est une vaste g\'en\'eralisation du lemme de
d\'eplacement de Chow (Bloch, \cite[Th. 3.1 et 4.1]{bloch}, preuves corrig\'ees dans
\cite{bloch2}):

\begin{thm}\label{tloc} a) Les groupes de Chow sup\'erieurs sont contravariants pour les
morphismes quelconques de but lisse entre vari\'et\'es quasi-projectives. Ils commutent aux limites projectives filtrantes \`a morphismes de transition affines.\\
b) Soient $X$ un $k$-sch\'ema
quasi-projectif
\'equ\-idi\-men\-sion\-nel,  $i:Y\to X$ un ferm\'e \'equidimensionnel et $j:U\to X$ l'ouvert
com\-pl\'e\-men\-tai\-re. Soit $d$ la codimension de $Z$ dans $X$. Alors le morphisme naturel
\[z^{n-d}(Y,\bullet)\by{i_*} z^n_Y(X,\bullet)\]
est un quasi-isomorphisme.\\
c) On dispose de produits d'intersection
\begin{equation}\label{eqint}
CH^m(X,p)\times CH^n(X,q)\to CH^{m+n}(X,p+q)
\end{equation}
pour $X$ quasi-projectif lisse.
\end{thm}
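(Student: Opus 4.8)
The plan is to derive all three statements from Bloch's moving lemma, which is the single technical input and the place where essentially all the difficulty is concentrated. In its sharpest form (the version corrected in \cite{bloch2}) the moving lemma asserts that, for a quasi-projective $k$-scheme $X$ and a finite family $\mathcal{W}$ of locally closed subvarieties, the subcomplex $z^n_{\mathcal{W}}(X,\bullet)\subset z^n(X,\bullet)$ spanned by cycles meeting all faces \emph{and} all members of $\mathcal{W}$ properly is a quasi-isomorphism onto $z^n(X,\bullet)$. Everything else is formal manipulation of this fact together with the simplicial structure of $z^n(X,\bullet)$.

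For (a), given a morphism $f\colon X'\to X$ with $X$ smooth quasi-projective, the pullback of cycles is not defined on all of $z^n(X,\bullet)$, since a cycle meeting the faces properly need not pull back to one meeting them properly. I would take for $\mathcal{W}$ the subvarieties relevant to $f$ and the face maps, apply the moving lemma to replace $z^n(X,\bullet)$ by $z^n_{\mathcal{W}}(X,\bullet)$, on which $f^*$ is defined by the classical intersection construction---using that $f$ factors as the graph embedding $X'\hookrightarrow X'\times X$, a regular immersion because $X$ is smooth, followed by the smooth projection $X'\times X\to X$---and then check independence of the choices up to the chain homotopies furnished by the moving lemma. Functoriality and compatibility with the earlier flat pullback follow the same way. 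The commutation with cofiltered limits of schemes along affine transition maps is separate and elementary: each generator of $z^n(X,p)$ is a closed integral subscheme of finite presentation, hence descends to a finite level and already meets the faces properly there, so $z^n(\lim_i X_i,p)=\colim_i z^n(X_i,p)$ and homology commutes with the filtered colimit.

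For (b), the map $i_*$ is injective on cycle groups because a codimension-$n$ integral subvariety of $X\times\Delta^p$ lying over $Y$ is precisely a codimension-$(n-d)$ subvariety of $Y\times\Delta^p$, and it meets the faces of $X$ properly iff it meets those of $Y$ properly; this identifies $z^{n-d}(Y,\bullet)$ with the naive subcomplex of $z^n(X,\bullet)$ of cycles supported on $Y$. The real content is therefore that
\[
z^n_Y(X,\bullet)\to z^n(X,\bullet)\by{j^*} z^n(U,\bullet)
\]
is a homotopy fibre sequence, equivalently that $j^*$ is a quasi-surjection: every cycle on $U$ meeting the faces properly is, up to a boundary, the restriction of a cycle on $X$ meeting the faces properly. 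This is exactly where the moving lemma intervenes, now to move the closure in $X$ of a given cycle on $U$ off the excess locus over $Y$ while preserving good position with respect to the faces; the homotopy fibre is then computed by the subcomplex supported on $Y$, that is, by $z^{n-d}(Y,\bullet)$ via $i_*$.

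For (c), on smooth quasi-projective $X$ I would build the product as the composite of an external product $z^m(X,\bullet)\otimes z^n(X,\bullet)\to z^{m+n}(X\times X,\bullet)$, defined on cycles by $\times$ after an Eilenberg--Zilber shuffle of the simplicial variables, followed by pullback along the diagonal $\delta\colon X\to X\times X$; since $X$ is smooth, $\delta$ is a regular immersion and $\delta^*$ is the contravariant map supplied by (a). Associativity and graded-commutativity up to homotopy are then standard simplicial bookkeeping. The main obstacle throughout is the moving lemma itself: it is the only geometrically non-formal ingredient, it is precisely the step whose original argument in \cite{bloch} required the correction of \cite{bloch2}, and all three parts reduce to it.
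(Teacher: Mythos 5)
Note first that the paper offers no proof of this statement: it is recalled verbatim as Bloch's theorem, with a pointer to \cite[Th.~3.1 et 4.1]{bloch} and to the corrected proofs in \cite{bloch2}. So the comparison can only be with the arguments in those references. Your sketches of (a) and (c) do match them: contravariance for $f\colon X'\to X$ with $X$ smooth is obtained by restricting to the subcomplex of cycles in good position relative to $f$ (via the graph factorization through $X'\times X$), and the product is the external product composed with $\delta^*$; in both cases the input is the moving lemma for the subcomplex $z^n_{\mathcal{W}}(X,\bullet)$ of cycles meeting a finite family $\mathcal{W}$ properly, which is Bloch's Theorem 4.1 (valid for $X$ \emph{smooth} quasi-projective). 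The limit statement is indeed the elementary spreading-out argument you give.

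The genuine gap is in your treatment of (b). You reduce the localization theorem to the same $\mathcal{W}$-good-position moving lemma, proposing to ``move the closure in $X$ of a given cycle on $U$ off the excess locus over $Y$.'' That lemma is only available for $X$ smooth (Bloch proves it by generic translations on affine space and projecting cones on projective space), whereas part (b) is asserted for an arbitrary equidimensional quasi-projective $X$, with $Y$ an arbitrary equidimensional closed subscheme --- and this generality is used in the paper (e.g.\ in the proof of Lemma \ref{l2.2}, where $Y=\overline{\{x\}}$ is typically singular). Moreover, even when $X$ is smooth the difficulty is not where you place it: the cycles that must be moved live on the open $U$, whose closures in $X\times\Delta^p$ can fail to meet the faces properly over $Y$, and $U$ is neither affine nor projective, so the classical-style moving within $X$ does not apply. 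This is exactly the point at which the original argument of \cite{bloch} was incomplete; the corrected proof in \cite{bloch2} is a different mechanism, namely a homotopy constructed from the multiplicative/simplicial structure of the $\Delta^p$ themselves (moving in the simplicial directions rather than in $X$), combined with a delicate limiting argument. As written, your step ``this is exactly where the moving lemma intervenes'' invokes a statement that does not cover the case at hand, so (b) is not established by your proposal.
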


De la partie b) de ce th\'eor\`eme, on d\'eduit que pour tout groupe ab\'elien $A$, la th\'eorie
cohomologique
\`a supports
$(X,Z)\mapsto CH^n_Z(X,\bullet, A)$ d\'efinie sur la cat\'egorie des $k$-sch\'emas
quasi-projectifs lisses
\cite[def. 5.1.1 a)]{cthk} v\'erifie l'axiome {\bf COH1} de loc. cit., p. 53. D'apr\`es loc.
cit., th. 7.5.2, on a donc des isomorphismes pour tout $(n,p)$
\[CH^n(X,p,A)\iso H^{-p}_\Zar(X, z^n(-,\bullet)\otimes A)\iso H^{-p}_\Nis(X,
z^n(-,\bullet)\otimes A)\] 
pour $X$ quasi-projectif lisse. 

Si $X$ est seulement lisse, le second isomorphisme persiste.

\begin{defn}\label{d1} Soit $X$ un $k$-sch\'ema lisse (essentiellement de type fini), et soit
$\tau$ une topologie de Grothendieck moins fine que la topologie \'etale sur la cat\'egorie des
$k$-sch\'emas lisses de type fini: en pratique $\tau\in \{\Zar,\Nis,\et\}$. On note $A_X(n)_\tau$ le
complexe de faisceaux $z^n(-,\bullet)\otimes A[-2n]$ sur $X_\tau$ et 
$H^*_\tau(X,A(n))$ l'hypercohomologie de $X$ \`a coefficients dans le complexe $A_X(n)$ (pour
la topologie $\tau$). C'est la \emph{cohomologie motivique de poids $n$ \`a coefficients dans
$A$} pour la topologie concern\'ee. \\
Pour simplifier, on supprime $\tau$ de la notation si
$\tau=\Zar$ ou $\Nis$ (voir ci-dessus).
\end{defn}

On a donc un isomorphisme, pour $X$ quasi-projectif lisse: 
\begin{equation}\label{eqiso}
CH^n(X,2n-i)\iso H^i(X, A(n)).
\end{equation}

On a
\begin{align}
\Z_X(0) &=\Z\label{eqz0}\\
\Z_X(1)&\simeq \sO_X^*[-1].\label{eqz1}
\end{align}
(\cite[Cor. 6.4]{bloch} pour le second quasi-isomorphisme).

L'isomorphisme \eqref{eqz1} se g\'en\'eralise ainsi:

\begin{thm}[Nesterenko-Suslin, Totaro]\label{tK} Supposons que $X=\Spec K$, o\`u $K$ est un corps.
L'isomorphisme \eqref{eqz1} et les produits \eqref{eqint} induisent des isomorphismes
\begin{align*}
K_n^M(K)&\iso H^n(K,\Z(n))\\
K_n^M(K)/m&\iso H^n(K,\Z/m(n))
\end{align*}
pour $m>0$, o\`u $K_n^M$ d\'esigne la $K$-th\'eorie de Milnor..
\end{thm}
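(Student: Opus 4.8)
Le plan est de construire deux applications inverses l'une de l'autre entre $K_n^M(K)$ et $CH^n(\Spec K,n)\iso H^n(K,\Z(n))$ (cf. \eqref{eqiso} avec $i=n$), puis d'en d\'eduire l'\'enonc\'e \`a coefficients finis par coefficients universels. Pour l'application directe $\theta_n$, l'isomorphisme \eqref{eqz1} donne $K^*=K_1^M(K)\iso H^1(K,\Z(1))=CH^1(\Spec K,1)$, et les produits \eqref{eqint} induisent $CH^1(\Spec K,1)^{\otimes n}\to CH^n(\Spec K,n)$. Pour que l'homomorphisme d'alg\`ebres $T(K^*)\to\bigoplus_n CH^n(\Spec K,n)$ ainsi obtenu se factorise par $K_*^M(K)$, il suffit de v\'erifier la relation de Steinberg, l'anticommutativit\'e \'etant une cons\'equence formelle (et compatible avec la commutativit\'e gradu\'ee des produits \eqref{eqint}, le degr\'e simplicial de chaque facteur valant $1$). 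Je d\'emontrerais l'annulation de $\{a\}\cdot\{1-a\}$ dans $CH^2(\Spec K,2)$ en exhibant un $1$-cycle explicite de $z^2(\Spec K,3)$, c'est-\`a-dire une courbe trac\'ee dans $\Delta^3$ rencontrant proprement les faces, dont le bord r\'ealise ce produit: c'est le calcul classique de Totaro et de Nesterenko-Suslin.

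Pour l'application inverse $\lambda_n:CH^n(\Spec K,n)\to K_n^M(K)$, un g\'en\'erateur de $z^n(\Spec K,n)$ est un point ferm\'e $P$ de $\Delta^n$ \'evitant les faces, de corps r\'esiduel $L=K(P)$ fini sur $K$; les fonctions coordonn\'ees $t_1,\dots,t_n$ y sont des unit\'es, et je poserais $\lambda_n(P)=N_{L/K}\{t_1(P),\dots,t_n(P)\}$, norme en $K$-th\'eorie de Milnor. L'obstacle principal, et le c\oe ur technique de la preuve, est de montrer que $\lambda_n$ annule les bords provenant de $z^n(\Spec K,n+1)$. Le bord simplicial intersecte une courbe $C\subset\Delta^{n+1}$ avec les faces $\{t_i=0\}$; sur chacune, la coordonn\'ee correspondante acquiert un z\'ero ou un p\^ole, ce qui se traduit exactement par le symbole mod\'er\'e (r\'esidu) en $K$-th\'eorie de Milnor. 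L'annulation de $\lambda_n\circ d$ \'equivaut alors \`a la loi de r\'eciprocit\'e de Weil pour $K_{n+1}^M$ du corps des fonctions de $C$, que j'invoquerais \`a la Bass-Tate/Kato. C'est l'identification pr\'ecise du bord avec les r\'esidus, et le contr\^ole des normes sous sp\'ecialisation, qui demandent le plus de soin.

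Il reste \`a voir que $\theta_n$ et $\lambda_n$ sont inverses l'un de l'autre. L'\'egalit\'e $\lambda_n\circ\theta_n=\mathrm{id}$ est imm\'ediate sur les symboles, le produit des points rationnels $t=a_i$ \'etant envoy\'e sur $\{a_1,\dots,a_n\}$; il en r\'esulte que $\theta_n$ est injectif. La surjectivit\'e de $\theta_n$ (\'equivalente \`a $\theta_n\circ\lambda_n=\mathrm{id}$) revient \`a \'ecrire tout $0$-cycle de $\Delta^n$ hors des faces, modulo bords, comme somme de produits de points rationnels; je proc\'ederais par r\'ecurrence sur le degr\'e du corps r\'esiduel \`a l'aide de la compatibilit\'e des normes \`a la Bass-Tate et d'un argument de d\'eplacement. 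Enfin, l'\'enonc\'e \`a coefficients $\Z/m$ se d\'eduit formellement: comme $z^n(\Spec K,\bullet)$ est \`a termes libres, $\Z/m(n)=\Z(n)\otimes\Z/m$ calcule aussi le produit tensoriel d\'eriv\'e, et la suite des coefficients universels en degr\'e $n$, jointe \`a l'annulation $CH^n(\Spec K,n-1)=0$ ($z^n(\Spec K,n-1)=0$ faute de cycles de dimension n\'egative), donne $H^n(K,\Z/m(n))\iso CH^n(\Spec K,n)/m\iso K_n^M(K)/m$.
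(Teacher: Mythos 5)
Votre proposition est correcte et suit essentiellement la même voie que l'article : pour le premier isomorphisme, l'article se contente de renvoyer à Nesterenko--Suslin et Totaro, dont l'argument (relation de Steinberg via une courbe explicite dans $\Delta^3$, application inverse par les normes, réciprocité de Weil pour les courbes) est exactement celui que vous esquissez. La déduction de l'énoncé modulo $m$ à partir de $H^{n+1}(K,\Z(n))=0$ --- c'est-à-dire $z^n(\Spec K,n-1)=0$ faute de cycles de dimension négative --- est identique à celle de l'article.
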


\begin{proof} Voir \cite{ns} ou \cite{totaro} pour le premier \'enonc\'e; le second s'en
d\'eduit puisque $H^{n+1}(K,\Z(n))=0$.
\end{proof}

\begin{rque}\label{r1} L'isomorphisme \eqref{eqiso} vaut pour
$i\ge 2n$, m\^eme si $X$ n'est pas quasi-projectif. En effet, le terme de droite est
l'aboutissement d'une suite spectrale de coniveau \cite[Rk. 5.1.3 (3)]{cthk} qui, gr\^ace au
th\'eor\`eme
\ref{tloc}, prend la forme suivante:
\[E_1^{p,q}= \bigoplus_{x\in X^{(p)}} H^{q-p}(k(x),A(n-p))\Rightarrow H^{p+q}(X,A(n)).\]

On a $H^i(F,A(r))=0$ pour $i>r$ et
tout corps $F$, car $A_F(r)$ est un complexe concentr\'e en degr\'es $\le r$. On en d\'eduit
d\'ej\`a que
$H^i(X,A(n))=0$ pour $i>2n$, et on a \'evidemment
$CH^n(X,p,A)=0$ pour $p<0$. Quant \`a
$H^{2n}(X,A(n))$, il s'ins\`ere dans une suite exacte
\[E_1^{n-1,n}\by{d_1} E_1^{n,n}\to H^{2n}(X,A(n))\to 0\]
qui s'identifie \`a la suite exacte
\[\bigoplus_{x\in X_{(1)}} k(x)^*\otimes A\by{\Div} \bigoplus_{x\in X_{(0)}} A\to
CH^n(X)\otimes A\to 0\] via \eqref{eqz0} et \eqref{eqz1} (l'identification de la
diff\'erentielle
$d_1$ \`a l'application diviseurs est facile \`a partir du th\'eor\`eme \ref{tloc} appliqu\'e
pour $n=1$). D'autre part, on calcule ai\-s\'e\-ment que $CH^n(X,0,A)=CH^n(X)\otimes A$ sans
supposer
$X$ quasi-projectif.
\end{rque}

Le lemme suivant raffine une partie de la remarque \ref{r1}: sa d\'e\-mons\-tra\-tion est moins
\'el\'ementaire.

\begin{lemme}\label{l2.1} Le complexe $\Z_X(n)$ est concentr\'e en degr\'es $\le n$: autrement
dit,
$\sH^i(\Z_X(n))=0$ pour $i>n$.\qed
\end{lemme}

\begin{proof} La th\'eorie cohomologique \`a supports 
\[(X,Y)\mapsto H^*_Y(X,\Z(n))\] 
v\'erifie les axiomes {\bf COH1} et {\bf COH3} de \cite{cthk}: le premier, ``excision
\'etale", r\'esulte facilement du th\'eor\`eme \ref{tloc} b) et le second, invariance par
homotopie, est d\'emontr\'e dans \cite[th. 2.1]{bloch}. Il r\'esulte alors de \cite[cor.
5.1.11]{cthk} qu'elle v\'erifie la conjecture de Gersten (c'est d\'ej\`a d\'emontr\'e dans
\cite[th. 10.1]{bloch}). En particulier, les faisceaux $\sH^i(\Z(n))$ s'injectent dans leur
fibre g\'en\'erique, et on est r\'eduit au cas \'evident d'un corps de base.
\end{proof}

\subsection{Comparaisons} \`A partir de maintenant, $X$ d\'esigne un $k$-sch\'ema lisse.

\begin{thm}\label{t1} a)  Si $m$ est inversible dans $k$, il existe un
quasi-isomorphisme canonique 
\[(\Z/m)_X(n)_\et\iso \mu_m^{\otimes n}.\]
b) Si $k$ est de caract\'eristique $p>0$ et $r\ge 1$, il existe
un quasi-iso\-mor\-phis\-me canonique
\[(\Z/p^r)_X(n)_\et\iso \nu_r(n)[-n]\]
o\`u $\nu_r(n)$ est le $n$-\`eme faisceau de Hodge-Witt logarithmique.\\
c) Soit $\alpha$ la projection de $X_\et$ sur $X_\Zar$. Alors la fl\`eche d'adjonction
\[\Q_X(n)\to R\alpha_*\Q_X(n)_\et\]
est un isomorphisme.
\end{thm}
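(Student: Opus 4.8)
The plan is to construct the three comparison morphisms and then test them stalkwise on $X_\et$, reducing to separably closed fields by the Gersten resolution that is available here by the argument proving Lemma \ref{l2.1}. Parts a) and b) are in substance the comparison theorems of Suslin--Voevodsky and of Geisser--Levine, so the work is in producing the maps and citing the field computations.

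For a), the quasi-isomorphism \eqref{eqz1} together with the Kummer sequence gives $(\Z/m)_X(1)_\et\iso\mu_m$, and the products \eqref{eqint} then produce a canonical map $\mu_m^{\otimes n}\to (\Z/m)_X(n)_\et$. It remains to see this is an isomorphism on stalks, i.e.\ over a separably closed field $\bar K$ with $m$ invertible; there the finite-coefficient complex is concentrated in degree $0$ with value $\mu_m^{\otimes n}$ --- the top sheaf $\sH^n$ vanishes since $K^M_n(\bar K)/m=0$ by Theorem \ref{tK} and divisibility of $\bar K^\times$, and the remaining assertion is exactly the rigidity theorem of Suslin--Voevodsky, which notably does not invoke the Bloch--Kato conjecture. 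For b) the same recipe, using the $d\log$ symbol, yields a map $(\Z/p^r)_X(n)_\et\to\nu_r(n)[-n]$; the stalk is now concentrated in degree $n$, and bijectivity there is the field computation of Geisser--Levine, whose sole nontrivial ingredient is the Bloch--Kato--Gabber isomorphism $K^M_n/p^r\iso\nu_r(n)$ together with Theorem \ref{tK}.

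For c) the morphism is the adjunction $\Q_X(n)\to R\alpha_*\alpha^*\Q_X(n)=R\alpha_*\Q_X(n)_\et$. Since both sides satisfy homotopy invariance and localization, a Bloch--Ogus/Gersten argument reduces the statement to checking, for every field $K$ essentially of finite type over $k$, that
\[H^i(K,\Q(n))\to H^i_\et(K,\Q(n))\]
is an isomorphism for all $i$, the left-hand group being the cohomology of the complex $\Q(n)$ over $\Spec K$. To treat the field case I would run the Hochschild--Serre spectral sequence for $\Spec\bar K\to\Spec K$, with $\bar K$ a separable closure and $G=\operatorname{Gal}(\bar K/K)$:
\[E_2^{p,q}=H^p(G,H^q_\et(\bar K,\Q(n)))\Rightarrow H^{p+q}_\et(K,\Q(n)).\]
Over the separably closed field $\bar K$ the small \'etale site is punctual, so $H^q_\et(\bar K,\Q(n))=H^q(\bar K,\Q(n))$ is the plain motivic cohomology of $\bar K$; by Theorem \ref{tloc} a) it is the filtered colimit of the $H^q(L,\Q(n))$ over finite subextensions $L$, hence a discrete $\Q[G]$-module $V$. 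The key observation is that $H^p(G,V)=0$ for $p\ge 1$: indeed $H^p(G,V)=\colim_U H^p(G/U,V^U)$ and each term vanishes since $|G/U|$ is invertible on $V$. The spectral sequence therefore collapses onto its line $p=0$, giving $H^i_\et(K,\Q(n))=H^i(\bar K,\Q(n))^G$.

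It remains to identify this with $H^i(K,\Q(n))$, which is Galois descent for rational motivic cohomology: for a finite separable $L/K$ of degree $d$ the covariance recorded before Theorem \ref{tloc} provides a transfer with $\tfrac1d\,\tr\circ\operatorname{res}=\mathrm{id}$, whence $H^i(K,\Q(n))\iso H^i(L,\Q(n))^{\operatorname{Gal}(L/K)}$; passing to the colimit over $L\subset\bar K$ gives $H^i(K,\Q(n))\iso H^i(\bar K,\Q(n))^G$, compatibly with restriction. Comparing the two descriptions shows the comparison map is an isomorphism. The main obstacle is the reduction to fields: one must know that $R\alpha_*\Q(n)_\et$ obeys the Gersten formalism, so that it is controlled by its values at the generic points of the coniveau filtration. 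Once over a field the argument is the elementary descent computation above, and in particular c) uses neither the Bloch--Kato conjecture nor parts a) and b).
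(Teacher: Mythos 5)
For a) and b) the paper does not reprove anything: it simply cites Geisser--Levine (\cite[th.~1.5]{gl} for a), \cite[th.~8.5]{gl2} for b), the imperfect-field case of b) being supplied by the appendix, corollaire \ref{cgl}). Your attempt to sketch the proofs contains a genuine gap in a): a quasi-isomorphism of complexes of \emph{\'etale} sheaves is tested on stalks at geometric points, and those stalks are the strictly henselian local rings $\sO^{\mathrm{sh}}_{X,\bar x}$, not separably closed fields. The Gersten resolution of $\Z/m(n)$ reduces the cohomology of such a ring to that of the residue fields of \emph{all} of its points, and for a field $K$ which is not separably closed the assertion $H^i(K,\Z/m(n))\iso H^i_\et(K,\mu_m^{\otimes n})$ is false in degrees $>n$ and is the Beilinson--Lichtenbaum conjecture in degrees $\le n$ --- precisely the input the paper is organized to avoid in weight $>2$. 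What actually carries the stalk at $\bar x$ down to its separably closed residue field is rigidity for henselian local rings (equivalently, the Suslin--Voevodsky argument that torsion, homotopy invariant sheaves with transfers are locally constant for the \'etale topology); together with Suslin's computation over separably closed fields, that \emph{is} the theorem you are trying to reprove, so you should cite it rather than re-derive it. (Your construction of the comparison map by cup-products also needs the product at the level of complexes of sheaves, not merely \eqref{eqint}.) For b) the objection disappears: in characteristic $p$ the identification $\Z/p^r(n)\iso\alpha_*\nu_r(n)[-n]$ holds Zariski-locally for arbitrary fields, so the Gersten reduction to (not necessarily separably closed) residue fields is legitimate; this is exactly how the appendix (th\'eor\`eme \ref{tgl}) proceeds.

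For c) your argument is genuinely different from the paper's and is essentially sound. The paper argues stalkwise: the statement is a general fact for complexes of Zariski sheaves of $\Q$-vector spaces on a normal scheme, reduced to a single constant sheaf on a local scheme and settled by Deninger's theorem \cite{deninger}; no Gersten formalism and no transfers intervene. You instead run the coniveau reduction to fields and establish Galois descent for rational motivic cohomology by combining Hochschild--Serre, the vanishing of higher Galois cohomology of uniquely divisible discrete modules, and the transfer identities $\tr\circ\operatorname{res}=d$ and $\operatorname{res}\circ\tr=\sum_\sigma\sigma$. The descent computation itself is correct and stays entirely inside the higher-Chow formalism. The price is the step you yourself flag as the main obstacle: one must verify that $(X,Y)\mapsto \H^*_{Y,\et}(X,\Q(n)_\et)$ satisfies the Bloch--Ogus--Gabber axioms (\'etale excision and homotopy invariance) and that the hypercohomology and Hochschild--Serre spectral sequences for the unbounded-below complex $\Q(n)_\et$ converge --- exactly the difficulties the paper isolates in \S\ref{nb} and that its stalkwise argument sidesteps. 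If you fill in that step (finite rational cohomological dimension makes the convergence unproblematic), your route goes through, but the paper's is shorter and softer.
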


\begin{proof} a) et b) sont dus \`a Geisser-Levine: a) est \cite[th. 1.5]{gl} et b) est
\cite[th. 8.5]{gl2} (si $k$ est parfait: voir le corollaire \ref{cgl} en
g\'en\'eral)\footnote{Cette derni\`ere r\'ef\'erence indique que l'hypoth\`ese ``$X$ lisse sur
$k$" devrait \^etre remplac\'ee par ``$X$ r\'egulier de type fini sur $k$" dans une grande
partie de ce texte.}. c) est un fait g\'en\'eral pour un complexe de faisceaux Zariski
$C$ de
$\Q$-espaces vectoriels sur un sch\'ema normal $S$: on se ram\`ene au cas o\`u $S$ est local et
o\`u $C = A[0]$ est un faisceau concentr\'e en degr\'e $0$. Alors $A$ est un faisceau constant
de
$\Q$-espaces vectoriels et cela r\'esulte de \cite[th. 2.1]{deninger}.
\end{proof}

\begin{defn}\label{d2.1} Soit $n\ge 0$. Pour $l$ premier diff\'erent de $\car k$, on note
\[\Q_l/\Z_l(n)=\colim_r \mu_{l^r}^{\otimes n}.\]
Pour $l=\car k$, on note
\[\Q_l/\Z_l(n)=\colim_r \nu_r(n)[-n].\]
Enfin, on note
\[\Q/\Z(n) = \bigoplus_l \Q_l/\Z_l(n).\]
C'est un objet de la cat\'egorie d\'eriv\'ee des groupes ab\'eliens sur le gros site \'etale
de $\Spec k$. 
\end{defn}

Le th\'eor\`eme \ref{t1} montre qu'on a un isomorphisme, pour tout $X$ lisse sur $k$:
\begin{equation}\label{eq2.5}
(\Q/\Z)_X(n)_\et\iso (\Q/\Z)(n)_{|X}.
\end{equation}

Nous utiliserons cette identification dans la suite sans mention ul\-t\'e\-rieu\-re.

On a alors:

\begin{cor}\label{c2.1} Pour tout $i>n+1$, l'homomorphisme de faisceaux Zariski
\[\sH^{i-1}(R\alpha_* \Q/\Z(n))\to \sH^i(R\alpha_* \Z_X(n)_\et)\]
\'emanant du th\'eor\`eme \ref{t1} a) et b) est un isomorphisme.
\end{cor}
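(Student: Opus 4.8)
The plan is to deduce the corollary from the standard coefficient triangle in the derived category of étale sheaves on $X$. Tensoring the complex $\Z_X(n)_\et$ with the triangle $\Z\to\Q\to\Q/\Z\to\Z[1]$ of constant coefficient systems produces a distinguished triangle
\[\Z_X(n)_\et\to\Q_X(n)_\et\to(\Q/\Z)_X(n)_\et\by{+1}\]
in which I use the identification \eqref{eq2.5} to read the third term as $\Q/\Z(n)$. Applying the right derived pushforward $R\alpha_*$ along $\alpha\colon X_\et\to X_\Zar$ yields a distinguished triangle of complexes of Zariski sheaves
\[R\alpha_*\Z_X(n)_\et\to R\alpha_*\Q_X(n)_\et\to R\alpha_*\Q/\Z(n)\by{+1},\]
and the map in the statement is, up to sign, the boundary $\sH^{i-1}(R\alpha_*\Q/\Z(n))\to\sH^i(R\alpha_*\Z_X(n)_\et)$ of the long exact sequence of Zariski cohomology sheaves attached to this triangle.

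The first step is to control the middle term. By Theorem \ref{t1} c) the adjunction map $\Q_X(n)\to R\alpha_*\Q_X(n)_\et$ is an isomorphism, so $R\alpha_*\Q_X(n)_\et\simeq\Q_X(n)$, the Zariski motivic complex with rational coefficients. Next I would bound its cohomological amplitude: Lemma \ref{l2.1} gives $\sH^i(\Z_X(n))=0$ for $i>n$, and since $\Q$ is flat over $\Z$ one has $\sH^i(\Q_X(n))=\sH^i(\Z_X(n))\otimes\Q=0$ for $i>n$ as well. Thus both $\sH^{i-1}$ and $\sH^i$ of $R\alpha_*\Q_X(n)_\et$ vanish as soon as $i-1>n$, that is, for $i>n+1$.

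Reading off the long exact sequence then finishes the argument: for $i>n+1$ the two neighbouring terms $\sH^{i-1}(R\alpha_*\Q_X(n)_\et)$ and $\sH^i(R\alpha_*\Q_X(n)_\et)$ both vanish, so the boundary map is an isomorphism, exactly as claimed. The only point requiring care — and the place I expect the bookkeeping to be delicate rather than deep — is the identification of this abstract boundary with the homomorphism of the statement, which is defined through the explicit quasi-isomorphisms of Theorem \ref{t1} a) and b): one must check that tensoring $\Z_X(n)_\et$ with the coefficient triangle and passing to $\Q/\Z$-coefficients is compatible with those comparison isomorphisms. This is a formal compatibility of the triangle with the identification \eqref{eq2.5}, and poses no serious obstacle once that identification is in place.
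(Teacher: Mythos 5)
Votre démonstration est correcte et coïncide pour l'essentiel avec celle du texte : on prend la suite exacte longue de faisceaux de cohomologie Zariski associée au triangle des coefficients après application de $R\alpha_*$, puis on constate que les termes $\sH^{i-1}(R\alpha_*\Q_X(n)_\et)$ et $\sH^i(R\alpha_*\Q_X(n)_\et)$ s'annulent pour $i>n+1$ grâce au théorème \ref{t1} c) et au lemme \ref{l2.1}. Le point de compatibilité que vous signalez à la fin est bien la seule vérification formelle restante, et elle ne pose pas de difficulté.
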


\begin{proof} Dans la suite exacte de faisceaux Zariski
\begin{multline*}
\sH^{i-1}(R\alpha_* \Q_X(n)_\et)\to \sH^{i-1}(R\alpha_* \Q/\Z(n))\\
\to \sH^i(R\alpha_* \Z_X(n)_\et)\to \sH^i(R\alpha_* \Q_X(n)_\et) 
\end{multline*}
les deux termes extr\^emes sont nuls d'apr\`es le th\'eor\`eme \ref{t1} c) et le lemme
\ref{l2.1}.
\end{proof}

\subsection{Cohomologie \'etale de complexes non born\'es}\label{nb} Si $X$ est un sch\'ema de
dimension cohomologique \'etale \emph{a priori} non finie et si $C$ est un complexe de
faisceaux \'etales sur $X$, non born\'e inf\'erieurement, la consid\'eration de
l'hypercohomologie $H^*_\et(X,C)$ soul\`eve au moins trois difficult\'es:

\begin{enumerate}
\item une d\'efinition en forme;
\item la commutation aux limites;
\item la convergence de la suite spectrale d'hypercohomologie.
\end{enumerate}

Le premier probl\`eme est maintenant bien compris: il faut prendre une r\'esolution
K-injective, ou fibrante, de $C$, cf. par exemple Spaltenstein \cite[Th. 4.5 et Rem.
4.6]{spaltenstein}.

Le second et le troisi\`eme probl\`emes sont plus d\'elicats. Dans le cas de $\Z(n)$, le second
et implicitement le troisi\`eme est r\'esolu dans \cite[\S B.3 p. 1114]{kahnbbki} (pour la
cohomologie motivique de Suslin-Voevodsky). Rappelons l'argument: en utilisant le th\'eor\`eme
\ref{t1}, on peut ins\'erer $R\alpha_*\Z_X(n)_\et$ dans un triangle exact
\[R\alpha_*\Z_X(n)_\et\to \Q_X(n)\to R\alpha_*(\Q/\Z)_X(n)_\et\by{+1}.\]

Si $X$ est de dimension de Krull finie, l'hypercohomologie Zariski du second terme se comporte
bien, et celle du troisi\`eme terme aussi puisque c'est l'hypercohomologie \'etale d'un
complexe born\'e.

\subsection{Conjecture de Beilinson-Lichtenbaum} Cette conjecture concerne la comparaison entre
$H^*(X,A(n))$ et $H^*_\et(X,A(n))$, pour $A=\Z/m$, cf. \cite[th. 1.6]{gl}. Si $m$ est une
puissance d'un nombre premier $l\ne \car k$,  elle est
\'equivalente d'apr\`es Geisser-Levine 
\cite{gl} \`a la conjecture de Bloch-Kato en poids $n$ (pour le nombre premier $l$); donc en
poids $2$, au th\'eor\`eme de Merkurjev-Suslin. Sur un corps de caract\'eristique z\'ero, ceci
avait \'et\'e ant\'erieurement d\'emontr\'e par Suslin-Voevodsky \cite{suvo}.

De plus, pour $l=\car k$, une version de cette conjecture est d\'emontr\'ee par Geisser et
Levine dans
\cite{gl2}, cf. th\'eor\`eme \ref{tgl}. En ajoutant \`a tout ceci le th\'eor\`eme \ref{t1} c),
la conjecture de Beilinson-Lichtenbaum en poids
$n$ se retraduit en un triangle exact \cite[Th. 6.6]{voemihes}
\begin{equation}\label{eqbl}
\Z_X(n)\to  R\alpha_*\Z_X(n)_\et\to \tau_{\ge n+2}R\alpha_*\Z_X(n)_\et\to \Z_X(n)[1].
\end{equation}

Ce triangle exact contient l'\'enonc\'e (``Hilbert 90 en poids $n$"):
\[\sH^{n+1}(R\alpha_*\Z_X(n)_\et)=0.\]

\subsection{Une suite exacte} \label{2.4}

\begin{prop} \label{p1} Notons $\sH^3_\et(\Q/\Z(2))$ le faisceau Zariski associ\'e au
pr\'efaisceau $U\mapsto H^3_\et(U,\Q/\Z(2)$. Pour toute $k$-vari\'et\'e lisse $X$, on a une
suite exacte courte:
\begin{equation}\label{eqlk}
0\to CH^2(X)\to H^4_\et(X,\Z(2))\to H^0(X,\sH^3_\et(\Q/\Z(2)))\to 0.
\end{equation}
\end{prop}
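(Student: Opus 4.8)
The plan is to extract the exact sequence \eqref{eqlk} from the hypercohomology machinery developed in the previous subsections, exploiting the comparison between étale motivic cohomology and its Zariski pushforward. First I would apply the Leray (descent) spectral sequence for the projection $\alpha\colon X_\et\to X_\Zar$ to the complex $\Z_X(2)_\et$:
\[
E_2^{p,q}=H^p_\Zar\bigl(X,\sH^q(R\alpha_*\Z_X(2)_\et)\bigr)\Rightarrow H^{p+q}_\et(X,\Z(2)).
\]
The strategy is to identify the low-degree terms of this spectral sequence explicitly. The point is that for $q\le 2$ the sheaf $\sH^q(R\alpha_*\Z_X(2)_\et)$ should coincide with the usual motivic sheaf $\sH^q(\Z_X(2))$, because the Beilinson--Lichtenbaum triangle \eqref{eqbl} shows that $\Z_X(2)\to R\alpha_*\Z_X(2)_\et$ is an isomorphism on $\sH^q$ for $q\le 2$ (its cone $\tau_{\ge 4}R\alpha_*\Z_X(2)_\et$ being concentrated in degrees $\ge 4$, so that in particular ``Hilbert 90 en poids $2$'' gives $\sH^3(R\alpha_*\Z_X(2)_\et)=0$).

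Next I would compute $H^4_\et(X,\Z(2))$ through the edge terms. Since $\sH^q(R\alpha_*\Z_X(2)_\et)=0$ for $q=3$ and, by Lemma \ref{l2.1} together with the Beilinson--Lichtenbaum triangle, the only surviving sheaves in the relevant range produce a short filtration on $H^4_\et(X,\Z(2))$ with successive quotients
\[
H^2_\Zar\bigl(X,\sH^2(\Z_X(2))\bigr)\quad\text{and}\quad H^0_\Zar\bigl(X,\sH^4(R\alpha_*\Z_X(2)_\et)\bigr).
\]
I would identify the first term with $CH^2(X)$ via the coniveau spectral sequence of Remark \ref{r1} (which gives $H^4(X,\Z(2))\iso CH^2(X)$ through the identification $H^2(X,\sH^2(\Z(2)))\simeq CH^2(X)$, the classical Bloch--Quillen-type formula), and the second term with $H^0(X,\sH^3_\et(\Q/\Z(2)))$ by invoking Corollary \ref{c2.1}: indeed $\sH^4(R\alpha_*\Z_X(2)_\et)\simeq \sH^3(R\alpha_*\Q/\Z(2))$, and since the latter is the Zariski sheafification of $U\mapsto H^3_\et(U,\Q/\Z(2))$, taking $H^0_\Zar$ yields exactly $H^0(X,\sH^3_\et(\Q/\Z(2)))$.

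The main obstacle I anticipate is controlling the differentials and the edge maps of the spectral sequence precisely enough to see that the filtration on $H^4_\et(X,\Z(2))$ has exactly \emph{two} nonzero graded pieces in the asserted order, so that \eqref{eqlk} is genuinely short exact rather than merely a three-step filtration. Concretely, one must check that the relevant $d_2$ and $d_3$ differentials into and out of the terms $E_2^{2,2}$ and $E_2^{0,4}$ vanish; this should follow from the vanishing $\sH^3(R\alpha_*\Z_X(2)_\et)=0$ (which kills the potential sources/targets in column $q=3$) combined with degree reasons and the fact that $\sH^q(\Z_X(2))=0$ for $q>2$ by Lemma \ref{l2.1}. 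Once the spectral sequence collapses into the desired two-term extension, identifying the injection $CH^2(X)\inj H^4_\et(X,\Z(2))$ as the edge map and the surjection onto $H^0(X,\sH^3_\et(\Q/\Z(2)))$ as the projection to the top graded piece completes the argument.
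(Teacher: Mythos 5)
Your route --- the hypercohomology (Leray) spectral sequence $E_2^{p,q}=H^p_\Zar(X,\sH^q(R\alpha_*\Z_X(2)_\et))\Rightarrow H^{p+q}_\et(X,\Z(2))$ --- is genuinely different from the paper's, which simply takes the long exact hypercohomology sequence of the Beilinson--Lichtenbaum triangle \eqref{eqbl} for $n=2$: there the third term $\tau_{\ge 4}R\alpha_*\Z_X(2)_\et$ is concentrated in degrees $\ge 4$, so its $H^3$ vanishes and its $H^4$ equals $H^0(X,\sH^4(R\alpha_*\Z_X(2)_\et))$, while $H^4(X,\Z(2))=CH^2(X)$ and $H^5(X,\Z(2))=0$ are read off the \emph{coniveau} spectral sequence of the remarque \ref{r1}. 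Your identifications of the sheaves ($\sH^3(R\alpha_*\Z_X(2)_\et)=0$ by Hilbert 90, $\sH^4(R\alpha_*\Z_X(2)_\et)\simeq\sH^3_\et(\Q/\Z(2))$ by the corollaire \ref{c2.1}) are correct.

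The gap is in the claim that the filtration on $H^4_\et(X,\Z(2))$ has only the two graded pieces $E_\infty^{2,2}$ and $E_\infty^{0,4}$. In total degree $4$ the spectral sequence also contains $E_2^{3,1}=H^3(X,\sH^1(\Z_X(2)))$ and $E_2^{4,0}=H^4(X,\sH^0(\Z_X(2)))$, and the differentials you must kill leaving $E_2^{2,2}$ and $E_3^{0,4}$ land in $E_2^{4,1}$, $E_2^{5,0}$ and $E_3^{3,2}$. None of this is controlled by the lemme \ref{l2.1} (which only kills $\sH^q$ for $q>2$) nor by Hilbert 90 (which only kills the column $q=3$): the sheaves $\sH^q(\Z_X(2))$ for $q\le 0$ are not known to vanish --- that is the Beilinson--Soul\'e conjecture, which the paper explicitly does not assume (footnote to la remarque \ref{r2.1}). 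These terms do vanish, but for a reason you do not supply: the Gersten resolutions of $\sH^q(\Z_X(2))$ for $q\le 2$, built from $\Z(0)=\Z$, $\Z(1)\simeq\sO^*[-1]$ and the field-level vanishing $H^i(F,\Z(m))=0$ for $i>m$, give $H^p(X,\sH^q(\Z_X(2)))=0$ for $p>q$ in this range. That computation is precisely what the coniveau spectral sequence of la remarque \ref{r1} packages, so your argument is repairable, but as written the decisive input is missing and the sequence \eqref{eqlk} could a priori acquire extra graded pieces. You should also address the convergence of the hypercohomology spectral sequence for a complex unbounded below (cf.\ \S\ref{nb}); finiteness of the Krull dimension of $X$ suffices, but it has to be said.
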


\begin{proof} 
En prenant l'hypercohomologie de Zariski de $X$ \`a valeurs dans le triangle \eqref{eqbl} pour
$n=2$, on trouve une suite exacte
\begin{multline*}
0\to H^4(X,\Z(2))\to H^4_\et(X,\Z(2))\\
\to H^0(X,R^4\alpha_*\Z(2)_\et)\to H^5(X,\Z(2)). 
\end{multline*}

D'apr\`es la remarque \ref{r1}, on a $H^4(X,\Z(2))=CH^2(X)$ et $H^5(X,\Z(2))\allowbreak=0$.
D'autre part, le triangle exact
\[\Z_X(2)_\et\to \Q_X(2)_\et\to (\Q/\Z)_X(2)_\et\]
provenant du th\'eor\`eme \ref{t1} donne une longue suite exacte de faisceaux
\[R^3\alpha_*\Q_X(2)_\et \to R^3\alpha_*(\Q/\Z)_X(2)_\et\to R^4\alpha_*\Z_X(2)_\et\to
R^4\alpha_*\Q_X(2)_\et.\]

On a $R^3\alpha_*\Q_X(2)_\et=R^4\alpha_*\Q_X(2)_\et=0$ puisque $\Z_X(2)$ est concentr\'e en
degr\'es $\le 2$, cf. th\'eor\`eme \ref{t1} c). Ce qui conclut, via l'isomorphisme
\eqref{eq2.5}.
\end{proof}

\begin{rques}\label{r2.1} 1) La suite exacte \eqref{eqlk} appara\^\i t dans  \cite[Th. 1.1,
\'eq. (9)]{k}, avec $\Z(2)$ remplac\'e par le complexe de Lichtenbaum $\Gamma(2)$; \`a la
$2$-torsion pr\`es, elle est d\'ej\`a chez Lichtenbaum \cite[Th. 2.13 et rem. 2.14]{licht}. Il
est probable qu'on a un isomorphisme
\begin{equation}\label{eqtrunc}
\Gamma(2, X)\simeq \tau_{\ge 1}\left(z^2(X,\bullet)[-4]\right)
\end{equation}
dans $D(X_\Zar)$ pour tout $k$-sch\'ema lisse $X$.\footnote{Par
ailleurs, la conjecture de Beilinson-Soul\'e pr\'edit que $\Z(2)\to \tau_{\ge 1}\Z(2)$ est un
quasi-isomorphisme, mais elle n'a pas d'importance pour ce travail.} Une fonctorialit\'e
suffisante de cet isomorphisme impliquerait qu'il peut s'\'etalifier. Dans \cite[Th. 7.2]{bl},
un isomorphisme \eqref{eqtrunc} est construit pour  $X=\Spec k$. Mais
\eqref{eqtrunc} ne semble pas appara\^\i tre dans la litt\'erature en g\'en\'eral. 

2) En se reposant sur la conjecture de Bloch-Kato en poids $3$, on obtient de la
m\^eme mani\`ere une suite exacte
\begin{multline*}
0\to H^2(X,\sK_3)\to H^5_\et(X,\Z(3))\to
H^0(X,\sH^4_\et(\Q/\Z(3))\\
\to CH^3(X)\to H^6_\et(X,\Z(3)).
\end{multline*}

Cette suite appara\^\i t dans \cite[Rem. 4.10]{cell}, sauf que le premier terme est
$H^5(X,\Z(3))$; son identification avec $H^2(X,\sK_3)$ se fait \`a l'aide de la suite spectrale
de coniveau de la remarque \ref{r1}.
\end{rques}

\subsection{D'autres suites exactes}

\begin{prop}\label{p3} On a des suites exactes
\begin{multline*}
0\to H^3(X,\Z/m(2))\to H^3_\et(X,\Z/m(2))\to H^0(X,\sH^3_\et(\Z/m(2)))\\
\to CH^2(X)\otimes \Z/m\to H^4_\et(X,\Z/m(2))
\end{multline*}
($m>0$),
\begin{multline*}
0\to H^3(X,\Q/\Z(2))\to H^3_\et(X,\Q/\Z(2))\to H^0(X,\sH^3_\et(\Q/\Z(2)))\\
\to CH^2(X)\otimes \Q/\Z\to H^4_\et(X,\Q/\Z(2)).
\end{multline*}
\end{prop}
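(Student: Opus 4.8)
The plan is to imitate the proof of Proposition~\ref{p1}, but with the integral complex $\Z_X(2)$ replaced by its reduction modulo $m$. The decisive external input is the Beilinson--Lichtenbaum conjecture in weight~$2$ with coefficients $\Z/m$: for the part of $m$ prime to $\car k$ this is Merkurjev--Suslin, and for the $p$-primary part it is the Geisser--Levine version (théorème~\ref{tgl}). Taken together they say that the adjunction map $(\Z/m)_X(2)\to R\alpha_*(\Z/m)_X(2)_\et$ identifies the source with $\tau_{\le 2}R\alpha_*(\Z/m)_X(2)_\et$.

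First I would pin down the truncation degree, which is the one genuinely new feature compared with the integral triangle~\eqref{eqbl}. Since $\Z_X(2)$ is concentrated in degrees $\le 2$ (lemme~\ref{l2.1}) and $(\Z/m)_X(2)=\Z_X(2)\oo^L\Z/m$, a universal-coefficient argument shows that $(\Z/m)_X(2)$ is again concentrated in degrees $\le 2$. Crucially, there is \emph{no} Hilbert~90 vanishing available for finite coefficients, so the cone of the adjunction map is $\tau_{\ge 3}$ rather than the $\tau_{\ge 4}$ of the integral case; this one-step shift is exactly what turns the three-term sequence of Proposition~\ref{p1} into a five-term sequence. We thus obtain a triangle
\[(\Z/m)_X(2)\to R\alpha_*(\Z/m)_X(2)_\et\to \tau_{\ge 3}R\alpha_*(\Z/m)_X(2)_\et\by{+1}.\]
Taking Zariski hypercohomology, the right-hand term is concentrated in degrees $\ge 3$, so its hypercohomology vanishes in degrees $\le 2$ and equals $H^0(X,\sH^3_\et(\Z/m(2)))$ in degree~$3$, while the middle term computes $H^*_\et(X,\Z/m(2))$. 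Reading off the long exact sequence in degrees $3$ and $4$, using $\H^{\le 2}=0$ of the truncated complex for left-exactness and the identification $H^4(X,\Z/m(2))=CH^2(X)\otimes\Z/m$ from remarque~\ref{r1}, yields the first sequence.

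For the second sequence I would pass to the filtered colimit over $m$ along $\Q/\Z(2)=\colim_m\Z/m(2)$. Since $X$ is noetherian of finite Krull dimension, étale cohomology, Zariski hypercohomology and the formation of the sheaves $\sH^3_\et$ all commute with filtered colimits, so every term goes over to its $\Q/\Z(2)$ analogue, and exactness is preserved because filtered colimits are exact. I do not expect a serious obstacle here: once the finite-coefficient triangle with the correct truncation $\tau_{\ge 3}$ is established, the rest is a formal diagram chase together with the identifications already recorded in remarque~\ref{r1} and théorème~\ref{t1}. The only genuine content, and hence the crux, is the appeal to Beilinson--Lichtenbaum in weight~$2$ (Merkurjev--Suslin plus Geisser--Levine) that produces that triangle with the right truncation degree.
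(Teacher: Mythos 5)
Your proof is correct and is essentially the paper's argument: the paper also reduces Proposition~\ref{p3} to taking Zariski hypercohomology of the triangles
$(\Z/m)_X(2)\to R\alpha_*(\Z/m)_X(2)_\et\to \tau_{\ge 3}R\alpha_*(\Z/m)_X(2)_\et\by{+1}$ and its $\Q/\Z$ analogue, exactly as in the proof of Proposition~\ref{p1}. The only (cosmetic) difference is that the paper produces these triangles by tensoring \eqref{eqbl} with $\Z/m$ resp.\ $\Q/\Z$ au sens d\'eriv\'e, whereas you rederive the finite-coefficient triangle directly from Beilinson--Lichtenbaum (Merkurjev--Suslin plus Geisser--Levine) and then pass to the colimit over $m$ for the $\Q/\Z$ case; both routes rest on the same input and yield the same five-term sequences.
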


\begin{proof} Elle s'obtiennent comme dans la preuve de la proposition \ref{p1} en prenant la
cohomologie des triangles exacts
\[\Z/m(2)\to R\alpha_* (\Z/m)_\et(2)\to \tau_{\ge 3}R\alpha_* (\Z/m)_\et(2)\by{+1}\] 
\[\Q/\Z(2)\to R\alpha_* (\Q/\Z)_\et(2)\to \tau_{\ge 3}R\alpha_* (\Q/\Z)_\et(2)\by{+1}\] 
obtenus en tensorisant \eqref{eqbl} par $\Z/m$ ou $\Q/\Z$ au sens d\'eriv\'e. 
\end{proof}

On reconna\^\i t donc dans $H^3(X,\Z/m(2))$ le groupe $NH^3_\et(X,\Z/m(2))$ de Suslin \cite[\S
4]{suslin}. On peut sans doute montrer que la seconde suite exacte co\"\i ncide avec celle de
\cite[p. 790, rem. 2]{ctss}. 

\section{Cohomologie $l$-adique et $p$-adique}

Dans cette section, $k$ est un corps quelconque, de caract\'eristique $p\ge 0$.

\subsection{Classe de cycle $l$-adique et $p$-adique}\label{ladique2} Soit $l$ un nombre premier
quelconque. Pour toute
$k$-vari\'et\'e lisse $X$, on a des applications ``classe de cycle
$l$-adique"
\begin{equation}\label{eq1}
H^i_\et(X,\Z(n))\otimes \Z_l\to H^i_\cont(X,\Z_l(n)).
\end{equation}

Ces homomorphismes proviennent d'un morphisme de complexes (dans la cat\'e\-go\-rie
d\'eriv\'ee de la cat\'egorie des complexes de faisceaux sur $X_\et$)
\begin{equation}\label{eq0l}
\Z_X(n)_\et\oo^L \Z_l\to \Z_l(n)_X^c
\end{equation}
o\`u
\[
\Z_l(n)_X^c=
\begin{cases}
R\lim \mu_{l^r}^{\otimes n}&\text{si $l\ne p$}\\
R\lim \nu_{r}(n)[-n]&\text{si $l= p$.}
\end{cases}
\]  

Cette construction est d\'ecrite dans \cite[\S 1.4, en part. (1.8)]{glr} pour $l\ne p$ et dans
\cite[\S 3.5]{cell} pour $l=p$. Elle repose sur celles de Geisser-Levine aux crans finis dans
\cite{gl} pour $l\ne p$ et dans \cite[d\'em. du th. 8.3]{gl2} pour $l=p$.

\begin{rque} Pour $l\ne p$ et $i=2n$, la compos\'ee de \eqref{eq1} avec l'homomorphisme
$CH^n(X)\otimes \Z_l\to H^{2n}_\et(X,\Z(n))\otimes \Z_l$ \eqref{eqlk} n'est autre que la classe
de cycle de Jannsen \cite[Lemma 6.14]{jannsen0}: cela r\'esulte de la construction m\^eme dans
\cite{gl} de l'isomorphisme du th\'eor\`eme \ref{t1} a). Pour $l=p$, il est moins clair que
\eqref{eq1} soit compatible avec la classe de cycle de Gros \cite[p. 50, d\'ef. 4.1.7 et
p. 55, prop. 4.2.33]{gros}. Cela doit pouvoir se v\'erifier directement; comme je n'en aurai
pas besoin, je laisse cet ``exercice" aux lecteurs int\'eress\'es.
\end{rque}

Notons les isomorphismes \'evidents:
\begin{equation}\label{eq3.4}
\Z_l(n)_X^c\oo^L \Z/l^r\iso
\begin{cases}
 \mu_{l^r}^{\otimes n}&\text{si $l\ne p$}\\
\nu_{r}(n)[-n]&\text{si $l= p$.}
\end{cases}
\end{equation}

\begin{defn}\label{d3.1} On note respectivement $K_X(n)_\et$ et $K_X(n)$ le choix d'un c\^one
du morphisme \eqref{eq0l} et du morphisme compos\'e
\[\Z_X(n)\oo^L \Z_l\to R\alpha_*\Z_X(n)_\et \oo^L \Z_l\to R\alpha_* \Z_l(n)_X^c\]
de sorte qu'on a un morphisme 
\[K_X(n)\to R\alpha_* K_X(n)_\et\]
compatible avec le morphisme $\Z_X(n)\oo\limits^L \Z_l\to R\alpha_*\Z_X(n)_\et \oo\limits^L
\Z_l$.
\end{defn}

\begin{rque}
Rappelons que $K_X(n)$ et $K_X(n)_\et$ ne sont uniques qu'\`a isomorphisme non unique pr\`es;
le morphisme $K_X(n)\to R\alpha_* K_X(n)_\et$ n'a pas non plus d'unicit\'e particuli\`ere. En
particulier, ces choix ne sont fonctoriels en $X$ que pour les immersions ouvertes: cela
suffira pour nos besoins ici. Toutefois, on pourrait faire des choix plus rigides (fonctoriels
pour les morphismes quelconques entre sch\'emas lisses), quitte \`a travailler dans des
cat\'egories de mod\`eles convenables.
\end{rque}

En vertu du th\'eor\`eme \ref{t1}, \eqref{eq3.4} implique im\-m\'e\-dia\-te\-ment:

\begin{prop}\label{ph1}
Le morphisme \eqref{eq0l}$\otimes \Z/l^r$ est un isomorphisme pour tout entier $r\ge 1$.
Autrement dit, les faisceaux de cohomologie de $K_X(n)_\et$ sont uniquement $l$-divisibles.
\qed
\end{prop}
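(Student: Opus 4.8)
Le plan est de réduire la flèche \eqref{eq0l} modulo $l^r$ et de constater que ses deux membres se calculent canoniquement, devenant isomorphes au même faisceau; la flèche \eqref{eq0l}$\otimes \Z/l^r$ s'identifiera alors à l'isomorphisme de Geisser-Levine, donc sera un isomorphisme. Je commencerais par le membre de gauche. Les groupes $z^n(X,p)$ étant libres, le complexe $z^n(-,\bullet)$ est formé de faisceaux plats, et $\Z_l$ est plat sur $\Z$; on a donc une suite d'isomorphismes
\[(\Z_X(n)_\et\oo^L\Z_l)\oo^L\Z/l^r\simeq \Z_X(n)_\et\oo^L\Z/l^r\simeq (\Z/l^r)_X(n)_\et,\]
la dernière égalité résultant de la définition \ref{d1} et de la platitude. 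Le théorème \ref{t1} a) (pour $l\ne p$) ou b) (pour $l=p$) identifie alors ce complexe, de manière canonique, à $\mu_{l^r}^{\otimes n}$, respectivement $\nu_r(n)[-n]$.

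Du côté droit, l'isomorphisme \eqref{eq3.4} identifie $\Z_l(n)_X^c\oo^L\Z/l^r$ au même faisceau $\mu_{l^r}^{\otimes n}$, resp. $\nu_r(n)[-n]$. Il resterait à voir que, sous ces deux identifications, la flèche \eqref{eq0l}$\otimes \Z/l^r$ coïncide avec le quasi-isomorphisme canonique de Geisser-Levine. C'est le point où réside la seule difficulté, et je m'attends à ce qu'il soit de nature essentiellement tautologique: la flèche \eqref{eq0l} a été fabriquée précisément à partir des constructions de Geisser-Levine aux crans finis (\cite{gl} pour $l\ne p$, \cite[dém. du th. 8.3]{gl2} pour $l=p$), de sorte que sa réduction modulo $l^r$ est, par construction, ce quasi-isomorphisme. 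La vérification consiste donc à dérouler la définition de $\Z_l(n)_X^c$ comme $R\lim$ et celle du morphisme \eqref{eq0l}, puis à contrôler la compatibilité du passage à $\oo^L\Z/l^r$ avec ce $R\lim$.

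Une fois ceci acquis, \eqref{eq0l}$\otimes \Z/l^r$ est un isomorphisme pour tout $r\ge 1$, donc le cône $K_X(n)_\et$ vérifie $K_X(n)_\et\oo^L\Z/l^r=0$. En prenant $r=1$ et en considérant la suite exacte longue de faisceaux de cohomologie associée au triangle de Bockstein
\[K_X(n)_\et\by{l}K_X(n)_\et\to K_X(n)_\et\oo^L\Z/l\by{+1},\]
on voit que la multiplication par $l$ induit un isomorphisme sur chaque $\sH^i(K_X(n)_\et)$. Ces faisceaux sont donc uniquement $l$-divisibles, ce qui établit la seconde assertion.
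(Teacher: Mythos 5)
Votre démonstration est correcte et suit exactement la même voie que celle du texte, qui se réduit à la phrase « En vertu du théorème \ref{t1}, \eqref{eq3.4} implique immédiatement » : vous ne faites qu'expliciter la platitude permettant d'identifier le membre de gauche modulo $l^r$ à $(\Z/l^r)_X(n)_\et$, la compatibilité tautologique avec la construction de Geisser--Levine, et le triangle de Bockstein donnant la divisibilité unique des faisceaux de cohomologie du cône. Rien à redire.
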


\begin{cor}\label{c1a} Pour tout $(X,i,n)$, le noyau de
\eqref{eq1} est $l$-divisible et son conoyau est sans $l$-torsion.
\end{cor}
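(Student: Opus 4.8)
The plan is to deduce the statement immediately from the distinguished triangle defining $K_X(n)_\et$ together with Proposition~\ref{ph1}. By Definition~\ref{d3.1}, the morphism \eqref{eq0l} sits in a distinguished triangle of complexes of \'etale sheaves
\[\Z_X(n)_\et\oo^L\Z_l\to \Z_l(n)_X^c\to K_X(n)_\et\by{+1},\]
and \eqref{eq1} is \emph{by construction} the map induced on \'etale hypercohomology by \eqref{eq0l} (the source $H^i_\et(X,\Z(n))\otimes\Z_l$ being $H^i_\et(X,\Z_X(n)_\et\oo^L\Z_l)$). Applying $R\Gamma_\et(X,-)$ to the triangle thus gives a long exact sequence
\begin{multline*}
\cdots\to H^{i-1}_\et(X,K_X(n)_\et)\by{\partial} H^i_\et(X,\Z(n))\otimes\Z_l\\
\to H^i_\cont(X,\Z_l(n))\by{\beta} H^i_\et(X,K_X(n)_\et)\to\cdots
\end{multline*}
in which the middle arrow is \eqref{eq1}. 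Hence the kernel of \eqref{eq1} equals $\IM(\partial)$, a quotient of $H^{i-1}_\et(X,K_X(n)_\et)$, while its cokernel is isomorphic to $\IM(\beta)$, a subgroup of $H^i_\et(X,K_X(n)_\et)$.

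The crux is then to show that every hypercohomology group $H^j_\et(X,K_X(n)_\et)$ is uniquely $l$-divisible. Proposition~\ref{ph1} tells us that the cohomology sheaves $\sH^j(K_X(n)_\et)$ are uniquely $l$-divisible, i.e.\ multiplication by $l$ is an automorphism of each of them. Consequently multiplication by $l$ induces an isomorphism on all the cohomology sheaves of $K_X(n)_\et$, so it is a quasi-isomorphism of this (a priori unbounded) complex; working with a K-injective resolution as in \S\ref{nb}, we conclude that $l$ acts invertibly on $R\Gamma_\et(X,K_X(n)_\et)$, hence on each group $H^j_\et(X,K_X(n)_\et)$. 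These groups are therefore uniquely $l$-divisible.

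Putting the two steps together finishes the proof: the kernel of \eqref{eq1}, being a quotient of the $l$-divisible group $H^{i-1}_\et(X,K_X(n)_\et)$, is $l$-divisible; and the cokernel of \eqref{eq1}, being a subgroup of $H^i_\et(X,K_X(n)_\et)$, on which multiplication by $l$ is injective, has no $l$-torsion. I do not anticipate a real obstacle here: the only delicate point is the passage from uniquely $l$-divisible cohomology sheaves to uniquely $l$-divisible hypercohomology, and this is harmless provided one handles the unbounded complex $K_X(n)_\et$ via K-injective resolutions, exactly as prepared in \S\ref{nb}.
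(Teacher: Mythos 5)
Your argument is exactly the paper's: the long exact hypercohomology sequence of the triangle $\Z_X(n)_\et\oo^L\Z_l\to \Z_l(n)_X^c\to K_X(n)_\et$, whose extreme terms $H^{*}_\et(X,K_X(n)_\et)$ are uniquely ($l$-)divisible by Proposition~\ref{ph1}. The paper states this in one line; you merely make explicit the (correct) observation that multiplication by $l$ is a quasi-isomorphism of $K_X(n)_\et$ and hence acts invertibly on its hypercohomology.
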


\begin{proof} On a une suite exacte
\[
H^{i-1}_\et(X,K(n))\to H^i_\et(X,\Z(n))\otimes \Z_l\to H^i_\cont(X,\Z_l(n))\to
H^i_\et(X,K(n))
\]
(o\`u $H^*_\et(X,K(n)):=\H^*_\et(X,K_X(n)_\et))$, dont les termes extr\^emes sont uniquement
divisibles.
\end{proof}

\subsection{D\'emonstration du th\'eor\`eme \ref{p2et}: premi\`ere partie}\label{3.2} On va
d\'emontrer:

\begin{prop} \label{p2.1} Soit $C$ le conoyau de \eqref{eqjan}.
On a  une surjection
\begin{equation}\label{eq2.4}
H^0(X,\sH^3_\et(\Q_l/\Z_l(2)))\Surj C_\tors
\end{equation}
de noyau divisible.
\end{prop}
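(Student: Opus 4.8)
The plan is to build the surjection $g$ by combining the étale cycle class \eqref{eqet} with the exact sequence \eqref{eqlk} of Proposition \ref{p1}. Recall that $C$ is the cokernel of \eqref{eqjan}, which by the remark following Definition \ref{d3.1} factors as the composite of $CH^2(X)\otimes\Z_l\to H^4_\et(X,\Z(2))\otimes\Z_l$ with the étale cycle class \eqref{eqet}, i.e. the case $(i,n)=(4,2)$ of \eqref{eq1}. First I would tensor the short exact sequence \eqref{eqlk} by $\Z_l$ (which is flat, so it remains left-exact and in fact exact since $H^0(X,\sH^3_\et(\Q/\Z(2)))$ is the quotient), obtaining
\[
CH^2(X)\otimes\Z_l\to H^4_\et(X,\Z(2))\otimes\Z_l\to H^0(X,\sH^3_\et(\Q/\Z(2)))\otimes\Z_l\to 0,
\]
and then compare it via the cycle classes with the target $H^4_\cont(X,\Z_l(2))$.

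**Extracting the torsion cokernel.** The key structural input is Corollary \ref{c1a} applied to $(i,n)=(4,2)$: the étale cycle class \eqref{eqet} has $l$-divisible kernel and torsion-free cokernel. I would assemble a commutative diagram whose top row is the tensored \eqref{eqlk} and whose relevant vertical maps are the cycle classes, then chase it. The cokernel $C$ of Jannsen's class is, by construction, the cokernel of \eqref{eqet} precomposed with the surjection $CH^2(X)\otimes\Z_l\to H^4_\et(X,\Z(2))\otimes\Z_l$; since \eqref{eqet} has torsion-free cokernel, the torsion subgroup $C_\tors$ should be identifiable with a subquotient coming entirely from the third term $H^0(X,\sH^3_\et(\Q/\Z(2)))\otimes\Z_l$. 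The map $g$ of \eqref{eq2.4} would then be the induced arrow from the $l$-primary part $H^0(X,\sH^3_\et(\Q_l/\Z_l(2)))$—noting that $\otimes\Z_l$ picks out the $l$-primary component of the torsion group $H^0(X,\sH^3_\et(\Q/\Z(2)))$—onto $C_\tors$.

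**Identifying kernel and surjectivity.** To see that $g$ is surjective with divisible kernel, I would argue that the image of $H^4_\et(X,\Z(2))\otimes\Z_l$ in $H^4_\cont(X,\Z_l(2))$ agrees, modulo the image of $CH^2(X)\otimes\Z_l$, with a torsion group, because the torsion-free part of the cokernel of \eqref{eqet} is absorbed. Concretely, the snake lemma applied to the comparison of \eqref{eqlk}$\otimes\Z_l$ with its image under the cycle classes yields a long exact sequence relating $\Ker$ and $\Coker$ of the three vertical maps. The outer cokernels being torsion-free (Corollary \ref{c1a}) forces $C_\tors$ to be a quotient of $H^0(X,\sH^3_\et(\Q_l/\Z_l(2)))$, giving surjectivity of $g$; the kernel of $g$ is then controlled by the divisible kernel of \eqref{eqet} together with the divisible part of $H^0(X,\sH^3_\et(\Q/\Z(2)))\otimes\Z_l$, so it is divisible.

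**The main obstacle.** I expect the delicate point to be the precise bookkeeping of the torsion versus divisible parts when tensoring with $\Z_l$ and restricting to the $l$-primary component. One must check carefully that tensoring \eqref{eqlk} with $\Z_l$ does not introduce spurious $\mathrm{Tor}$ terms (it does not, since $\Z_l$ is flat), and, more subtly, that the cokernel $C$ and its torsion subgroup $C_\tors$ really do coincide with the group predicted by the diagram chase rather than with some larger subquotient involving $CH^2(X)$. The interplay between ``$\otimes\Q/\Z$'' in the final Theorem \ref{p2et} and ``$\otimes\Z_l$ then torsion'' here—i.e. reconciling the divisible kernel found in this Proposition \ref{p2.1} with the exact description of $f$ in the second half of the argument—is where the technical care is concentrated, but at the level of this proposition the divisibility of the kernel follows formally from Corollary \ref{c1a}.
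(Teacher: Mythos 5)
Your proposal is correct and follows essentially the same route as the paper: factor Jannsen's class through the \'etale class \eqref{eqet}, compare the resulting four-term exact sequences via \eqref{eqlk} (whose cokernel term is $H^0(X,\sH^3_\et(\Q_l/\Z_l(2)))$ after tensoring with $\Z_l$), and conclude from Corollaire \ref{c1a} that the torsion-free cokernel of \eqref{eqet} forces the image in $C$ to be exactly $C_\tors$ while the divisible kernel of \eqref{eqet} gives the divisibility of $\Ker g$. This is precisely the diagram chase producing the five-term sequence \eqref{eq2a} in the paper's proof.
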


\begin{proof} Utilisons la suite exacte
\eqref{eqlk}: en chassant dans le diagramme commutatif de suites exactes
\[\begin{CD}
0&\to& K @>>> CH^2(X)\otimes \Z_l @>>> H^4_\cont(X,\Z_l(2))@>>> C&\to& 0\\
&& @VVV @VVV @V{=}VV @VVV\\
0&\to& K_\et @>>> H^4_\et(X,\Z(2))\otimes \Z_l @>>> H^4_\cont(X,\Z_l(2))@>>> C_\et&\to& 0\\
\end{CD}\]
(d\'efinissant $K,K_\et$, $C$ et $C_\et$), on en d\'eduit une suite exacte
\begin{equation}\label{eq2a}
0\to K\to K_\et\to H^0(X,\sH^3_\et(\Q_l/\Z_l(2)))\to C\to C_\et\to 0.
\end{equation}
On conclut en utilisant le corollaire \ref{c1a}.
\end{proof}

\subsection{Suites spectrales de coniveau} Soit $C$ un complexe de faisceaux Zariski sur $X$.
Par une technique bien connue remontant \`a Grothendieck et Hartshorne (cf. \cite[1.1]{cthk})
on obtient une suite spectrale
\[E_1^{p,q}=\bigoplus_{x\in X^{(p)}} H^{p+q}_x(X,C)\Rightarrow H^{p+q}(X,C).\]

Cette suite spectrale est clairement naturelle en $C\in D(X_\Zar)$. On notera de mani\`ere
suggestive:
\[E_2^{p,q}=A^p(X,H^q(C))\]
de sorte qu'on a des morphismes ``edge"
\begin{equation}\label{eq3.1}
H^n(X,C)\to A^0(X,H^n(C)).
\end{equation}

Lorsque $C$ v\'erifie la conjecture de Gersten, on a des isomorphismes canoniques
\[A^p(X,H^q(C))\simeq H^p(X,\sH^q(C)).\]

D'apr\`es  \cite[cor. 5.1.11]{cthk} c'est le cas pour $C=R\alpha_*\Z_l(n)^c_X$. En effet, pour
$l\ne p$, la th\'eorie cohomologique \`a supports correspondante  v\'erifie les axiomes {\bf
COH1} (excision \'etale, on dit maintenant Nisnevich) et {\bf COH3} (invariance par homotopie)
de \cite{cthk}; pour $l=p$, elle v\'erifie {\bf COH1} et {\bf COH5}. Ce dernier axiome est la
``formule du fibr\'e projectif": il r\'esulte de \cite{gros}. Si $k$ est un corps fini, il faut
adjoindre \`a ces axiomes l'axiome {\bf COH6} de \cite[p. 64]{cthk} (existence de transferts
pour les extensions finies): il est standard.

C'est \'egalement le cas pour $C=\Z_X(n)$, cf. preuve du lemme \ref{l2.1}. Par contre ce n'est
pas clair pour $C=K_X(n)$: en effet, la r\`egle $(X,Y)\mapsto H^*_Y(X,K(n))$ ne d\'efinit pas
une th\'eorie cohomologique \`a supports sans un choix coh\'erent des c\^ones $K_X(n)$. Plus
pr\'ecis\'ement, cette r\`egle n'est pas a priori fonctorielle en $(X,Y)$ pour les morphismes
quelconques de paires.  On ne peut donc pas lui appliquer la th\'eorie de Bloch-Ogus--Gabber
d\'evelopp\'ee dans \cite{cthk}. La consid\'eration des suites spectrales de coniveau va nous
permettre de contourner ce probl\`eme. 

\subsection{Un encadrement de la cohomologie non ramifi\'ee}\label{enc} L'identification du
noyau de
\eqref{eq2.4} est plus d\'elicate.  \`A titre pr\'eparatoire, on va pousser l'analyse
du num\'ero \ref{ladique2} un peu plus loin en faisant intervenir la conjecture de Bloch-Kato
en degr\'e
$n$. 

Par l'axiome de l'octa\`edre (et la conjecture de Bloch-Kato, cf. \eqref{eqbl}), on a un diagramme commutatif de triangles distingu\'es dans
$D(X_\Zar)$, o\`u $K_X(n)$ et $K_X(n)_\et$ ont \'et\'e introduits dans la d\'efinition
\ref{d3.1}
\[\begin{CD}
\Z_X(n)\oo^L\limits \Z_l@>>> R\alpha_*\Z_l(n)_X^c@>>> K_X(n)\\
@VVV @V{||}VV @V{f}VV\\
R\alpha_*\Z_X(n)_\et\oo^L\limits \Z_l@>>> R\alpha_*\Z_l(n)_X^c@>>> R\alpha_* K_X(n)_\et\\
@VVV @VVV @VVV\\
\tau_{\ge n+2} R\alpha_*\Z_X(n)_\et\oo^L\limits \Z_l @>>> 0 @>>> C
\end{CD}\]
et o\`u $C$ est par d\'efinition ``le" c\^one de $f$. On a donc un zig-zag d'isomorphismes 
\[C\iso \tau_{\ge n+2} R\alpha_*\Z_X(n)_\et\oo^L \Z_l[1]\osi \tau_{\ge n+1}
R\alpha_*(\Q_l/\Z_l)(n)_\et\] 
o\`u l'isomorphisme de gauche provient du diagramme ci-dessus, et celui de droite provient du
corollaire \ref{c2.1}. D'o\`u un triangle exact
\[K_X(n)\to R\alpha_* K_X(n)_\et \to \tau_{\ge n+1}
R\alpha_*(\Q_l/\Z_l)(n)_\et\by{+1}.\]

Comme le deuxi\`eme terme est uniquement divisible (proposition \ref{ph1}) et que le troisi\`eme
est de torsion, cela montre que
\[K_X(n)\otimes \Q\iso R\alpha_*K_X(n)_\et.\]

On en d\'eduit:

\begin{lemme}\label{l2.4} Soit $l\ne p$. Sous la conjecture de Bloch-Kato en degr\'e $n$, le
groupe
$H^i(X,K(n))$ est uniquement divisible pour $i\le n$ et on a une suite exacte courte
\begin{multline}\label{eq2.1}
0\to H^{n+1}(X,K(n))\otimes \Q/\Z\to H^0(X,\sH^{n+1}(\Q_l/\Z_l(n)))\\
\to H^{n+2}(X,K(n))_\tors\to 0.
\end{multline}
Le m\^eme \'enonc\'e vaut pour $l=p$, en utilisant le th\'eor\`eme \ref{tgl}.\qed
\end{lemme}

Le point est maintenant d'identifier les termes extr\^emes de \eqref{eq2.1} \`a des groupes plus
concrets: nous n'y parvenons que pour $n=2$ au \S \ref{s3.2}. Mais pour $n$ quelconque, 
notons la suite exacte
\begin{multline}\label{eq2.2}
0\to \Coker\left(H^{n+2}(X,\Z(n))\otimes \Z_l\to H^{n+2}_\cont(X,\Z_l(n))\right)\\\to
H^{n+2}(X,K(n)) \to H^{n+3}(X,\Z(n))\otimes \Z_l
\end{multline}
et les homomorphismes \'evidents:
\begin{equation}\label{eq2.3}
H^{n+1}(X,K(n))\overset{\alpha}{\to} A^0(X,H^{n+1}(K(n)))\overset{\beta}{\leftarrow}
A^0(X,H^{n+1}_\cont(\Z_l(n)))
\end{equation}
o\`u $\alpha$ est l'homomorphisme \eqref{eq3.1}. 
La suite exacte \eqref{eq2.2} en induit une sur les sous-groupes de torsion. Pour $n=2$, le dernier terme est nul: on retrouve ainsi la proposition \ref{p2.1}. Pour $n>2$, la premi\`ere fl\`eche de \eqref{eq2.2} n'a plus de raison d'\^etre surjective sur la torsion.  Notons tout de m\^eme que pour $n=3$, le dernier terme de \eqref{eq2.2} n'est autre que $CH^3(X)\otimes \Z_l$ (comparer \`a la remarque \ref{r2.1}).

D'autre part:

\begin{lemme}\label{l2.2} L'homomorphisme $\beta$ de \eqref{eq2.3} est un isomorphisme pour
tout $n\ge 0$.
\end{lemme}

\begin{proof} Dans le diagramme commutatif
\[\begin{CD}
\displaystyle \bigoplus_{x\in X^{(0)}} H^{n+1}_x(X,\Z_l(n)) @>>> \displaystyle\bigoplus_{x\in X^{(1)}} H^{n+2}_x(X,\Z_l(n))\\
 @VVV @VVV\\
\displaystyle \bigoplus_{x\in X^{(0)}} H^{n+1}_x(X,K(n)) @>>> \displaystyle\bigoplus_{x\in X^{(1)}} H^{n+2}_x(X,K(n))
 \end{CD}\]
les deux fl\`eches verticales sont des isomorphismes. En effet, elle s'ins\`erent dans des
suites exactes du type
\[\begin{CD}
H^{n+1}_x(X,\Z(n))\otimes \Z_l &\to& H^{n+1}_x(X,\Z_l(n))&\to& H^{n+1}_x(X,K(n)) &\to& H^{n+2}_x(X,\Z(n))\otimes \Z_l \\
H^{n+2}_x(X,\Z(n))\otimes \Z_l &\to& H^{n+2}_x(X,\Z_l(n))&\to& H^{n+2}_x(X,K(n)) &\to& H^{n+3}_x(X,\Z(n))\otimes \Z_l 
\end{CD}\]
o\`u dans la premi\`ere suite, $x$ est de codimension $0$ et dans la seconde suite, $x$ est de
codimension $1$. Sans perte de g\'en\'eralit\'e, on peut supposer $X$ connexe et alors, pour
son point g\'en\'erique $\eta$ (cf. th\'eor\`eme \ref{tloc} a)):
\[H^i_\eta(X,\Z(n)):=\colim_U H^i(U,\Z(n))\iso  H^i(k(X),\Z(n))=0 \text{ pour } i>n.\]

Pour $x$ de codimension $1$, on a
\[H^i_x(X,\Z(n)) := \colim_{U\ni x} H^i_{Z_U}(U,\Z(n))\]
o\`u $Z_U=\overline{\{x\}}\cap U$. Gr\^ace au th\'eor\`eme \ref{tloc} b) et a), cette limite
devient
\[\colim_{U\ni x} H^{i-2}(Z_U,\Z(n-1))= H^{i-2}(k(x),\Z(n-1)) = 0 \text{ pour } i-2>n-1.\]

Le lemme en d\'ecoule.
\end{proof}

\subsection{Cohomologie \`a supports de $K(n)$} On aura aussi besoin des deux lemmes suivants
au prochain num\'ero:

\begin{lemme}\label{l2.3} Soit $l\ne\car k$.\\
Soit $Y\subset X$ un couple lisse de codimension $d$. Alors il existe des isomorphismes
\[H^{i-2d}_\Zar(Y,K(n-d))\iso H^i_Y(X,K(n)) \quad (n\ge 0, i\in \Z)\]
contravariants pour les immersions ouvertes $U\inj X$.
\end{lemme}

Attention: ce lemme affirme l'existence d'isomorphismes de puret\'e, mais ne dit rien sur leur
caract\`ere canonique ou fonctoriel au-del\`a de la contravariance \'enonc\'ee. (On peut faire
en sorte qu'ils soient contravariants pour les morphismes quelconques entre vari\'et\'es
lisses, mais c'est plus technique et inutile ici.)

\begin{proof} Notons $i$ l'immersion ferm\'ee $Y\to X$. On remarque que le diagramme de
$D(Y_\Zar)$
\[\begin{CD}
\Z(n-d)_Y\otimes \Z_l[-2d] @>\cl^{n-d}_Y>> R\alpha_*\Z_l(n-d)^c_Y[-2d]\\
@V{f}VV @V{g}VV\\
Ri^!_\Zar\Z(n)_X\otimes \Z_l @>\cl^n_X>> R\alpha_*Ri^!_\et\Z_l(n)^c_X
\end{CD}\]
o\`u $f$ est induit par le th\'eor\`eme \ref{tloc} b) et $g$ est donn\'e par le th\'eor\`eme
de puret\'e en cohomologie \'etale, est commutatif: cela r\'esulte tautologiquement de la
construction des classes de cycles motiviques dans \cite{gl}. Par cons\'equent, ce diagramme
s'\'etend en un diagramme commutatif de triangles exacts
\[\begin{CD}
\Z(n-d)_Y\otimes \Z_l[-2d] @>\cl^{n-d}_Y>> R\alpha_*\Z_l(n-d)^c_Y[-2d]&\to& K(n-d)_Y[-2d]&\by{+1}\\
@V{f}VV @V{g}VV @V{h}VV\\
Ri^!_\Zar\Z(n)_X\otimes \Z_l @>\cl^n_X>> R\alpha_*Ri^!_\et\Z_l(n)^c_X&\to& Ri^!_\Zar K(n)_X&\by{+1}
\end{CD}\]

(Rien n'est dit sur un choix privil\'egi\'e de $h$.) Comme $f$ et $g$ sont des
quasi-isomorphismes, $h$ en est un aussi, d'o\`u l'\'enonc\'e.

Comme $h$ est un morphisme dans la cat\'egorie d\'eriv\'ee des faisceaux Zariski sur $Y$, la contravariance annonc\'ee est tautologique pour $U\inj X$ tel que $U\cap Y\ne \emptyset$, et elle est sans contenu lorsque $U\cap Y =\emptyset$.
\end{proof}

\begin{lemme}\label{l2.3p} Soit $l=p=\car k$.\\
Soit $Y\subset X$ un couple lisse de codimension $d$. Alors il existe des homomorphismes
\[H^{i-2d}(Y,K(n-d))\by{h^i} H^i_Y(X,K(n)) \quad (n\ge 0, i\in \Z)\]
contravariants pour les immersions ouvertes $U\inj X$. Ce sont des isomorphismes pour $i\le
n+d$.
\end{lemme}

\begin{proof}  On raisonne comme dans la d\'emonstration du lemme \ref{l2.3}, en
utilisant cette fois le th\'eor\`eme \ref{t1} b) et les r\'esultats de Gros
\cite{gros}. D'apr\`es \cite[(3.5.3) et th. 3.5.8]{gros}, on a
\begin{equation}\label{eq3.3}
 R^qi^! \nu_r(n) =
\begin{cases}
0 & \text{si $q\ne d,d+1$}\\
\nu_r(n-d) &\text{si $q=d$.}
\end{cases}
\end{equation}

La formule \eqref{eq3.3} et sa compatibilit\'e aux classes de cycles motiviques (elle est \`a
la base de leur construction) fournit un diagramme commutatif dans $D(Y_\Zar)$
\begin{equation}\label{eq3.3a}
\begin{CD}
\Z(n-d)_Y\otimes \Z_p@>>> R\alpha_* \Z_p(n-d)^c[-2d]\\
@V{f}VV @V{g}VV\\
Ri^!_\Zar \Z(n)_X\otimes \Z_p@>>> R\alpha_*Ri^!_\et \Z_p(n)^c.
\end{CD}
\end{equation}

Il en r\'esulte un morphisme
\[K(n-d)_Y\by{h} Ri^!_\Zar K(n)_X\]
compl\'etant le carr\'e ci-dessus en un diagramme commutatif de triangles exacts. Ceci fournit les homomorphismes $h^i$ du lemme. 

Dans le diagramme \eqref{eq3.3a}, $f$ est un isomorphisme (th\'eor\`eme \ref{tloc} b)).  Par \eqref{eq3.3}, le c\^one de  $g$ est acyclique en degr\'es $\le n+d$. Par cons\'equent, le c\^one de $h$ est acyclique en degr\'es $\le n+d$, ce qui donne la bijectivit\'e de $h^i$ pour $i\le n+d$.
\end{proof}

\subsection{Fin de la d\'emonstration du th\'eor\`eme \ref{p2et}}\label{s3.2}
 
On prend maintenant $n=2$. Le r\'esultat principal est:

\begin{prop}\label{p2.2} Pour $n=2$, l'homomorphisme $\alpha$ de \eqref{eq2.3} est surjectif de
noyau $A^1(X,H^2(K(2)))$, uniquement divisible.
\end{prop}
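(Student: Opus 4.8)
The plan is to run the coniveau spectral sequence of the preceding subsection on the single complex $C=K_X(2)\in D(X_\Zar)$. Its abutment is $H^{p+q}(X,K(2))$, and the edge homomorphism in total degree $3$ towards $E_2^{0,3}=A^0(X,H^3(K(2)))$ is by construction the map $\alpha$ of \eqref{eq2.3}. As was anticipated when the spectral sequence was introduced, the fact that $(X,Y)\mapsto H^*_Y(X,K(2))$ is not a cohomology theory with supports is harmless here: for a fixed $X$ the spectral sequence exists unconditionally, and its $E_1$-terms can be evaluated through the purity isomorphisms of lemmes \ref{l2.3} and \ref{l2.3p}, whose contravariance for open immersions is exactly the functoriality entering the coniveau complex.

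First I would identify the $E_1$-page. For $x\in X^{(p)}$ purity gives $H^{p+q}_x(X,K(2))\iso H^{q-p}(k(x),K(2-p))$, whence
\[E_1^{p,q}=\bigoplus_{x\in X^{(p)}}H^{q-p}(k(x),K(2-p)).\]
(For $l=\car k$ lemme \ref{l2.3p} furnishes this isomorphism only in the range $p+q\le p+2$, i.e. $q\le 2$, but that covers every term I shall use.) Two bookkeeping observations then make the relevant entries vanish. On one hand $K(m)\simeq \Z_l(m)^c$ for $m<0$, since $\Z(m)=0$, so $H^j(k(x),K(m))=H^j_\cont(k(x),\Z_l(m))=0$ whenever $j<0$; this kills $E_1^{3,0}$, $E_1^{3,1}$, $E_1^{4,0}$ and every analogous term in higher columns. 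On the other hand $H^j(k(x),K(0))=0$ for $j\le 0$, the case $j=0$ holding because the cycle class $\Z\oo\Z_l\to\Z_l$ is an isomorphism in degree $0$; this kills $E_1^{2,1}$ and $E_1^{2,2}$. Consequently $E_2^{p,q}=0$ for $(p,q)\in\{(2,1),(2,2),(3,0),(3,1),(4,0)\}$.

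Next I would read off the two assertions. Every differential leaving $E_r^{0,3}$ lands in a subquotient of one of $E_2^{2,2}$, $E_2^{3,1}$, $E_2^{4,0}$ (and, for $r\ge5$, of terms carrying a negative twist in negative degree), all of which vanish; hence $E_\infty^{0,3}=E_2^{0,3}$ and $\alpha$ is surjective. For the kernel one has $\Ker\alpha=F^1H^3(X,K(2))$ for the coniveau filtration $F^\bullet$; since $E_\infty^{2,1}$ and $E_\infty^{3,0}$ are subquotients of $E_2^{2,1}=E_2^{3,0}=0$, the filtration collapses to $F^1=E_\infty^{1,2}$. No differential enters $E^{1,2}$ (its source would sit in a negative column) and every differential leaving it again meets a vanishing group, so $E_\infty^{1,2}=E_2^{1,2}=A^1(X,H^2(K(2)))$, which identifies $\Ker\alpha$.

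Finally, unique divisibility comes for free: $E_2^{1,2}$ is a subquotient of $E_1^{1,2}=\bigoplus_{x\in X^{(1)}}H^1(k(x),K(1))$, and each $H^1(k(x),K(1))$ is uniquely divisible by lemme \ref{l2.4} in weight $1$, where the only input is Hilbert 90. A subquotient of a uniquely divisible group is uniquely divisible, which finishes the argument. The one genuinely delicate point is the $E_1$-computation: I must be content with knowing the groups $E_1^{p,q}$ — which the purity isomorphisms, functorial only for open immersions, do deliver — rather than a canonical description of the $d_1$-differentials; fortunately only the vanishing of certain $E_1$-groups and the subquotient structure of $E_2^{1,2}$ enter, so this limited functoriality suffices.
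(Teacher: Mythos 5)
Your argument is essentially the paper's own proof: the coniveau spectral sequence for $K_X(2)$, the purity lemmas \ref{l2.3} and \ref{l2.3p} to identify the $E_1$-terms, the vanishing of exactly the entries $(2,1)$, $(2,2)$, $(3,0)$, $(3,1)$, $(4,0)$ (the paper records these as $E_1^{a,b}=0$ for $a>2$ and $b<a$, resp.\ $b<2$ when $l=p$, and for $a=2$, $b\le 2$, with the same justification that $K(m)$ reduces to $R\alpha_*\Z_l(m)^c$ for $m<0$ and that $H^0(k(x),K(0))=0$), then the resulting short exact sequence and lemme \ref{l2.4}. Your handling of the non-functoriality of the cone and of the restricted range of lemme \ref{l2.3p} is also the intended one.

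There is, however, one incorrect step at the very end: \emph{a subquotient of a uniquely divisible group need not be uniquely divisible} ($\Z\subset\Q$ and $\Q/\Z$ are counterexamples), so knowing only that $E_1^{1,2}$ is uniquely divisible does not suffice. What you need --- and what the paper actually invokes --- is that $E_2^{1,2}$ is the homology of the complex $E_1^{0,2}\to E_1^{1,2}\to E_1^{2,2}$ \emph{all of whose terms} are uniquely divisible: $E_1^{0,2}=\bigoplus_{x\in X^{(0)}}H^2(k(x),K(2))$ is uniquely divisible by the same lemme \ref{l2.4} applied with $i=n=2$ (i.e.\ Merkurjev--Suslin), and $E_1^{2,2}=0$. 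Then the kernel of $E_1^{1,2}\to E_1^{2,2}$ is uniquely divisible (kernels of homomorphisms between uniquely divisible groups are), the image of $E_1^{0,2}$ in it is a divisible torsion-free subgroup, and the quotient of the one by the other is uniquely divisible. Without the divisibility of the leftmost term the quotient could acquire torsion, exactly as $\Q_l/\Z_l$ does; so the slip is genuine, though repaired by one more application of a lemma you already cite.
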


\begin{proof}  Notons $E_2^{a,b}=A^a(X,H^b(K(n))$: c'est la cohomologie d'un certain complexe de Cousin.
 
En utilisant les lemmes \ref{l2.3} et \ref{l2.3p}, on trouve que $E_1^{a,b} = 0$ pour 
\begin{description}
\item[$l\ne p$] $a>2$ et $a+b<2a$; $a=2$ et $a+b\le 4$.
\item[$l=p$] $a>2$ et $a+b<2+a$; $a=2$ et $a+b\le 4$. 
\end{description}

(En particulier, $E_2^{2,2} = 0$ puisque $K(0)=0$!) On en d\'eduit une suite exacte
\begin{equation}\label{eq3.10}
0\to A^1(X,H^2(K(2)))\to H^3(X,K(2))\\
\to A^0(X,H^3(K(2)))\to 0.
\end{equation}

Mais $A^1(X,H^2(K(2)))$ est l'homologie du complexe
\[E_1^{0,2} \to E_1^{1,2}\to E_1^{2,2}\]
dont tous les termes sont encore dans le domaine d'application des lemmes \ref{l2.3} et
\ref{l2.3p} (isomorphismes de puret\'e). D'apr\`es le lemme \ref{l2.4}, ils sont uniquement
divisibles, ainsi donc que $E_2^{1,2}$. 
\end{proof}

Le th\'eor\`eme \ref{p2et} r\'esulte maintenant de la proposition \ref{p2.1}, du lemme
\ref{l2.4}, du lemme \ref{l2.2} et de la proposition \ref{p2.2}.\qed

\subsection{Un compl\'ement} Notons pour conclure:

\begin{lemme}\label{l3.4} Le $\Z_l$-module $H^0(X,\sH^3_\cont(\Z_l(2)))$ est sans torsion.
\end{lemme}

\begin{proof} On fait comme dans \cite[th. 3.1]{ct-v} (cet argument remonte \`a Bloch-Srinivas
\cite{b-sri} pour la cohomologie de Betti): le th\'eor\`eme de Merkurjev-Suslin implique que le
faisceau $\sH^3_\cont(\Z_l(2))$ est sans torsion. (Pour $l=p$, utiliser \cite{gl2}.)
\end{proof}

\section{Cas d'un corps de base s\'eparablement clos} Soient $k$ un corps s\'eparablement
clos et $X$ une $k$-vari\'et\'e lisse. On veut pr\'eciser le th\'eor\`eme \ref{p2et} dans ce
cas, toujours dans l'esprit de Colliot-Th\'el\`ene--Voisin \cite{ct-v}.

\subsection{Lien avec les cycles de Tate} Le lemme suivant est d\'emontr\'e dans \cite{ctk}.
Il montre que les cycles de Tate entiers fournissent un bon analogue des cycles de Hodge
entiers consid\'er\'es dans \cite{ct-v}:

\begin{lemme}\label{l3.1} Soient $G$ un groupe profini et $M$ un $\Z_l$-module de type fini
muni d'une action continue de $G$.  Soit 
\[M^{(1)}=\bigcup_U M^U\]
o\`u $U$ d\'ecrit les sous-groupes ouverts de $G$. Alors $M/M^{(1)}$ est sans torsion.
\end{lemme}

On en d\'eduit:

\begin{lemme} Supposons  que $k$ soit la cl\^oture s\'eparable d'un corps de type fini et que
$l\ne \car k$. Alors le groupe fini
$C_\tors$ du th\'eor\`eme \ref{p2et} est aussi le sous-groupe de torsion de
\[\Coker\left(CH^2(X)\otimes\Z_l\to H^4_\cont(X,\Z_l(2))^{(1)}\right).\]
Sous la conjecture de Tate, ce conoyau est enti\`erement de torsion  (pour $X$ non
n\'ecessairement propre, cf. Jannsen \cite[p. 114, th. 7.10 a)]{jannsen2}).\qed
\end{lemme}

On peut d'ailleurs supprimer l'hypoth\`ese que $k$ soit la cl\^oture s\'eparable d'un corps de
type fini. En g\'en\'eral, \'ecrivons $k= \bigcup_\alpha k_\alpha$, o\`u $k_\alpha$ d\'ecrit
l'ensemble (ordonn\'e filtrant) des cl\^otures s\'eparables des sous-corps de type fini de $k$
sur lesquels $X$ est d\'efinie. Pour tout $\alpha$, notons $X_\alpha$ le $k_\alpha$-mod\`ele de
$X$ correspondant. Si $k_\alpha\subset k_\beta$, on a des isomorphismes
\[H^4_\cont(X_\alpha,\Z_l(2))\iso H^4_\cont(X_\beta,\Z_l(2))\iso H^4_\cont(X,\Z_l(2))\]
et on peut d\'efinir
\[H^4_\cont(X,\Z_l(2))^{(1)}:=\colim_\alpha H^4_\cont(X_\alpha,\Z_l(2))^{(1)}.\]

Il s'agit en fait d'une limite inductive d'isomorphismes puisque, si $X$ est d\'efinie sur
$k_\alpha^0\subset k_\alpha$ de type fini et de cl\^oture s\'eparable $k_\alpha$ et que
$k_\beta\supset k_\alpha$, l'homomorphisme $Gal(k_\beta/k_\beta k_\alpha^0)\to
Gal(k_\alpha/k_\alpha^0)$ est surjectif.

\subsection{Lien avec le groupe de Griffiths}

Si $k=\C$ et $X$ est projective, Colliot-Th\'el\`ene et
Voisin ont \'etabli un lien entre $H^0(X,\sH^3(\Q/\Z(2)))$ et le groupe de Griffiths dans
\cite[\S 4.2]{ct-v}. Reprenons cette id\'ee en l'amplifiant un peu.

Voici d'abord une d\'efinition de groupes de Griffiths et de groupes d'\'equivalence
homologique dans le contexte
$l$-adique. Supposons $k$ s\'e\-pa\-ra\-ble\-ment clos si $l\ne \car k$, et $k$
alg\'ebriquement clos si $l=\car k$. Par un argument bien connu de Bloch (\cite[lemma 7.10]{bo}, \cite[Lect. 1, lemma 1.3]{blect}),  pour toute
$k$-vari\'et\'e lisse $X$, le sous-groupe de $CH^n(X)$ form\'e des cycles alg\'ebriquement
\'equivalents \`a z\'ero est
$l$-divisible; les diagrammes commutatifs
\[\begin{CD}
CH^n(X)\otimes \Z_l@>\cl^n>> H^{2n}_\cont(X,\Z_l(n))\\
@VVV @VVV \\
CH^n(X)\otimes \Z/l^s@>\cl^n_s>> H^{2n}_\et(X,\Z/l^s(n))
\end{CD}\]
et l'isomorphisme
\begin{equation}\label{eq4.2}
H^{2n}_\cont(X,\Z_l(n))\iso  \lim_s H^{2n}_\et(X,\Z/l^s(n))
\end{equation}
montrent donc que $\cl^n_s$ et $\cl^n$ se factorisent \`a
travers l'\'equivalence al\-g\'e\-bri\-que. 

(Pr\'ecisons. L'isomorphisme \eqref{eq4.2} est valable  pourvu
que le syst\`eme projectif $(H^{2n-1}_\et(X,\Z/l^s(n)))_{s\ge 1}$ soit de Mittag-Leffler. Pour
$l\ne\car k$ c'est vrai parce que les termes sont finis; pour $l=\car
k$ et $X$ projective c'est expliqu\'e dans
\cite[p. 783]{ctss}, donc il faut a priori supposer $X$ projective dans ce cas.)  

Ceci donne un sens \`a:

\begin{defn}\label{d4.1} Soit $X$ une $k$-vari\'et\'e lisse o\`u $k$ est s\'eparablement
clos si $l\ne \car k$ et alg\'ebriquement clos si $l=\car k$; dans ce dernier cas, on
suppose aussi $X$ projective. Soit $A\in
\{\Z_l,\Q_l,\Z/l^n,\Q_l/\Z_l\}$. On note
\begin{align*}
\Griff^n(X,A) &= \Ker\left(A^n_\alg(X)\otimes A\by{\cl^n}
H^{2n}_\cont(X,A(n))\right)\\
A^n_\hom(X,A) &= \IM\left(A^n_\alg(X)\otimes A\by{\cl^n}
H^{2n}_\cont(X,A(n))\right).
\end{align*}
\end{defn}

\begin{rque}\label{r4.2} Si $k=\C$, on a $\Griff^n(X,\Z_l)=\Griff^n(X)\otimes \Z_l$ par
l'isomorphisme de comparaison entre cohomologies de Betti et $l$-adique, o\`u $\Griff^n(X)$ est
d\'efini \`a l'aide de la cohomologie de Betti.
\end{rque}

On a la version $l$-adique de \cite[th. 7.3]{bo}:

\begin{prop}\label{p3.1} Supposons $l\ne \car k$. Notons $A^n_\alg(X)$ le groupe des cycles de
codimension $n$ sur $X$ modulo l'\'equivalence alg\'ebrique. Dans la suite spectrale de coniveau
\[E_r^{p,q} \Rightarrow H^{p+q}(X,\Z_l(n))\]
 pour la cohomologie $l$-adique de $X$, on a un isomorphisme
\[A^n_\alg(X)\otimes \Z_l\iso E_2^{n,n}\]
induit par l'isomorphisme
\[Z^n(X)\otimes \Z_l\iso E_1^{n,n}\]
donn\'e par les isomorphismes de puret\'e. 
\end{prop}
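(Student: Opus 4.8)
The plan is to transposer \`a la cohomologie \'etale continue $l$-adique la d\'emonstration de \cite[th. 7.3]{bo}, les deux ingr\'edients g\'eom\'etriques \'etant la puret\'e en cohomologie \'etale et la $l$-divisibilit\'e, due \`a Bloch et rappel\'ee ci-dessus, du sous-groupe des cycles alg\'ebriquement \'equivalents \`a z\'ero. Comme pour $l\ne\car k$ la th\'eorie $(X,Z)\mapsto H^*_Z(X,\Z_l(n)^c)$ v\'erifie la conjecture de Gersten (axiomes {\bf COH1} et {\bf COH3}, cf. \cite{cthk}), les termes $E_1$ de la suite spectrale de coniveau se calculent par puret\'e: $E_1^{p,q}=\bigoplus_{x\in X^{(p)}}H^{q-p}(k(x),\Z_l(n-p))$. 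On obtient aussit\^ot $E_1^{n,n}=\bigoplus_{x\in X^{(n)}}H^0(k(x),\Z_l)=Z^n(X)\otimes\Z_l$ (c'est l'isomorphisme de puret\'e de l'\'enonc\'e) et $E_1^{n+1,n}=\bigoplus_{x\in X^{(n+1)}}H^{-1}(k(x),\Z_l(-1))=0$; la diff\'erentielle sortant de $(n,n)$ est donc nulle et $E_2^{n,n}=\Coker\bigl(d_1\colon E_1^{n-1,n}\to E_1^{n,n}\bigr)$. Via la th\'eorie de Kummer, $E_1^{n-1,n}=\bigoplus_{x\in X^{(n-1)}}H^1(k(x),\Z_l(1))$ avec $H^1(k(x),\Z_l(1))=\lim_s k(x)^*/l^s$, et $d_1$ s'identifie, sur le facteur attach\'e \`a $W=\overline{\{x\}}$, \`a l'application diviseur $l$-adiquement compl\'et\'ee $\widehat{\operatorname{div}}_W$ (le r\'esidu \'etant la valuation). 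Tout revient donc \`a montrer l'\'egalit\'e $\IM(\widehat{\operatorname{div}})=Z^n_\alg(X)\otimes\Z_l$, o\`u $Z^n_\alg$ d\'esigne les cycles alg\'ebriquement \'equivalents \`a z\'ero.

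Je traiterais d'abord le cas $n=1$, qui sert de brique de base. Par ce qui pr\'ec\`ede $E_2^{1,1}=H^1_\Zar(X,\sH^1(\Z_l(1)))$; comme $\sH^1(\Z_l(1))=\lim_s \sO_X^*/l^s$ (Kummer) et $H^2_\Zar(X,\sO_X^*)=0$ (r\'esolution flasque $\sO_X^*\to k(X)^*\to\bigoplus_{X^{(1)}}\Z$), on trouve $E_2^{1,1}=\lim_s \Pic(X)/l^s$. La $l$-divisibilit\'e de $\Pic^0(X)$ (cas $n=1$ de Bloch) et la finitude de $\NS(X)$ donnent alors $E_2^{1,1}=\NS(X)\otimes\Z_l=A^1_\alg(X)\otimes\Z_l$, c'est-\`a-dire $\IM(\widehat{\operatorname{div}}_X)=Z^1_\alg(X)\otimes\Z_l$ pour tout $X$ lisse.

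Pour passer \`a $n$ quelconque, j'appliquerais ce cas aux mod\`eles des sous-vari\'et\'es de codimension $n-1$. Fixons $W\subset X$ int\`egre de codimension $n-1$ et un mod\`ele lisse $\tilde W$ de $k(W)$ (via une alt\'eration de de Jong si $\car k>0$, ce qui n'introduit qu'un facteur de degr\'e absorb\'e par la $\Z_l$-lin\'earit\'e). Le cas $n=1$ appliqu\'e \`a $\tilde W$ identifie $\IM(\widehat{\operatorname{div}}_W)$ aux diviseurs de $\tilde W$ alg\'ebriquement \'equivalents \`a z\'ero, tensoris\'es par $\Z_l$; leur image directe propre dans $X$ est form\'ee de cycles alg\'ebriquement \'equivalents \`a z\'ero, d'o\`u $\IM(d_1)\subseteq Z^n_\alg(X)\otimes\Z_l$. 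R\'eciproquement $Z^n_\alg(X)$ est engendr\'e par les diff\'erences de fibres $Z_0-Z_1$ de familles $\Gamma\subset X\times C$ param\'etr\'ees par une courbe lisse $C$, de projection g\'en\'eriquement finie sur un $W$ de codimension $n-1$; en relevant une classe $g\in H^1(k(C),\Z_l(1))$ de r\'esidu $[c_0]-[c_1]$ (fournie par le cas $n=1$ sur $C$, o\`u les diviseurs de degr\'e $0$ sont exactement l'image de $\widehat{\operatorname{div}}_C$) en $\phi=\pi^*g\in H^1(k(\tilde W),\Z_l(1))$ le long de $\pi\colon\tilde W\to C$, on obtient $\widehat{\operatorname{div}}_W(\phi)=Z_0-Z_1$ \`a image directe et facteur de degr\'e pr\`es. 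D'o\`u l'inclusion oppos\'ee, puis $E_2^{n,n}=A^n_\alg(X)\otimes\Z_l$, induit par l'isomorphisme de puret\'e sur $E_1^{n,n}$.

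Le point d\'elicat est pr\'ecis\'ement l'identification de $\IM(d_1)$ avec l'\'equivalence \emph{alg\'ebrique} et non rationnelle: il faut contr\^oler l'\'ecart entre $k(W)^*\otimes\Z_l$ et sa compl\'etion $l$-adique $H^1(k(W),\Z_l(1))$, ainsi que les facteurs de degr\'e provenant du passage \`a un mod\`ele lisse en caract\'eristique positive. Ces deux difficult\'es se r\'esolvent par la $l$-divisibilit\'e de $Z^n_\alg$ (resp. de $\Pic^0$): c'est elle qui, d\'ej\`a dans le cas $n=1$, transforme $\lim_s\Pic(X)/l^s$ en $\NS(X)\otimes\Z_l$, et qui permet d'absorber les multiplicit\'es lors des r\'eductions. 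Une v\'erification modulo $l^s$ conforte le r\'esultat: la m\^eme analyse puret\'e/Gersten \`a coefficients $\mu_{l^s}^{\otimes n}$ donne $E_2^{n,n}(\Z/l^s)=\Coker(\widehat{\operatorname{div}}\bmod l^s)=CH^n(X)/l^s$, qui vaut $A^n_\alg(X)/l^s$ par Bloch.
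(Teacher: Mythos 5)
Votre strat\'egie est pour l'essentiel celle du texte: transposer la preuve de \cite[th. 7.3]{bo} \`a la cohomologie $l$-adique continue, calculer $E_1^{n,n}$ et $E_1^{n-1,n}$ par puret\'e et conjecture de Gersten, puis remplacer l'argument transcendant de Bloch--Ogus sur les diviseurs par la th\'eorie de Kummer jointe \`a la $l$-divisibilit\'e du noyau de $CH^1(Y)\to A^1_\alg(Y)$ (\cite[lemme 7.10]{bo}) et \`a la g\'en\'eration finie de $\NS$. Votre traitement du cas $n=1$ ($E_2^{1,1}=\lim_s\Pic(X)/l^s=\NS(X)\otimes\Z_l$) est correct et \'equivaut \`a celui du texte.

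Le point qui ne tient pas est le traitement du degr\'e des alt\'erations en caract\'eristique positive. Vous affirmez que le passage \`a un mod\`ele lisse $\tilde W$ de $k(W)$ par une alt\'eration de de Jong ``n'introduit qu'un facteur de degr\'e absorb\'e par la $\Z_l$-lin\'earit\'e'': c'est faux lorsque $l$ divise ce degr\'e $d$, car la multiplication par $d$ n'est alors pas inversible sur un $\Z_l$-module; concr\`etement, la compos\'ee restriction--corestriction entre $k(W)$ et $k(\tilde W)$ est la multiplication par $d$, et l'image directe depuis $\tilde W$ ne permet d'atteindre que $d\cdot(Z_0-Z_1)$, ce qui ne donne pas $Z_0-Z_1\in\IM(d_1)$ dans $Z^n(X)\otimes\Z_l$. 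L'argument de secours que vous invoquez --- la ``$l$-divisibilit\'e de $Z^n_\alg$'' --- n'est pas correct non plus: le sous-groupe de $Z^n(X)$ form\'e des cycles alg\'ebriquement \'equivalents \`a z\'ero n'est pas divisible (seule son image dans $CH^n(X)$ l'est), et la divisibilit\'e modulo \'equivalence rationnelle ne fournit qu'une inclusion \`a $l^s$ pr\`es pour tout $s$, insuffisante pour conclure dans le module non noeth\'erien $Z^n(X)\otimes\Z_l$. L'ingr\'edient manquant est le th\'eor\`eme de Gabber \cite{gabber}: pour $l\ne\car k$ on peut choisir une alt\'eration de degr\'e \emph{premier \`a} $l$, et c'est ce degr\'e-l\`a qui est effectivement absorb\'e par la $\Z_l$-lin\'earit\'e --- c'est pr\'ecis\'ement la raison pour laquelle l'\'enonc\'e est restreint \`a $l\ne\car k$. (En caract\'eristique z\'ero, Hironaka rend la question sans objet et votre argument est complet.)
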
 

\begin{proof} C'est la m\^eme que celle de \cite[preuve du th. 7.3]{bo}, \emph{mutatis
mutandis}. Plus pr\'ecis\'ement, la premi\`ere \'etape est identique. Dans la
deuxi\`eme
\'etape, on remplace la d\'esingularisation
\`a la Hironaka des cycles de codimension $n$ de $X$ par une d\'esingularisation \`a la de Jong
\cite[th. 4.1]{dJ}; pour obtenir des r\'esultats entiers, on utilise le th\'eor\`eme de Gabber
\cite{gabber} disant qu'on peut trouver une telle d\'esingularisation de degr\'e premier \`a
$l$. Enfin, l'argument transcendant de \cite{bo} pour prouver que \'equivalences alg\'ebrique et
homologique co\"\i ncident pour les diviseurs sur une vari\'et\'e lisse $Y$ est remplac\'e par
le suivant: par \cite[lemme 7.10]{bo}, le noyau de $CH^n(Y)\to A^n_\alg(Y)$ est $l$-divisible,
donc les suites exactes de Kummer
\[\Pic(X)\by{l^n} \Pic(X)\to H^2(X,\Z/l^n(1))\]
d\'efinissent des injections
\[0\to A^1_\alg(X)/l^n\to H^2(X,\Z/l^n(1))\]
d'o\`u \`a la limite
\[0\to A^1_\alg(X)\otimes \Z_l\to H^2_\cont(X,\Z_l(1))\]
puisque $A^1_\alg(X)$ est un $\Z$-module de type fini.
\end{proof}

Notons que ces arguments ne n\'ecessitent pas que $X$ soit projective (pour le dernier, cf.
\cite[th. 3]{picfini}). Si le th\'eor\`eme de Gabber n'\'evitait pas $l=p$, la d\'emonstration
s'\'etendrait \`a ce nombre premier.

\begin{conv}
\emph{\`A partir de maintenant, $l$ est suppos\'e diff\'erent de $\car k$ sauf mention
expresse du contraire.} La raison essentielle pour cela est que cette restriction appara\^\i t
dans la proposition
\ref{p3.1} (cf. le commentaire ci-dessus).
\end{conv}

\begin{cor}[cf. \protect{\cite[th. 2.7]{ct-v}}]\label{c3.2} On a une suite
exacte
\[H^3_\cont(X,\Z_l(2))\by{\alpha} H^0(X,\sH^3_\cont(\Z_l(2)))\to \Griff^2(X,\Z_l)\to 0.\]
\end{cor}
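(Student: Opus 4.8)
The plan is to extract this exact sequence from the coniveau spectral sequence for the $l$-adic cohomology $H^*(X,\Z_l(2))$, exactly as in Bloch-Ogus but with the integral refinement provided by Proposition \ref{p3.1}. First I would write down the relevant low-degree exact sequence coming from the coniveau spectral sequence $E_r^{p,q}\Rightarrow H^{p+q}_\cont(X,\Z_l(2))$. Since the theory verifies Gersten (as recalled before Lemma \ref{l2.2}), one has $E_2^{p,q}\simeq H^p(X,\sH^q_\cont(\Z_l(2)))$; in particular $E_2^{0,3}=H^0(X,\sH^3_\cont(\Z_l(2)))$. The edge map \eqref{eq3.1} for $C=R\alpha_*\Z_l(2)^c_X$ is precisely the map $\alpha$ in the statement, namely $H^3_\cont(X,\Z_l(2))\to E_2^{0,3}=A^0(X,H^3_\cont(\Z_l(2)))$.

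Next I would identify the cokernel of $\alpha$. The edge map $\alpha$ is surjective onto $E_\infty^{0,3}$, and the quotient $E_2^{0,3}/E_\infty^{0,3}$ is governed by the differentials leaving $E_r^{0,3}$. The only possibly nonzero such differential in the relevant range is $d_2\colon E_2^{0,3}\to E_2^{2,2}$ (higher differentials $d_r$ for $r\ge 3$ land in $E_r^{r,4-r}$, which vanish for degree reasons since $\sH^q=0$ for $q<0$ and the coniveau filtration forces $E_2^{p,q}=0$ when $q<p$ except on the diagonal). By Proposition \ref{p3.1}, $E_2^{2,2}\simeq A^2_\alg(X)\otimes\Z_l$, the group of codimension-$2$ cycles modulo algebraic equivalence, tensored with $\Z_l$. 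Thus $\Coker(\alpha)=\IM(d_2\colon E_2^{0,3}\to A^2_\alg(X)\otimes\Z_l)$.

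The heart of the matter is then to identify this image with $\Griff^2(X,\Z_l)$. The key point is that $E_\infty^{2,2}$ equals the image of $A^2_\alg(X)\otimes\Z_l$ in $H^4_\cont(X,\Z_l(2))$ under the cycle class, which by Definition \ref{d4.1} is exactly $A^2_\hom(X,\Z_l)$; equivalently, $E_2^{2,2}/E_\infty^{2,2}$ is the kernel of the cycle class on algebraic cycles, i.e.\ $\Griff^2(X,\Z_l)$. Since $d_2$ is the only differential hitting $E^{2,2}_r$ in this range (the incoming $d_2\colon E_2^{0,3}\to E_2^{2,2}$ and outgoing differentials from $E^{2,2}$ land in $E^{4,-1}=0$), one gets $E_\infty^{2,2}=\Coker(d_2)$ and hence a short exact sequence $0\to\IM(d_2)\to E_2^{2,2}\to E_\infty^{2,2}\to 0$. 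Combining with the identification $E_\infty^{2,2}=A^2_\hom(X,\Z_l)$ yields $\IM(d_2)\simeq\Griff^2(X,\Z_l)$. Splicing this together with the surjection $\alpha\colon H^3_\cont(X,\Z_l(2))\surj E_\infty^{0,3}=\Ker(d_2)$ gives the desired exact sequence.

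The main obstacle I anticipate is the bookkeeping of higher differentials, that is, verifying rigorously that no $d_r$ with $r\ge 3$ contributes either to $\Coker(\alpha)$ or to the computation of $E_\infty^{2,2}$, so that $d_2$ alone controls both quotients and the two identifications glue into a single four-term sequence. This requires care because the spectral sequence converges but the vanishing of the off-diagonal $E_2$-terms in the Bloch-Ogus format must be invoked precisely in the right bidegrees; I would handle it by displaying the relevant portion of the $E_2$-page and noting that all entries $E_2^{p,q}$ with $q<p$ vanish by Gersten, leaving only the diagonal and the one differential $d_2\colon E_2^{0,3}\to E_2^{2,2}$ in play. This is entirely parallel to \cite[th. 7.3]{bo}, the only genuinely new input being Proposition \ref{p3.1}, which supplies the integral (rather than merely $\Q_l$-) identification of $E_2^{2,2}$.
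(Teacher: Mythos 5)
Your proof is correct and follows essentially the same route as the paper: the author likewise extracts the sequence from the low-degree exact sequence of the coniveau (Bloch--Ogus) spectral sequence in weight $2$, identifying $E_2^{0,3}$ with $H^0(X,\sH^3_\cont(\Z_l(2)))$, $E_2^{2,2}$ with $A^2_\alg(X)\otimes\Z_l$ via Proposition \ref{p3.1}, and the map $E_2^{2,2}\to H^4_\cont(X,\Z_l(2))$ with the cycle class. Your explicit bookkeeping of the differentials and of $E_\infty^{0,3}$, $E_\infty^{2,2}$ just spells out what the paper leaves implicit in citing that four-term exact sequence.
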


\begin{proof} Cela r\'esulte de la suite exacte
\[H^3_\cont(X,\Z_l(2))\to E_2^{0,3} \to E_2^{2,2}\by{c} H^4_\cont(X,\Z_l(2))\]
provenant de la suite spectrale de Bloch-Ogus en poids $2$, de l'identification de $E_2^{0,3}$
\`a $H^0(X,\sH^3_\cont(\Z_l(2)))$, de celle de $E_2^{2,2}$ \`a $A^2_\alg(X)\otimes\Z_l$
(proposition \ref{p3.1}) et de celle de $c$ \`a l'application classe de cycle.
\end{proof}

L'analogue complexe du corollaire suivant devrait figurer dans \cite{ct-v}:

\begin{cor}\label{c4.1} 
a) On a une suite exacte, modulo des groupes finis:
\begin{multline*}
H^3_\cont(X,\Z_l(2))\otimes \Q/\Z \to  H^0(X,\sH^3(\Q_l/\Z_l(2))
\\
\to\Griff^2(X,\Z_l)\otimes \Q/\Z\to 0
\end{multline*}
(cf. d\'efinition \ref{d4.1}).\\
b) Le groupe $H^0(X,\sH^3(\Q_l/\Z_l(2))$ est d\'enombrable.\\
c) Si $\car k = 0$, il existe $X/k$ projective lisse telle que son corang
soit infini pour $l$ convenable. 
\end{cor}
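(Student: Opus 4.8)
The plan is to deduce all three parts from Theorem~\ref{p2et} and Corollary~\ref{c3.2} by homological algebra, supplemented by two inputs of a different nature: the countability of cycles modulo algebraic equivalence (for b)) and an explicit example of a large Griffiths group (for c)). For a), I would first note that since $k$ is separably closed the module $H^4_\cont(X,\Z_l(2))$ is finitely generated over $\Z_l$, so by the remark following Theorem~\ref{p2et} the group $C_\tors$ is finite; thus Theorem~\ref{p2et} presents $H^0(X,\sH^3_\et(\Q_l/\Z_l(2)))$ as an extension of a finite group by $H^0(X,\sH^3_\cont(\Z_l(2)))\otimes\Q/\Z$. Tensoring the exact sequence of Corollary~\ref{c3.2} by $\Q/\Z$, which is right exact, yields
\[H^3_\cont(X,\Z_l(2))\otimes\Q/\Z\to H^0(X,\sH^3_\cont(\Z_l(2)))\otimes\Q/\Z\to\Griff^2(X,\Z_l)\otimes\Q/\Z\to 0.\]
Because $\Griff^2(X,\Z_l)\otimes\Q/\Z$ is divisible, hence an injective $\Z$-module, the surjection onto it extends along the inclusion $H^0(\sH^3_\cont(\Z_l(2)))\otimes\Q/\Z\hookrightarrow H^0(\sH^3_\et(\Q_l/\Z_l(2)))$ furnished by Theorem~\ref{p2et}; a short diagram chase, in which the only discrepancy is absorbed by the finite group $C_\tors$, then gives the sequence of a), exact modulo finite groups.

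For b), part a) exhibits $H^0(X,\sH^3_\et(\Q_l/\Z_l(2)))$, up to finite groups, as an extension of $\Griff^2(X,\Z_l)\otimes\Q/\Z$ by a quotient of $H^3_\cont(X,\Z_l(2))\otimes\Q/\Z$; the latter is countable since $H^3_\cont(X,\Z_l(2))$ is a finitely generated $\Z_l$-module. The essential point is therefore that $\Griff^2(X,\Z_l)\otimes\Q/\Z$ is countable, which I would reduce to the countability of $A^2_\alg(X)$. The latter holds because the effective cycles of given dimension and degree are parametrized by a Chow variety of finite type over $k$, hence with finitely many connected components, and effective cycles lying in a single component are algebraically equivalent; there are thus only countably many algebraic equivalence classes of subvarieties, so $A^2_\alg(X)$ is countably generated. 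Consequently $A^2_\alg(X)\otimes\Q_l/\Z_l$ is countable, and feeding the exact sequence $0\to\Griff^2(X,\Z_l)\to A^2_\alg(X)\otimes\Z_l\to A^2_\hom(X,\Z_l)\to 0$ of Definition~\ref{d4.1}, in which $A^2_\hom(X,\Z_l)$ is finitely generated, through $-\otimes\Q/\Z$ (the correction terms involving only a finitely generated module) shows that $\Griff^2(X,\Z_l)\otimes\Q/\Z$ is countable, whence so is $H^0(X,\sH^3_\et(\Q_l/\Z_l(2)))$.

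For c), by the Lefschetz principle I may assume $k=\C$. By a) the corank of $H^0(X,\sH^3_\et(\Q_l/\Z_l(2)))$ is bounded below by that of $\Griff^2(X,\Z_l)\otimes\Q/\Z$, which by Remark~\ref{r4.2} is $\Griff^2(X)\otimes\Q_l/\Z_l$; so it suffices to produce a smooth projective $X$ and a prime $l$ for which this last group has infinite corank. I would take for $X$ a very general quintic threefold and invoke Clemens's theorem that $\Griff^2(X)$ is not finitely generated, its Abel-Jacobi images spanning an infinite-dimensional $\Q$-vector space. The main obstacle lies precisely here: infinite rank of $\Griff^2(X)$ does not by itself force $\Griff^2(X)\otimes\Q_l/\Z_l$ to have infinite corank, since a torsion-free group of infinite rank can even be uniquely $l$-divisible, of corank $0$. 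What is needed is that $\dim_{\F_l}\Griff^2(X)/l$ be infinite for suitable $l$, i.e.\ that infinitely many of Clemens's cycles stay independent modulo $l$; securing this, through a more careful integral or mod-$l$ study of the Abel-Jacobi invariants, is the crux of c).
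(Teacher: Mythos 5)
Parts a) and b) of your proposal are correct and follow essentially the same route as the paper: a) is obtained by combining Th\'eor\`eme \ref{p2et} with the sequence of the corollaire \ref{c3.2} tensored by $\Q/\Z$, and b) rests on the cofinite type of the left-hand term together with the classical countability of cycles modulo algebraic equivalence. Your extra remarks (finiteness of $C_\tors$ over a separably closed field, the Chow-variety argument for countability) are consistent with what the paper intends.

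Part c), however, contains a genuine gap, which to your credit you identify yourself: Clemens's theorem that $\Griff^2(X)$ of a very general quintic threefold has infinite rank does \emph{not} imply that $\Griff^2(X)\otimes\Q_l/\Z_l$ has infinite corank, since a torsion-free group of infinite rank can perfectly well be $l$-divisible. What is needed is an $X$ and a prime $l$ with $\dim_{\F_l}\Griff^2(X)/l$ infinite, and you leave precisely this point open. The paper closes it by appealing to a different input: Schoen's theorem \cite{schoen} produces smooth projective varieties (even over $\bar\Q$) and primes $l$ for which $\Griff^2(X)/l$ is infinite, and one then passes from this to the infinitude of the corank of $\Griff^2(X)\otimes\Q_l/\Z_l$ via \cite[prop.~4.1]{ct-v} (or the corollaire \ref{c4.10} of the present paper) --- note that even this last passage from ``$\Griff^2(X)/l$ infinite'' to ``$\Griff^2(X)\otimes\Q_l/\Z_l$ of infinite corank'' is not formal and requires an argument. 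So your strategy for c) must be replaced: substitute Schoen's mod-$l$ result for Clemens's rational one, rather than trying to refine the Abel--Jacobi analysis of the quintic threefold.
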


(Pr\'ecisons: ``modulo des groupes finis" signifie ``dans la localisation de la cat\'egorie des groupes ab\'eliens relative \`a la sous-cat\'egorie \'epaisse des groupes ab\'eliens finis".)

\begin{proof} a) r\'esulte du th\'eor\`eme \ref{p2et} et du corollaire \ref{c3.2}. 
Pour b), le terme de gauche dans la suite de a) est de cotype fini, donc d\'enombrable, et le
terme de droite l'est aussi (propri\'et\'e classique des cycles modulo l'\'equivalence
alg\'ebrique). Enfin, d'apr\`es Schoen \cite{schoen}, on a des exemples de $X$ et de nombres
premiers $l$ (m\^eme sur $\bar \Q$) o\`u $\Griff^2(X)/l$ est infini; en utilisant \cite[prop.
4.1]{ct-v} (voir aussi corollaire \ref{c4.10} du pr\'esent article), on en d\'eduit que $\Griff^2(X)\otimes \Q_l/\Z_l$ est de corang infini.
\end{proof}

\subsection{Quelques calculs de groupes de torsion} On veut maintenant pr\'eciser le
corollaire \ref{c4.1} a) en d\'ecrivant explicitement le noyau de l'application
$H^3_\cont(X,\Z_l(2))\otimes \Q/\Z\to H^0(X,\sH^3_\cont(\Z_l(2)))\otimes \Q/\Z$.

\begin{defn} Soit $A$ un $\Z_l$-module de la forme $\Z_l,\Q_l,\Z/l^n,\Q_l/\Z_l$. On note $N
H^3_\cont(X,A)$ le premier cran de la filtration par le coniveau sur
$H^3_\cont(X,A)$ et 
\[H^3_\tr(X,A)= \frac{H^3_\cont(X,A)}{N H^3_\cont(X,A)}.\]
Si on a un twist \`a la Tate, on note $N H^3_\cont(X,A(n)):= N H^3_\cont(X,A)(n)$.
\end{defn}

Notons que $NH^3_\cont(X,A(2))= H^3(X,A(2))$ (cohomologie motivique de Nisnevich) pour
$A=\Z/l^n$ ou $\Q_l/\Z_l$, d'apr\`es la proposition \ref{p3}.

\begin{rque}\label{r4.1} Si $X$ est propre, les $\Z_l$-modules $H^3_\tr(X,A)$ sont des
invariants birationnels de $X$, avec l'action de $Gal(k/k_0)$ si $X$ est d\'efini sur un
sous-corps $k_0$ de cl\^oture s\'eparable $k$. C'est d\^u \`a Grothendieck
\cite[9.4]{BrIII}. 
\end{rque}

Le lemme \ref{l3.4} implique:

\begin{lemme}\label{l3.4a} Le $\Z_l$-module de type fini $H^3_\tr(X,\Z_l(2))$ est sans
torsion.\qed
\end{lemme}

Par d\'efinition de $NH^3_\cont(X,\Z_l(2))$, la suite exacte du corollaire \ref{c3.2} se
raffine en une suite exacte:
\[0\to H^3_\tr(X,\Z_l(2))\to H^0(X,\sH^3_\cont(\Z_l(2)))\to \Griff^2(X,\Z_l)\to 0\]
qui montre incidemment que $\Griff^2(X,\Z_l)$ est un invariant birationnel pour $X$ propre et
lisse (cf. remarque \ref{r4.1}). En r\'eutilisant le lemme \ref{l3.4}, on en d\'eduit:

\begin{prop}\label{l4.2} On a une suite exacte
\begin{multline*}
0\to \Griff^2(X,\Z_l)_\tors\to H^3_\tr(X,\Z_l(2))\otimes \Q/\Z\\
\to H^0(X,\sH^3_\cont(\Z_l(2)))\otimes \Q/\Z\to \Griff^2(X,\Z_l)\otimes \Q/\Z\to 0.\qed
\end{multline*}
\end{prop}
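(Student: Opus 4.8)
Le plan est de d\'eduire la suite exacte annonc\'ee de la suite exacte courte
\[0\to H^3_\tr(X,\Z_l(2))\to H^0(X,\sH^3_\cont(\Z_l(2)))\to \Griff^2(X,\Z_l)\to 0\]
obtenue juste avant l'\'enonc\'e, en lui appliquant le foncteur $-\otimes_\Z \Q/\Z$. Le point de d\'epart est un rappel purement formel: pour tout groupe ab\'elien $M$, la suite exacte courte $0\to \Z\to \Q\to \Q/\Z\to 0$ induit un isomorphisme canonique $\operatorname{Tor}_1(M,\Q/\Z)\iso M_\tors$. J'\'ecrirais alors la suite exacte longue des $\operatorname{Tor}$ associ\'ee \`a la suite exacte courte ci-dessus.

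Ensuite, j'invoquerais la nullit\'e des deux premiers termes de cette suite longue. En effet, le lemme \ref{l3.4a} assure que $H^3_\tr(X,\Z_l(2))$ est sans torsion et le lemme \ref{l3.4} que $H^0(X,\sH^3_\cont(\Z_l(2)))$ l'est aussi; par l'isomorphisme $\operatorname{Tor}_1(M,\Q/\Z)\iso M_\tors$, leurs groupes de torsion, donc leurs $\operatorname{Tor}_1$ contre $\Q/\Z$, sont nuls. La suite exacte longue se r\'eduit ainsi \`a la suite exacte \`a quatre termes de l'\'enonc\'e, le terme de gauche $\operatorname{Tor}_1(\Griff^2(X,\Z_l),\Q/\Z)$ s'identifiant \`a $\Griff^2(X,\Z_l)_\tors$ via le m\^eme isomorphisme.

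Il n'y a ici aucun obstacle s\'erieux: l'argument est enti\`erement formel une fois acquises la suite exacte courte pr\'eliminaire et les deux r\'esultats d'absence de torsion. Le seul point m\'eritant attention est de bien utiliser \emph{les deux} lemmes \ref{l3.4} et \ref{l3.4a} pour annuler les deux premiers termes de la suite longue; c'est pr\'ecis\'ement l\`a que r\'eside le contenu de l'\'enonc\'e, puisque c'est la torsion de $\Griff^2(X,\Z_l)$, et non celle des deux autres groupes, qui fournit le terme de gauche de la suite finale.
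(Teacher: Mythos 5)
Your argument is correct and is exactly the paper's (unwritten) proof: tensor the short exact sequence $0\to H^3_\tr(X,\Z_l(2))\to H^0(X,\sH^3_\cont(\Z_l(2)))\to \Griff^2(X,\Z_l)\to 0$ with $\Q/\Z$ and use torsion-freeness to identify the leftmost term of the Tor long exact sequence with $\Griff^2(X,\Z_l)_\tors$. The only tiny remark is that the vanishing of $\operatorname{Tor}_1(H^0(X,\sH^3_\cont(\Z_l(2))),\Q/\Z)$ (lemme \ref{l3.4}) already suffices for exactness on the left, the lemme \ref{l3.4a} being anyway a consequence of \ref{l3.4} since $H^3_\tr(X,\Z_l(2))$ injects into $H^0(X,\sH^3_\cont(\Z_l(2)))$.
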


\begin{rque}\label{r4.3} Dans cette remarque, nous adoptons la convention contravariante pour
les motifs purs sur un corps. Supposons que $X$, de dimension $d$, v\'erifie la conjecture
standard C et la conjecture de nilpotence suivante: l'id\'eal $\Ker(CH^d(X\times X)\otimes
\Q\to A^d_\num(X\times X)\otimes \Q)$ de l'anneau des correspondances de Chow est nilpotent.
Ces propri\'et\'es sont v\'erifi\'ees par exemple si $X$ est une vari\'et\'e ab\'elienne:
Lieberman-Kleiman \cite{kleiman} pour la premi\`ere et Kimura \cite{kimura} pour la seconde.
Alors le motif num\'erique de $X$ admet une d\'ecomposition de K\"unneth, qui se rel\`eve en
une d\'ecomposition de Chow-K\"unneth de son motif de Chow:
\[h(X)=\bigoplus_{i=0}^{2d} h^i(X).\]

Mais le th\'eor\`eme de semi-simplicit\'e de Jannsen \cite{jannsenss} implique que chaque
facteur num\'erique
$h^i_\num(X)$ admet une d\'ecomposition plus fine, provenant de sa d\'ecomposition isotypique:
\[h^i_\num(X)=\bigoplus_{j=0}^{i/2} h^{i,j}_\num(X)(-j)\]
o\`u $h^{i,j}_\num(X)$ est effectif mais aucun facteur simple de $h^{i,j}_\num(X)(1)$ n'est
effectif. Cette d\'ecomposition se rel\`eve de nouveau pour donner une d\'e\-com\-po\-si\-tion
de Chow-K\"unneth raffin\'ee (cf. \cite[th. 7.7.3]{kmp}):
\[h(X)=\bigoplus_{i=0}^{2d} \bigoplus_{j=0}^{i/2} h^{i,j}(X)(-j).\]

Notons $\Ab$ la cat\'egorie des groupes ab\'eliens, $\sA$ le quotient de $\Ab$ par la
sous-cat\'egorie \'epaisse des groupes ab\'eliens d'exposant fini et, pour tout anneau
commutatif $R$, $\Chow(k,R)$ la cat\'egorie des motifs de Chow \`a coefficients dans $R$. On
observe que le foncteur 
\[\Hom(-,-):\Chow(k,\Z)^\op\times \Chow(k,\Z)\to \Ab\]
s'\'etend en un foncteur
\[\Hom(-,-):\Chow(k,\Q)^\op\times \Chow(k,\Q)\to \sA.\]

Par construction, $H^3_\tr(X,\Q_l) =H^*_\cont(h^{3,0}(X),\Q_l)$. Au moins si $d=3$, on peut
montrer que, d'autre part,
\[\Griff^2(X,\Z_l)\simeq  \Hom(h^{3,0}(X),\L)\otimes \Z_l\]
o\`u $\L$ est le motif de Lefschetz et l'isomorphisme est dans $\sA$. Ainsi, la proposition
\ref{l4.2} et le th\'eor\`eme \ref{p2et} montrent que (si $d=3$) \emph{la nullit\'e de
$h^{3,0}(X)$ entra\^\i ne la finitude de $H^0(X,\sH^3(\Q_l/\Z_l(2)))$}. 

D'autre part, la conjecture de Bloch-Beilinson--Murre \cite{jannsen} implique que la nullit\'e
de
$h^{3,0}(X)$ d\'ecoule de celle de $H^3_\tr(X,\Q_l)$: conjecturalement, celle-ci est donc
suffisante pour impliquer la finitude du groupe $H^0(X,\sH^3(\Q_l/\Z_l(2)))$ (au moins si
$\dim X=3$). 

Ceci est une variante de la conjecture 4.5 de Colliot-Th\'el\`ene--Voisin \cite{ct-v}. On
verra au th\'eor\`eme \ref{t3.1} c) qu'elle est vraie si $k$ est la cl\^oture alg\'ebrique d'un
corps fini $k_0$ et que $X$ provient de la classe $B_\tate(k_0)$ de \cite{cell}.

On peut se demander si la r\'eciproque est vraie. Elle est fausse, cf. th\'eor\`eme \ref{t5.1}.
\end{rque}

Le lemme \ref{l3.4a} et la proposition \ref{p3} donnent un diagramme commutatif de suites
exactes
\begin{equation}\label{eq4.4}
\begin{CD}
\s 0&\to& \s N H^3_\cont(X,\Z_l(2))\otimes \Q/\Z &\to& \s H^3_\cont(X,\Z_l(2))\otimes \Q/\Z
&\to&\s  H^3_\tr(X,\Z_l(2))\otimes \Q/\Z&\to&\s 0\\ && @V{a}VV @VVV @V{b}VV\\
\s 0&\to&\s  H^3_\Nis(X,\Q_l/\Z_l(2)) &\to&\s  H^3_\et(X,\Q_l/\Z_l(2)) &\to&\s 
H^3_\tr(X,\Q_l/\Z_l(2)) &\to& \s 0
\end{CD}
\end{equation}
dans lequel la fl\`eche verticale centrale est injective de conoyau fini, isomorphe \`a
$H^4_\cont(X,\Z_l(2))_\tors$. Par le lemme du serpent, on en d\'eduit:

\begin{prop}\label{p4.2} Avec les notations de \eqref{eq4.4}, $a$ est injective 
et on a une suite exacte
\[0\to \Ker b \to \Coker a \to H^4_\cont(X,\Z_l(2))_\tors\to \Coker b\to 0.\]
\end{prop}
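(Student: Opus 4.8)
Le plan est d'appliquer le lemme du serpent au morphisme de suites exactes courtes \eqref{eq4.4}. Notons $a$, $c$ et $b$ les trois fl\`eches verticales, de gauche, du milieu et de droite respectivement. Le seul point non formel est d\'ej\`a acquis dans le texte qui pr\'ec\`ede \eqref{eq4.4}, \`a savoir (via le lemme \ref{l3.4a} et la proposition \ref{p3}) que la fl\`eche centrale $c$ est injective et que $\Coker c\simeq H^4_\cont(X,\Z_l(2))_\tors$; en particulier $\Ker c=0$. Tout le reste sera purement diagrammatique.

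Je commencerais par \'ecrire la suite exacte longue du serpent associ\'ee au diagramme \eqref{eq4.4}:
\[0\to \Ker a\to \Ker c\to \Ker b\to \Coker a\to \Coker c\to \Coker b\to 0.\]
Comme $\Ker c=0$, l'exactitude en $\Ker a$ (l'image de $0\to \Ker a$ co\"\i ncide avec le noyau de $\Ker a\to \Ker c$) force l'injection $\Ker a\inj \Ker c=0$, d'o\`u $\Ker a=0$: c'est l'injectivit\'e de $a$ annonc\'ee dans la proposition. La m\^eme annulation $\Ker c=0$ rend nulle la fl\`eche $\Ker c\to \Ker b$, donc, par exactitude en $\Ker b$, la fl\`eche $\Ker b\to \Coker a$ est injective. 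La portion restante de la suite du serpent se lit alors
\[0\to \Ker b\to \Coker a\to \Coker c\to \Coker b\to 0.\]
Il ne reste plus qu'\`a substituer l'isomorphisme $\Coker c\simeq H^4_\cont(X,\Z_l(2))_\tors$ pour obtenir exactement la suite exacte \`a quatre termes de l'\'enonc\'e.

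Il n'y a donc pas d'obstacle v\'eritable dans cette derni\`ere \'etape: toute la substance se trouve en amont, dans l'identification du comportement de $c$ (injectivit\'e et conoyau de torsion), laquelle repose sur le fait que $H^3_\tr(X,\Z_l(2))$ est sans torsion (lemme \ref{l3.4a}) et sur la suite exacte de la proposition \ref{p3} identifiant $N H^3_\cont(X,\Z_l(2))\otimes \Q/\Z$ et $H^3_\Nis(X,\Q_l/\Z_l(2))$. Une fois ces donn\'ees en main, la conclusion est une application imm\'ediate du lemme du serpent, sans calcul.
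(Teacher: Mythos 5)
Votre démonstration est correcte et suit exactement la même voie que l'article, dont la preuve se réduit précisément à l'application du lemme du serpent au diagramme \eqref{eq4.4}, la flèche centrale étant injective de conoyau $H^4_\cont(X,\Z_l(2))_\tors$. Seule petite imprécision d'attribution: ces propriétés de la flèche centrale proviennent de la suite des coefficients universels pour la cohomologie continue (et non du lemme \ref{l3.4a} ni de la proposition \ref{p3}, lesquels servent à établir les deux lignes du diagramme et la flèche $a$), mais cela ne change rien à l'argument.
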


Voici une application de la proposition \ref{p4.2}. 

\begin{cor}\label{c4.10} Soit $H^3(X,\Q_l/\Z_l(2))^0$ le noyau de la composition
\[H^3(X,\Q_l/\Z_l(2))\to H^3_\et(X,\Q_l/\Z_l(2))\to H^4_\cont(X,\Z_l(2))_\tors.\]
Alors $\IM a\subset H^3(X,\Q_l/\Z_l(2))^0$ (notations de \eqref{eq4.4}) et  on a un isomorphisme
\[
\Griff^2(X,\Z_l)_\tors\iso \Ker b\iso \Coker a^0
\]
o\`u $a^0:N H^3_\cont(X,\Z_l(2))\otimes \Q/\Z\by{a} H^3(X,\Q_l/\Z_l(2))^0$ est l'application induite par $a$.
\end{cor}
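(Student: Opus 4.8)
The backbone of the argument is the snake lemma applied to \eqref{eq4.4}, already packaged in Proposition \ref{p4.2}; the work is to identify the three objects of that snake sequence with the concrete groups in the statement. The plan is to treat the inclusion and the two isomorphisms in turn, using throughout the description recorded just after \eqref{eq4.4}: the central vertical map is injective with cokernel $H^4_\cont(X,\Z_l(2))_\tors$, and concretely the quotient map $H^3_\et(X,\Q_l/\Z_l(2))\to H^4_\cont(X,\Z_l(2))_\tors$ is the Bockstein $\partial$ attached to the triangle $\Z_l(2)^c_X\to\Q_l(2)^c_X\to\Q_l/\Z_l(2)\by{+1}$, whose kernel is exactly the image of that central map.

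First I would settle $\IM a\subset H^3(X,\Q_l/\Z_l(2))^0$. By the bottom row of \eqref{eq4.4} the first arrow in the statement is the inclusion $H^3_\Nis(X,\Q_l/\Z_l(2))\inj H^3_\et(X,\Q_l/\Z_l(2))$, so the composite defining $H^3(X,\Q_l/\Z_l(2))^0$ is the restriction of $\partial$. By commutativity of the left-hand square of \eqref{eq4.4}, the image in $H^3_\et(X,\Q_l/\Z_l(2))$ of any $a(x)$ equals the value of the central map on the image of $x$, hence lies in $\IM(\text{central})=\Ker\partial$; thus $a(x)\in H^3(X,\Q_l/\Z_l(2))^0$. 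Next, for $\Ker b\iso\Coker a^0$ I would reread the snake sequence of Proposition \ref{p4.2}: the map $\Coker a\to H^4_\cont(X,\Z_l(2))_\tors$ there is induced by $H^3_\Nis\inj H^3_\et\by{\partial}H^4_\cont(X,\Z_l(2))_\tors$, so its kernel is $H^3(X,\Q_l/\Z_l(2))^0/\IM a=\Coker a^0$ (the quotient being legitimate by the previous step). Since $\Ker b$ maps isomorphically onto that kernel through the connecting homomorphism, this yields $\Ker b\iso\Coker a^0$.

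It remains to prove $\Griff^2(X,\Z_l)_\tors\iso\Ker b$. I would write $T=H^3_\tr(X,\Z_l(2))$ and $S=H^0(X,\sH^3_\cont(\Z_l(2)))$, both torsion-free by Lemmas \ref{l3.4a} and \ref{l3.4}; Proposition \ref{l4.2} then identifies $\Griff^2(X,\Z_l)_\tors$ with $\Ker(T\otimes\Q/\Z\by{p}S\otimes\Q/\Z)$, where $p$ is induced by the inclusion $T\inj S$ of the refined sequence of Corollary \ref{c3.2}. I would then compare $b$ with $p$ via the square whose vertical maps are $b\colon T\otimes\Q/\Z\to H^3_\tr(X,\Q_l/\Z_l(2))$ and $f\colon S\otimes\Q/\Z\inj H^0(X,\sH^3_\et(\Q_l/\Z_l(2)))$ (the injection of Theorem \ref{p2et}), and whose horizontal maps are $p$ and the coniveau edge map $\iota\colon H^3_\tr(X,\Q_l/\Z_l(2))\inj H^0(X,\sH^3_\et(\Q_l/\Z_l(2)))$ (injective because $H^3_\tr$ is the quotient of $H^3_\et$ by the first coniveau step, i.e. by $\Ker\iota$). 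Granting commutativity of this square, injectivity of $f$ gives $\Ker b=\Ker(\iota\circ b)=\Ker(f\circ p)=\Ker p=\Griff^2(X,\Z_l)_\tors$.

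The main obstacle is precisely the commutativity of that last square: it is the compatibility of the Bloch--Ogus coniveau edge maps in $\Z_l$- and in $\Q_l/\Z_l$-coefficients with the comparison morphism $\sH^3_\cont(\Z_l(2))\otimes\Q/\Z\to\sH^3_\et(\Q_l/\Z_l(2))$ that underlies both $f$ and $b$. I would establish it from the naturality of the coniveau spectral sequence in the coefficient complex $C\in D(X_\Zar)$, applied to $R\alpha_*\Z_l(2)^c_X\otimes\Q/\Z\to R\alpha_*(\Q_l/\Z_l)(2)_\et$, checking on the $E_2^{0,3}$-terms that the induced maps on global sections of the $\sH^3$-sheaves are exactly $f$ and $b$. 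Everything else reduces to a routine diagram chase.
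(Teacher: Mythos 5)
Your proposal is correct, and for the inclusion $\IM a\subset H^3(X,\Q_l/\Z_l(2))^0$ and the isomorphism $\Ker b\iso\Coker a^0$ it coincides with the paper's argument (the snake sequence of la proposition \ref{p4.2}, with the map $\Coker a\to H^4_\cont(X,\Z_l(2))_\tors$ identified as the Bockstein). For $\Griff^2(X,\Z_l)_\tors\iso\Ker b$ the strategy is also the same — compare $b$ with the map $H^3_\tr(X,\Z_l(2))\otimes\Q/\Z\to H^0(X,\sH^3_\cont(\Z_l(2)))\otimes\Q/\Z$ of la proposition \ref{l4.2} through a commutative square whose right vertical arrow is injective — but you choose that arrow to be the $f$ of le th\'eor\`eme \ref{p2et}, which forces you to identify $f$ (built out of $K(2)$, $\alpha$ and $\beta$) with the map induced on $E_2^{0,3}$ by the change of coefficients; you rightly flag this as the main obstacle, and it is a nontrivial unwinding. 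The paper sidesteps it: it takes the right vertical arrow to be the \emph{natural} map $c\colon H^0(X,\sH^3_\cont(\Z_l(2)))\otimes\Q/\Z\to H^0(X,\sH^3_\cont(\Q_l/\Z_l(2)))$, for which commutativity with $b$ is immediate from naturality of the coniveau spectral sequence, and whose injectivity follows from the torsion-freeness of the \emph{sheaf} $\sH^3_\cont(\Z_l(2))$ (the Merkurjev--Suslin input behind le lemme \ref{l3.4}): the sheaf sequence $0\to\sH^3_\cont(\Z_l(2))\to\sH^3_\cont(\Q_l(2))\to\sH^3_\cont(\Q_l/\Z_l(2))$ is then exact and one applies $H^0$. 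I would advise you to adopt this variant: it buys you the injectivity for free and removes the only step of your argument that is not a routine diagram chase. (Note also that for the injectivity of $\iota$ and of $c$ you implicitly use the Gersten property for the $\Q_l/\Z_l$-coefficient theory, so that $N^1H^3=\Ker\bigl(H^3\to H^0(\sH^3)\bigr)$; this holds by the same {\bf COH1}/{\bf COH3} argument as for $\Z_l(n)^c$ and is worth a word.)
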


\begin{proof} La premi\`ere assertion est \'evidente.  Notons $a^0$ l'application induite: 
la suite exacte de la proposition \ref{p4.2} induit donc un isomorphisme
\[\Ker b\iso \Coker a^0.\]

D'autre part, la suite exacte de la proposition \ref{l4.2} s'ins\`ere dans un diagramme
commutatif de suites exactes
\[\begin{CD}
0\to \Griff^2(X,\Z_l)_\tors&\to&  H^3_\tr(X,\Z_l(2))\otimes \Q/\Z
&\to& H^0(X,\sH^3_\cont(\Z_l(2)))\otimes \Q/\Z\\
&&@V{b}VV @V{c}VV\\
&& H^3_\tr(X,\Q_l/\Z_l(2))
&\to& H^0(X,\sH^3_\cont(\Q_l/\Z_l(2)))
\end{CD}\]

Comme le faisceau $\sH^3_\cont(\Z_l(2))$ est sans torsion, la suite
\begin{multline*}
0\to H^0(X,\sH^3_\cont(\Z_l(2)))\to H^0(X,\sH^3_\cont(\Q_l(2)))\\
\to H^0(X,\sH^3_\cont(\Q_l/\Z_l(2)))\end{multline*}
est exacte, ce qui signifie que $c$ est injective dans le diagramme ci-dessus. On en d\'eduit un isomorphisme
\[ \Griff^2(X,\Z_l)_\tors \iso \Ker b\]
d'o\`u le corollaire.
\end{proof}

\subsection{Les homomorphismes $A^2_\hom(X,\Z_l)\otimes \Q_l/\Z_l\to A^2_\hom(X,\Q_l/\Z_l)$
et  $\Griff^2(X,\Z_l)\otimes \Q_l/\Z_l\to \Griff^2(X,\Q_l/\Z_l)$} On garde les notations de la
d\'efinition \ref{d4.1}.

\begin{prop}\label{p4.4} On a des suites exactes
\begin{multline*}
A^2_\hom(X,\Z_l)_\tors\to \Griff^2(X,\Z_l)\otimes \Q_l/\Z_l\to \Griff^2(X,\Q_l/\Z_l)\\
\to A^2_\hom(X,\Z_l)\otimes \Q_l/\Z_l\to A^2_\hom(X,\Q_l/\Z_l)\to 0
\end{multline*}
\[C_\tors \to A^2_\hom(X,\Z_l)\otimes \Q_l/\Z_l\to A^2_\hom(X,\Q_l/\Z_l)\]
o\`u $C$ est comme dans \eqref{eqjan} (cf. th\'eor\`eme \ref{p2et}). En particulier, 
l'application $\Griff^2(X,\Z_l)\otimes \Q_l/\Z_l\to \Griff^2(X,\Q_l/\Z_l)$ est de noyau et de
conoyau finis.
\end{prop}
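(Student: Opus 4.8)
Le plan est de relier les suites exactes courtes qui définissent $\Griff^2$ et $A^2_\hom$ aux coefficients $\Z_l$ et $\Q_l/\Z_l$ au moyen du foncteur $-\otimes\Q_l/\Z_l$. Posons $G=A^2_\alg(X)$. Pour tout $A\in\{\Z_l,\Q_l/\Z_l\}$, le morphisme classe de cycle $\cl^2_A\colon G\otimes A\to H^4_\cont(X,A(2))$ a, par la définition \ref{d4.1}, pour noyau $\Griff^2(X,A)$ et pour image $A^2_\hom(X,A)$; on utilisera l'identité $G\otimes\Q_l/\Z_l=(G\otimes\Z_l)\otimes\Q_l/\Z_l$.

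Je commencerais par établir la compatibilité qui est au cœur de la preuve. La suite exacte de coefficients $0\to\Z_l(2)\to\Q_l(2)\to\Q_l/\Z_l(2)\to0$ et l'identification $H^4_\cont(X,\Q_l(2))=H^4_\cont(X,\Z_l(2))\otimes\Q_l$ fournissent une injection de coefficients universels
\[\iota\colon H^4_\cont(X,\Z_l(2))\otimes\Q_l/\Z_l\hookrightarrow H^4_\et(X,\Q_l/\Z_l(2)),\]
de conoyau $H^5_\cont(X,\Z_l(2))_\tors$. Les carrés commutatifs précédant la définition \ref{d4.1} (compatibilité des classes de cycles à la réduction modulo $l^s$), joints à \eqref{eq4.2}, entraînent que $\cl^2_{\Q_l/\Z_l}=\iota\circ(\cl^2_{\Z_l}\otimes\Q_l/\Z_l)$. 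Comme $\iota$ est injective, on en déduit
\[\Griff^2(X,\Q_l/\Z_l)=\Ker(\cl^2_{\Z_l}\otimes\Q_l/\Z_l),\qquad A^2_\hom(X,\Q_l/\Z_l)\cong\IM(\cl^2_{\Z_l}\otimes\Q_l/\Z_l).\]

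Ensuite j'appliquerais $-\otimes\Q_l/\Z_l$ aux deux suites exactes courtes déduites de $\cl^2_{\Z_l}$, à savoir $0\to\Griff^2(X,\Z_l)\to G\otimes\Z_l\to A^2_\hom(X,\Z_l)\to0$ et $0\to A^2_\hom(X,\Z_l)\to H^4_\cont(X,\Z_l(2))\to C\to0$ (où $C$ est le conoyau de \eqref{eqjan}), en utilisant $\operatorname{Tor}_1(M,\Q_l/\Z_l)=M_\tors$. La première livre une suite exacte
\[A^2_\hom(X,\Z_l)_\tors\by{\partial}\Griff^2(X,\Z_l)\otimes\Q_l/\Z_l\by{p}G\otimes\Q_l/\Z_l\by{\bar\cl}A^2_\hom(X,\Z_l)\otimes\Q_l/\Z_l\to0,\]
la seconde une suite exacte
\[C_\tors\to A^2_\hom(X,\Z_l)\otimes\Q_l/\Z_l\by{j}H^4_\cont(X,\Z_l(2))\otimes\Q_l/\Z_l\to C\otimes\Q_l/\Z_l\to0.\]
Puisque $\cl^2_{\Z_l}\otimes\Q_l/\Z_l=j\circ\bar\cl$, la première étape donne $\Griff^2(X,\Q_l/\Z_l)=\bar\cl^{-1}(\Ker j)$; comme $\bar\cl$ est surjective, l'image de $\Griff^2(X,\Q_l/\Z_l)\by{\bar\cl}A^2_\hom(X,\Z_l)\otimes\Q_l/\Z_l$ est exactement $\Ker j$, qui est aussi le noyau de $A^2_\hom(X,\Z_l)\otimes\Q_l/\Z_l\to A^2_\hom(X,\Q_l/\Z_l)$ (injectivité de $\iota$). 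Par ailleurs, le noyau de $\Griff^2(X,\Z_l)\otimes\Q_l/\Z_l\to\Griff^2(X,\Q_l/\Z_l)$ coïncide avec $\Ker p=\IM\partial$. Le recollement de ces deux suites le long de $\Ker j$ fournit alors les deux suites exactes annoncées; les vérifications d'exactitude restantes sont une chasse au diagramme de routine.

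Enfin, pour la finitude: la première suite exacte montre que le noyau de $\Griff^2(X,\Z_l)\otimes\Q_l/\Z_l\to\Griff^2(X,\Q_l/\Z_l)$ est un quotient de $A^2_\hom(X,\Z_l)_\tors$ et que son conoyau est un quotient de $C_\tors$. Or, $k$ étant séparablement clos et $l\ne\car k$, le groupe $C_\tors$ est fini et $H^4_\cont(X,\Z_l(2))$ est un $\Z_l$-module de type fini (cf. le commentaire suivant le théorème \ref{p2et}); donc $A^2_\hom(X,\Z_l)\subset H^4_\cont(X,\Z_l(2))$ est de type fini et $A^2_\hom(X,\Z_l)_\tors$ est fini. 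Le noyau et le conoyau sont ainsi finis. La difficulté principale réside dans la première étape: la factorisation de $\cl^2_{\Q_l/\Z_l}$ à travers $\iota$ et l'existence même de l'injection de coefficients universels $\iota$ (reposant sur l'identité $H^4_\cont(X,\Q_l(2))=H^4_\cont(X,\Z_l(2))\otimes\Q_l$); tout le reste est formel.
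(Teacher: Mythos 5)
Votre démonstration est correcte et suit essentiellement la même voie que celle de l'article : on tensorise par $\Q_l/\Z_l$ les deux suites exactes courtes définissant $\Griff^2(X,\Z_l)$ et $C$, et on utilise l'injectivité de l'application de coefficients universels $H^4_\cont(X,\Z_l(2))\otimes\Q_l/\Z_l\hookrightarrow H^4_\et(X,\Q_l/\Z_l(2))$ jointe à la compatibilité des classes de cycles pour identifier $\Griff^2(X,\Q_l/\Z_l)$ et $A^2_\hom(X,\Q_l/\Z_l)$ au noyau et à l'image de $\cl^2_{\Z_l}\otimes\Q_l/\Z_l$. L'article organise le même calcul en deux diagrammes commutatifs trait\'es par le lemme du serpent, là où vous explicitez la factorisation $j\circ\bar\cl$ ; le contenu est identique.
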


\begin{proof} On a un diagramme commutatif de suites exactes
\[\begin{CD}
\s A^2_\hom(X,\Z_l)_\tors &\to& \s \Griff^2(X,\Z_l)\otimes \Q/\Z &\to& \s
A^2_\alg(X,\Z_l)\otimes \Q/\Z  &\to& \s A^2_\hom(X,\Z_l)\otimes \Q/\Z &\to& \s 0\\
&& @VVV @V{\wr}VV @VVV\\
\s 0 &\to& \s \Griff^2(X,\Q_l/\Z_l) &\to& \s A^2_\alg(X,\Q_l/\Z_l) 
&\to& \s A^2_\hom(X,\Q_l/\Z_l) &\to& \s 0
\end{CD}\]
qui donne la premi\`ere suite de la proposition, par application du lemme du serpent. Pour la
seconde, on utilise le diagramme commitatif de suites exactes
\[\begin{CD}
 \left(\frac{H^4_\cont(X,\Z_l(2))}{N^2H^4_\cont(X,\Z_l(2))}\right)_\tors &\to&
A^2_\hom(X,\Z_l)\otimes \Q/\Z &\to&  H^4_\cont(X,\Z_l)\otimes \Q/\Z\\ && @VVV @VVV\\
 0 &\to&  A^2_\hom(X,\Q_l/\Z_l) &\to&  H^4_\et(X,\Q_l/\Z_l(2)) 
\end{CD}\]
en remarquant que la fl\`eche verticale de droite est injective.
\end{proof}

\subsection{Le sous-groupe de torsion de $CH^2(X,\Z_l)_\alg$}Terminons cette analyse de la
torsion en d\'eterminant celle de  $CH^2(X,\Z_l)_\alg$, sous-groupe de
$CH^2(X)\otimes\Z_l$ form\'e des classes de cycles al\-g\'e\-bri\-que\-ment \'equivalentes \`a
z\'ero, lorsque $X$ est propre: voir corollaire \ref{c4.2}. Pour cela nous avons besoin de la
proposition suivante:

\begin{prop}\label{p4.3} Supposons $k$ s\'eparablement clos, $X/k$ propre et lisse et $i <2n$.
Soit $l$ un nombre premier; si $l=\car k$, on suppose $k$ alg\'ebriquement clos.
Alors l'image de l'application cycle 
\eqref{eq1} est \'egale \`a $H^i_\cont(X,\Z_l(n))_\tors$. En particulier, $H^i_\et(X,\Z(n))$
est extension d'un groupe de torsion (fini pour $l\ne p$) par un groupe divisible, et
\[H^i_\et(X,\Z(n))\otimes \Q_l/\Z_l =0.\]
\end{prop}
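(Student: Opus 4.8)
The plan is to prove the two inclusions separately. Since $X$ is proper and smooth over a separably closed field and $l\ne\car k$, the target $H^i_\cont(X,\Z_l(n))=\lim_r H^i_\et(X,\mu_{l^r}^{\otimes n})$ is a finitely generated $\Z_l$-module, so its torsion subgroup is finite; the image of \eqref{eq1} is one of its subgroups, and I claim it equals $H^i_\cont(X,\Z_l(n))_\tors$. The inclusion $\IM\supseteq H^i_\cont(X,\Z_l(n))_\tors$ is immediate: by Corollaire \ref{c1a} the cokernel of \eqref{eq1} is sans torsion, so every torsion class of the target already lies in the image. It therefore remains to prove $\IM\subseteq H^i_\cont(X,\Z_l(n))_\tors$, which is equivalent to the vanishing of the induced rational cycle map $H^i_\et(X,\Z(n))\otimes\Q_l\to H^i_\cont(X,\Q_l(n))$.

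For this I would first rewrite the source using Théorème \ref{t1} c), which gives $H^i_\et(X,\Z(n))\otimes\Q=H^i_\et(X,\Q(n))=H^i(X,\Q(n))$. The desired vanishing then becomes a statement about weights on the target. Writing $X$ as defined over a finitely generated subfield $k_0$ of $k$ whose separable closure is $k$, and spreading $X_{k_0}$ out so as to specialize to a smooth proper scheme over a finite field, smooth--proper base change together with Deligne's theorem (Weil II) show that $H^i_\et(X,\Q_l)$ is pure of weight $i$; hence $H^i_\cont(X,\Q_l(n))=H^i_\et(X,\Q_l)(n)$ is pure of weight $i-2n$, which is strictly negative since $i<2n$.

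Next I would exploit Galois equivariance. Writing $H^i(X,\Q(n))=\colim_{k'}H^i(X_{k'},\Q(n))$ over the finite subextensions $k_0\subseteq k'\subseteq k$, any class in the image of the rational cycle map is fixed by an open subgroup of $\mathrm{Gal}(k/k_0)$, hence by a suitable power of a geometric Frobenius at a place of good reduction. But on a module pure of negative weight such a Frobenius acts with all eigenvalues of absolute value $<1$, so $1$ is not an eigenvalue and there are no nonzero invariants; therefore the image of the rational cycle map is zero, giving $\IM\subseteq H^i_\cont(X,\Z_l(n))_\tors$ and hence the claimed equality. For $l=\car k$, where $k$ is assumed algebraically closed, the same argument applies with the weights of logarithmic de Rham--Witt (crystalline) cohomology in place of Weil II, relying on the results of Gros invoked in the Définition \ref{d3.1}.

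Finally the two ``in particular'' assertions follow formally. By Corollaire \ref{c1a} the kernel of \eqref{eq1} is $l$-divisible, while its image has just been identified with the torsion group $H^i_\cont(X,\Z_l(n))_\tors$, finite when $l\ne p$; thus $H^i_\et(X,\Z(n))\otimes\Z_l$ is an extension of a torsion group (finite for $l\ne p$) by a divisible group, and tensoring this extension with $\Q_l/\Z_l$ kills both the divisible kernel and the torsion image, whence $H^i_\et(X,\Z(n))\otimes\Q_l/\Z_l=0$. The main obstacle is the weight input of the second and third paragraphs: one must check that the image of \emph{all} of $H^i(X,\Q(n))$, and not merely the codimension-$n$ cycle classes, consists of classes stable under an open subgroup of the Galois group of a field of definition, so that the purity argument applies; and one must separately secure the case $l=p$ through the corresponding crystalline weight formalism.
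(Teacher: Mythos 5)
Your proposal is correct and follows essentially the same route as the paper: reduce via Corollaire \ref{c1a} to showing the image of \eqref{eq1} is torsion, write the motivic cohomology as a colimit over models defined over finitely generated subfields so that the image lands in the invariants of an open subgroup of Galois, then specialize to a finite field by smooth proper base change and invoke the Weil conjectures (Deligne for $l\ne p$, Katz--Messing for $l=p$) to see that these invariants are torsion since $i<2n$. (The paper cites Weil~I rather than Weil~II, which suffices for smooth proper varieties, and handles your final ``obstacle'' exactly by the continuity statement of Th\'eor\`eme \ref{tloc}~a) and \S\ref{nb}.)
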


\begin{proof} 
\'Etant donn\'e le corollaire \ref{c1a}, il suffit de montrer que
\eqref{eq1} a une image de torsion. On reprend les arguments de
Colliot-Th\'el\`ene et Raskind
\cite{ct-r}: d'apr\`es le th\'eor\`eme
\ref{tloc} a) et le
\S
\ref{nb}, on a
\[H^i_\et(X,\Z(n))=\colim_\alpha H^i_\et(X_\alpha,\Z(n))\]
o\`u $X_\alpha$ parcourt un ensemble ordonn\'e filtrant de mod\`eles de $X$ sur des sous-corps
$F_\alpha$ de type fini sur le corps premier. Il suffit donc de savoir que
$H^i(X,\Z_l(n))^G$ est de torsion, o\`u $G$ est le groupe de Galois absolu de
$F_\alpha^{p^{-\infty}}$. On le voit en se ramenant au cas d'un corps de base fini par
changement de base propre et lisse (SGA4 pour $l\ne p$,  Gros-Suwa pour
$l=p$, \cite[p. 590, th. 2.1]{gros-suwa}), o\`u cela r\'esulte
de la d\'emonstration par Deligne de la conjecture de Weil \cite{deligne} pour $l\ne p$ et du
compl\'ement de Katz et Messing
\cite{katz-messing} pour $l=p$.
\end{proof}

\begin{rque} Supposons $n=2$. En utilisant la suite spectrale de coniveau de
la remarque \ref{r1}, on obtient un isomorphisme
$H^i(X,\Z(2))\simeq H^{i-2}(X,\sK_2)$: alors l'\'enonc\'e n'est autre que celui de
Colliot-Th\'el\`ene--Raskind
\cite[th. 1.8 et 2.2]{ct-r} pour $l\ne p$, et de Gros-Suwa \cite[p. 604, cor. 2.2 et p. 605, th.
3.1]{gros-suwa} pour $l=p$.
\end{rque}

\begin{cor}\label{l3.2} Sous ces hypoth\`eses, les homomorphismes
\begin{align*}
H^3(X,\Q_l/\Z_l(2))&\to CH^2(X)\{l\}\\
H^3_\et(X,\Q_l/\Z_l(2))&\to H^4_\et(X,\Z(2))\{l\}
\end{align*}
sont bijectifs.
\end{cor}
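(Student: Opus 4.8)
Le principe est d'identifier les deux homomorphismes aux applications de Bockstein associées aux triangles distingués
\[\Z_X(2)\by{l^r}\Z_X(2)\to (\Z/l^r)_X(2)\by{+1}\]
pris respectivement pour les topologies de Nisnevich et \'etale, puis pass\'es \`a la limite inductive sur $r$. Comme $\Q_l/\Z_l(2)=\colim_r\mu_{l^r}^{\otimes 2}$ (resp. $\colim_r \nu_r(2)[-2]$ si $l=p$) et que l'hypercohomologie commute aux limites inductives filtrantes de ces coefficients born\'es (cf. \S\ref{nb}), les suites des coefficients universels donnent, apr\`es $\colim_r$, des suites exactes courtes
\[0\to H^3(X,\Z(2))\otimes\Q_l/\Z_l\to H^3(X,\Q_l/\Z_l(2))\to CH^2(X)\{l\}\to 0\]
(en utilisant $H^4(X,\Z(2))=CH^2(X)$, remarque \ref{r1}) et
\[0\to H^3_\et(X,\Z(2))\otimes\Q_l/\Z_l\to H^3_\et(X,\Q_l/\Z_l(2))\to H^4_\et(X,\Z(2))\{l\}\to 0,\]
dont les troisi\`emes fl\`eches sont pr\'ecis\'ement les deux applications du corollaire.

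La seconde bijectivit\'e est alors imm\'ediate : la proposition \ref{p4.3} (appliqu\'ee avec $i=3<4=2n$) donne $H^3_\et(X,\Z(2))\otimes\Q_l/\Z_l=0$, de sorte que le terme de gauche de la seconde suite est nul.

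Pour la premi\`ere application, il reste \`a voir que $H^3(X,\Z(2))\otimes\Q_l/\Z_l=0$. J'invoquerais ici Beilinson--Lichtenbaum : le triangle \eqref{eqbl} pour $n=2$ (resp. le th\'eor\`eme \ref{tgl} si $l=p$) montre que $H^i(X,\Z(2))\to H^i_\et(X,\Z(2))$ est un isomorphisme pour $i\le 3$, car $\tau_{\ge 4}R\alpha_*\Z_X(2)_\et$ n'a pas d'hypercohomologie en degr\'es $\le 3$. En particulier $H^3(X,\Z(2))\iso H^3_\et(X,\Z(2))$, et la nullit\'e voulue r\'esulte de nouveau de la proposition \ref{p4.3}.

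L'ingr\'edient substantiel --- en fait le seul --- est la proposition \ref{p4.3}, c'est-\`a-dire la finitude de Colliot-Th\'el\`ene--Raskind reposant sur la d\'emonstration par Deligne de la conjecture de Weil (et Katz--Messing, Gros--Suwa pour $l=p$) ; une fois celle-ci acquise, l'argument est une manipulation formelle de suites de coefficients et de limites inductives. Le seul point qui demande un peu d'attention est que $i=3$ se situe dans le degr\'e limite $n+1$ de Beilinson--Lichtenbaum : il faut v\'erifier que l'application de comparaison y est encore bijective (elle l'est, le tronqu\'e $\tau_{\ge 4}$ ne contribuant rien en degr\'es $<4$), et que le cas $l=p$ est couvert par l'\'enonc\'e \ref{tgl} de Geisser--Levine au lieu de \eqref{eqbl}.
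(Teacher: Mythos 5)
Votre démonstration est correcte et coïncide pour l'essentiel avec celle du texte : les deux flèches sont les morphismes de Bockstein des suites de coefficients universels, le second cas découle directement de la proposition \ref{p4.3} pour $(i,n)=(3,2)$, et le premier s'y ramène via la bijectivité de $H^i(X,\Z(2))\to H^i_\et(X,\Z(2))$ pour $i\le 3$ fournie par \eqref{eqbl} (resp. le théorème \ref{tgl} si $l=p$). Rien à redire.
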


\begin{proof} Pour le second, cela r\'esulte de la suite exacte des coefficients
universels et de la proposition
\ref{p4.3} appliqu\'ee pour
$(i,n)=(3,2)$. Pour le premier, m\^eme raisonnement en utilisant le fait que l'homomorphisme
\[H^i(X,\Z(2))\to H^i_\et(X,\Z(2))\]
est bijectif pour $i\le 3$ par \eqref{eqbl} (qui r\'esulte en poids $2$ du th\'eor\`eme de
Merkurjev-Suslin).
\end{proof}

En particulier, on obtient une injection
\begin{equation}\label{eq3.5} 
H^3_\cont(X,\Z_l(2))\otimes\Q_l/\Z_l\Inj H^4_\et(X,\Z(2)).
\end{equation}

\begin{cor} \label{c4.3} Sous les m\^emes hypoth\`eses, soit $N$ le noyau de l'homomorphisme
$H^4_\et(X,\Z(2))\otimes \Z_l\to H^4_\cont(X,\Z_l(2))$. Alors \eqref{eq3.5} induit un
isomorphisme
\[H^3_\cont(X,\Z_l(2))\otimes\Q_l/\Z_l\iso N_\tors.\]
\end{cor}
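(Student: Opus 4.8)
The plan is to reduce the statement to a computation of the $l$-primary torsion of $H^4_\et(X,\Z(2))\oo\Z_l$ and then to transport the universal coefficient sequence underlying \eqref{eq4.4} through the isomorphism of Corollaire \ref{l3.2}. First I would record the elementary fact that for any abelian group $M$ one has $(M\oo\Z_l)_\tors=M\{l\}$: since $\Z_l$ is flat the torsion of $M\oo\Z_l$ is $M_\tors\oo\Z_l$, the prime-to-$l$ part dies after $\oo\Z_l$, and $M\{l\}\oo\Z_l=M\{l\}$. Applying this to $M=H^4_\et(X,\Z(2))$ and intersecting with $N=\Ker\bigl(H^4_\et(X,\Z(2))\oo\Z_l\by{\cl}H^4_\cont(X,\Z_l(2))\bigr)$ gives
\[N_\tors=\{x\in H^4_\et(X,\Z(2))\{l\}:\cl(x)=0\},\]
the kernel of the cycle class \eqref{eq1} restricted to the $l$-primary torsion subgroup.

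Next I would use Corollaire \ref{l3.2}, which furnishes an isomorphism $\phi\colon H^3_\et(X,\Q_l/\Z_l(2))\iso H^4_\et(X,\Z(2))\{l\}$; concretely $\phi$ is (the $l$-primary part of) the connecting map of the coefficient triangle $\Z_X(2)_\et\to\Q_X(2)_\et\to(\Q/\Z)_X(2)_\et$ in étale motivic cohomology. By construction the injection \eqref{eq3.5} factors as $\phi\circ a'$, where $a'\colon H^3_\cont(X,\Z_l(2))\oo\Q_l/\Z_l\inj H^3_\et(X,\Q_l/\Z_l(2))$ is the central vertical arrow of \eqref{eq4.4}; recall from the discussion preceding Proposition \ref{p4.2} that $a'$ is injective with cokernel $H^4_\cont(X,\Z_l(2))_\tors$, i.e. it is the universal coefficient inclusion whose cokernel map is the Bockstein $\delta\colon H^3_\et(X,\Q_l/\Z_l(2))\to H^4_\cont(X,\Z_l(2))_\tors$ (here $\oo\Q/\Z$ and $\oo\Q_l/\Z_l$ coincide since the source is a $\Z_l$-module). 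Thus $\IM\eqref{eq3.5}=\phi(\IM a')=\phi(\Ker\delta)$, and it remains to identify $\Ker\delta$ with $\Ker(\cl\circ\phi)$.

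The heart of the matter, and the step I expect to be the main obstacle, is the compatibility $\cl\circ\phi=\delta$ under the identification of the common source provided by Théorème \ref{t1}. This should follow formally from the fact that the $l$-adic cycle class is induced by the morphism of complexes \eqref{eq0l}: tensoring the triangle $\Z(2)\to\Q(2)\to\Q/\Z(2)$ against \eqref{eq0l} yields a morphism of coefficient triangles, hence a commutative square relating the motivic connecting map $\phi$, the continuous Bockstein $\delta$, and the comparison isomorphism of Théorème \ref{t1} a) (resp. b) for $l=\car k$) that identifies $H^3_\et(X,\Q_l/\Z_l(2))$ as the source of both. Granting this, $\cl\circ\phi$ equals $\delta$ followed by the inclusion of the finite torsion subgroup $H^4_\cont(X,\Z_l(2))_\tors\inj H^4_\cont(X,\Z_l(2))$; as that inclusion is injective, $\Ker(\cl\circ\phi)=\Ker\delta=\IM a'$.

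Finally I would assemble the pieces. Combining the first paragraph with this kernel computation gives $N_\tors=\phi(\Ker(\cl\circ\phi))=\phi(\IM a')=\IM\eqref{eq3.5}$, so that \eqref{eq3.5}, already known to be injective, restricts to an isomorphism onto $N_\tors$, which is the assertion. The only delicate input is the naturality of the cycle class with respect to the connecting homomorphisms described above; everything else is a diagram chase together with the torsion computation, and the argument is uniform in $l$ (including $l=\car k$, using Théorème \ref{t1} b) and Proposition \ref{p4.3} exactly as in Corollaire \ref{l3.2}).
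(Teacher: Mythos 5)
Your argument is correct and coincides with the (implicit) one the paper intends: Corollaire \ref{c4.3} is stated without proof, and its content is precisely the compatibility of the cycle class \eqref{eq0l} with the two Bockstein/universal-coefficient sequences, which you identify as the key point and justify correctly via the morphism of coefficient triangles. The torsion computation $(M\otimes\Z_l)_\tors=M\{l\}$ and the factorization of \eqref{eq3.5} through the middle column of \eqref{eq4.4} are exactly the details the paper leaves to the reader.
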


\begin{cor}\label{c4.2} Sous les m\^emes hypoth\`eses, on a un isomorphisme canonique:
\[CH^2(X,\Z_l)_\alg\{l\} \iso NH^3_\cont(X,\Z_l(2))\otimes \Q/\Z.\]
\end{cor}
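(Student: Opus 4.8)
The plan is to compare two descriptions of one and the same subgroup of the group of $l$-primary torsion cycle classes that are homologically trivial: one coming from the coniveau filtration on cohomology, packaged in corollaire \ref{c4.10}, and one coming from algebraic equivalence of cycles. The whole point is that these two subgroups coincide.

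First I would pin down the ambient group. By corollaire \ref{l3.2} the motivic Bockstein gives a canonical isomorphism $H^3(X,\Q_l/\Z_l(2))\iso CH^2(X)\{l\}$, and by the compatibility of the motivic and \'etale Bocksteins with the $l$-adic cycle class $\cl^2$ (the same compatibility already used in the proof de la proposition \ref{p2.1}), this isomorphism carries the subgroup $H^3(X,\Q_l/\Z_l(2))^0$ of corollaire \ref{c4.10} onto $\Ker(CH^2(X)\{l\}\by{\cl^2} H^4_\cont(X,\Z_l(2)))$. Writing $K=\Ker(CH^2(X)\otimes\Z_l\to H^4_\cont(X,\Z_l(2)))$, this kernel is exactly $K_\tors$ (a torsion class and its image are automatically torsion), so I obtain a canonical identification $H^3(X,\Q_l/\Z_l(2))^0\iso K_\tors$.

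With this identification in hand, corollaire \ref{c4.10} (where $a$, hence $a^0$, is injective) reads as a short exact sequence $0\to N H^3_\cont(X,\Z_l(2))\otimes\Q/\Z\by{a^0} K_\tors\by{q}\Griff^2(X,\Z_l)_\tors\to 0$, realizing $N H^3_\cont(X,\Z_l(2))\otimes\Q/\Z$ as $\Ker q$. On the cycle side, since cycles algebraically equivalent to zero are homologically trivial we have $CH^2(X,\Z_l)_\alg\subset K$, and by definition $K/CH^2(X,\Z_l)_\alg=\Griff^2(X,\Z_l)$; as $K$ is $l$-divisible (corollaire \ref{c1a}), passing to $l$-primary torsion yields $CH^2(X,\Z_l)_\alg\{l\}=\Ker(q'\colon K_\tors\to\Griff^2(X,\Z_l)\{l\})$, where $q'$ is the tautological projection modulo algebraic equivalence.

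It then suffices to check that the two surjections $q$ and $q'$ from $K_\tors$ to the torsion of $\Griff^2(X,\Z_l)$ coincide; this gives $\Ker q=\Ker q'$ and hence the sought canonical isomorphism $CH^2(X,\Z_l)_\alg\{l\}\iso N H^3_\cont(X,\Z_l(2))\otimes\Q/\Z$. This agreement is the main obstacle. The map $q'$ is the evident quotient, while $q$ is extracted from the proof of corollaire \ref{c4.10}, which itself rests on the identification of the term $E_2^{2,2}$ of the $l$-adic coniveau spectral sequence with $A^2_\alg(X)\otimes\Z_l$ (proposition \ref{p3.1}) and of the associated edge map with the cycle class (corollaire \ref{c3.2}). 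Both $q$ and $q'$ therefore factor through the single description $\Griff^2(X,\Z_l)=\Ker(A^2_\alg(X)\otimes\Z_l\to H^4_\cont(X,\Z_l(2)))$, and the work is to verify, by tracing these identifications through the Bloch--Ogus spectral sequence and the construction of $\cl^2$, that they yield the same homomorphism. This is once more an instance of the naturality of the cycle class that underlies the whole paper; granting it, the equality of kernels is automatic and the isomorphism is canonical.
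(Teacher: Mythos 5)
Your reduction is architecturally sound and genuinely different from the paper's: you realize $CH^2(X,\Z_l)_\alg\{l\}$ and $NH^3_\cont(X,\Z_l(2))\otimes\Q/\Z$ as kernels of two surjections $q,q'\colon K_\tors\to\Griff^2(X,\Z_l)_\tors$ and want to compare them, whereas the paper never passes through le corollaire \ref{c4.10}: it assembles a single commutative diagram from the coniveau sequence \eqref{eq3.10} for the complex $K(2)$, the sequence of the corollaire \ref{c3.2}, the vertical sequence coming from the proposition \ref{p4.3} and the isomorphism of the lemme \ref{l2.2}, applies the snake lemma there, and concludes by the proposition \ref{p2.2} (unique divisibility of $H^1(X,\sH^2(K(2)))$) and the Tor sequence. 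Your preliminary steps are essentially fine: the identification $H^3(X,\Q_l/\Z_l(2))^0\iso K_\tors$ does follow from the corollaire \ref{l3.2} together with the compatibility of the motivic and continuous Bocksteins through \eqref{eq0l} (via the proposition \ref{ph1}), and $CH^2(X,\Z_l)_\alg\{l\}=\Ker q'$ needs only left exactness of the torsion functor --- the appeal to $l$-divisibility of $K$ is unnecessary, and note that the corollaire \ref{c1a} gives divisibility of $K_\et$, not of $K$.

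The genuine gap is the equality $q=q'$, which you explicitly defer, and which is not a routine naturality statement. The map $q'$ is the tautological projection modulo algebraic equivalence, while $q$ is the composite of the inverse of a universal-coefficient isomorphism, the projection onto $\Coker a^0$, the inverse of the snake-lemma connecting map of the diagram \eqref{eq4.4}, and the inverse of the Tor-connecting map hidden in the proposition \ref{l4.2} --- which in turn rests on the identification of $E_2^{2,2}$ of the Bloch--Ogus spectral sequence with $A^2_\alg(X)\otimes\Z_l$ and of the relevant $d_2$ with the cycle class map. Unwinding those four identifications to recover the tautological projection is the actual mathematical content of the corollary; it is the analogue of the step where the paper checks that ``$\phi$ est la projection naturelle'', and the paper's recourse to the $K(2)$-diagram is precisely what turns that check into a short computation rather than a chase through several connecting maps living in different exact sequences. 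As written, your argument only shows that both groups occur as kernels of surjections from $K_\tors$ onto $\Griff^2(X,\Z_l)_\tors$, which by itself does not even imply that they are abstractly isomorphic; the proof is therefore incomplete at its decisive step.
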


\begin{proof} On va r\'eutiliser le complexe $K(2)$ de la d\'efinition  \ref{d3.1}.
Consid\'erons le diagramme commutatif de suites exactes:

\[\begin{CD}
&&&& \s 0\\
&&&& @VVV\\
&&&& \s H^3_\cont(X,\Z_l(2))_\tors\\
&&&& @VVV\\
\s 0&\to& \s NH^3_\cont(X,\Z_l(2))&\to& \s H^3_\cont(X,\Z_l(2))&\to& \s H^0(X,\sH^3_\cont(\Z_l(2)))&\to& \s \Griff^2(X,\Z_l) &\to& \s 0\\
&&@V{\theta}VV @VVV @V{\wr}VV\\
\s 0&\to& \s H^1(X,\sH^2(K(2)))&\to& \s H^3(X,K(2))&\to& \s H^0(X,\sH^3_\cont(K(2))&\to&\s 0\\
&&&& @VVV\\
&&&& \s CH^2(X,\Z_l)_\hom\\
&&&& @VVV\\
&&&& \s 0
\end{CD}\]
o\`u $CH^2(X,\Z_l)_\hom$ est le noyau de la classe de cycle sur $CH^2(X)\otimes \Z_l$.
L'exactitude \`a gauche de la suite verticale d\'ecoule de la proposition \ref{p4.3}, celle de
la premi\`ere suite horizontale du corollaire \ref{c3.2}, la seconde suite horizontale est
\eqref{eq3.10}, enfin  l'isomorphisme vertical est le lemme \ref{l2.2}. La fl\`eche $\theta$ est induite par le diagramme.

Tout d'abord, le lemme \ref{l3.4} implique via ce diagamme que 
\[NH^3_\cont(X,\Z_l(2))_\tors\iso H^3_\cont(X,\Z_l(2))_\tors.\] 

Appliquons maintenant le lemme du serpent aux deux suites exactes horizontales: on obtient un isomorphisme
\[\Ker \theta \iso H^3_\cont(X,\Z_l(2))_\tors\]
et une suite exacte
\[0\to \Coker \theta\to CH^2(X,\Z_l)_\hom\by{\phi} \Griff^2(X,\Z_l)\to 0\]
et on calcule que $\phi$ est la projection naturelle. Finalement on obtient une suite exacte
\[0\to NH^3_\cont(X,\Z_l(2))/\tors \to H^1(X,\sH^2(K(2)))\to CH^2(X,\Z_l)_\alg\to 0.\]
et l'isomorphisme du corollaire d\'ecoule maintenant de la proposition \ref{p2.2} et de la
suite exacte des Tor \`a coefficients $\Q/\Z$.
\end{proof}

\begin{rque} Si $k$ est la cl\^oture alg\'ebrique d'un corps fini, le groupe $CH^n(X,\Z_l)_\alg$
est de torsion pour toute $k$-vari\'et\'e projective lisse $X$ et tout $n\ge 0$ (r\'eduction au
cas d'une courbe par l'argument de correspondances de Bloch, cf. \cite[preuve de la prop.
2.7]{schoen1}). En particulier, le corollaire \ref{c4.2} d\'ecrit le groupe $CH^2(X,\Z_l)_\alg$
tout entier. (Voir aussi \S \ref{s5.3}.)
\end{rque}

\section{Cas d'un corps de base fini et de sa cl\^oture alg\'ebrique} 

\subsection{Cas d'un corps fini}\label{fini}  Soit $k$ un corps fini. Rappelons d'abord la classe $B_\tate(k)$ de \cite[d\'ef. 1 b)]{cell}

\begin{defn} Une $k$-vari\'et\'e projective lisse $X$ est dans $B_\tate(k)$ si
\begin{thlist}
\item Il existe une $k$-vari\'et\'e ab\'elienne $A$ et une extension finie $k'/k$ telles que le motif de Chow de $X_{k'}$ \`a coefficients rationnels soit facteur direct  de celui de $A_{k'}$.
\item $X$ v\'erifie la conjecture de Tate (sur l'ordre des p\^oles de $\zeta(X,s)$ aux entiers $\ge 0$).
\end{thlist}
\end{defn}

On sait montrer qu'\'etant donn\'e (i), (ii) est cons\'equence de (donc \'equivalent \`a) la conjecture de Tate cohomologique pour la cohomologie $l$-adique, pour un nombre premier $l$ donn\'e pouvant \^etre \'egal \`a la caract\'eristique de $k$ (cela r\'esulte de \cite[lemme 1.9]{cell}, cf. \cite[rem. 3.10]{ctk}).

Consid\'erons les notations de la preuve de la
proposition \ref{p2.1}. Si $k$ est fini et si $X\in B_\tate(k)$, $K_\et$ et $C_\et$ sont finis (ibid., th. 3.6 et lemme 3.7), donc
$K=K_\et=C_\et=0$ et  \eqref{eq2a} devient un isomorphisme
\begin{equation}\label{eq3.0}
H^0(X,\sH^3_\et(\Q_l/\Z_l(2)))\iso C.
\end{equation}

En particulier, $H^0(X,\sH^3_\et(\Q_l/\Z_l(2)))$ est fini et $H^0(X,\sH^3_\cont(\Z_l(2)))\otimes \Q/\Z=0$ (th\'eor\`eme \ref{p2et}). En r\'ealit\'e,
m\^eme le grou\-pe $H^0(X,\sH^3_\et(\Q/\Z(2)))$ est fini: cela r\'esulte de la proposition
\ref{p1} et de la g\'en\'eration finie de $H^4_\et(X,\Z(2))$ \cite[cor. 3.8 c) et e)]{cell}.

Conjecturalement, toute vari\'et\'e projective lisse est dans $B_\tate(k)$.

\subsection{Cas de la cl\^oture alg\'ebrique d'un corps fini} Le but de ce num\'ero est de
d\'emontrer:

\begin{thm}\label{t3.1} Soient $k$ la cl\^oture alg\'ebrique d'un corps fini $k_0$, $X_0\in
B_\tate(k_0)$, $X=X_0\otimes_{k_0} k$ et $l\ne \car k$. Alors\\
a) $\Griff^2(X,\Z_l)$ est de torsion.\\
b) On a une suite exacte
\begin{multline*}
0\to \Griff^2(X,\Z_l)\to H^3_\tr(X,\Z_l(2))\otimes \Q/\Z\\
\to H^0(X,\sH^3_\cont(\Z_l(2)))\otimes\Q/\Z\to 0.
\end{multline*}
c) Si $H^3_\tr(X,\Q_l(2))=0$, le groupe $H^0(X,\sH^3(\Q_l/\Z_l(2)))$ est fini; dans ce cas, il
est isomorphe \`a $C_\tors$.
\end{thm}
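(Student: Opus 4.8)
La partie a) est le c\oe ur de l'\'enonc\'e; b) et c) s'en d\'eduisent formellement \`a l'aide de la proposition \ref{l4.2} et du th\'eor\`eme \ref{p2et}. Pour a), le plan est de montrer que $\Griff^2(X,\Z_l)\otimes\Q=0$. D'apr\`es le corollaire \ref{c3.2} et la proposition \ref{p3.1}, $\Griff^2(X,\Z_l)$ s'identifie \`a l'image de la diff\'erentielle $d_2\colon E_2^{0,3}\to E_2^{2,2}$ de la suite spectrale de coniveau pour la cohomologie $l$-adique, avec $E_2^{0,3}=H^0(X,\sH^3_\cont(\Z_l(2)))$ et $E_2^{2,2}\simeq A^2_\alg(X)\otimes\Z_l$. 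Comme $X=X_0\otimes_{k_0}k$, toute la suite spectrale provient par changement de base d'un sous-corps fini $k_1\subset k$ et $d_2$ est donc $Gal(k/k_1)$-\'equivariante. Le but $E_2^{2,2}\otimes\Q_l$ est un sous-quotient du module de permutation $E_1^{2,2}=\bigoplus_{x\in X^{(2)}}\Q_l(0)$ (les puissances de Frobenius fixant chaque point, les valeurs propres sont des racines de l'unit\'e), donc pur de poids $0$ d'apr\`es Deligne. D'autre part $E_2^{0,3}\otimes\Q_l\subset E_1^{0,3}=H^3(k(X),\Q_l(2))$ est \`a poids $\ge -1$ (poids des vari\'et\'es lisses, Weil II).

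Une application $Gal(k/k_1)$-\'equivariante n'envoie que la partie de poids $0$ de la source dans la partie de poids $0$ (pure) du but; par cons\'equent $\IM(d_2)\otimes\Q_l=d_2(\gr^W_0 E_2^{0,3}\otimes\Q_l)$, et il suffit de voir que $E_2^{0,3}\otimes\Q_l$ ne contient aucune partie de poids $0$ (ce qui \'equivaut \`a a)). C'est ici qu'intervient de fa\c con essentielle l'hypoth\`ese $X_0\in B_\tate(k_0)$: elle fournit (remarque \ref{r4.3}, via la nilpotence de Kimura et la semi-simplicit\'e de Jannsen) une d\'ecomposition de Chow-K\"unneth raffin\'ee dans laquelle le cran de coniveau $\le 1$ de $h^3(X)$ est alg\'ebrique et dans laquelle l'\'equivalence homologique co\"\i ncide rationnellement avec l'\'equivalence num\'erique; ceci exclut l'apparition d'une partie de poids $0$ dans la cohomologie non ramifi\'ee $E_2^{0,3}$, donc donne $d_2\otimes\Q_l=0$ et la torsion de $\Griff^2(X,\Z_l)$. \emph{Je m'attends \`a ce que ce soit l\`a l'obstacle principal}: les seuls poids ne suffisent pas (le groupe de Griffiths vit exactement au bord entre les poids $-1$ et $0$), et les exemples de Schoen montrent que $\Griff^2(X,\Q_l)\ne 0$, donc qu'une partie de poids $0$ appara\^\i t effectivement, hors de $B_\tate$. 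Il faut donc r\'eellement exploiter la d\'ecomposition motivique et la semi-simplicit\'e; au moins pour $\dim X=3$, le m\'ecanisme est que $\Griff^2(X,\Q_l)$ s'identifie dans $\sA$ \`a $\Hom(h^{3,0}(X),\L)\otimes\Q_l$, groupe de morphismes entre objets purs de poids $3$ et $2$, que la semi-simplicit\'e num\'erique annule.

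Une fois a) acquise, b) est imm\'ediate: $\Griff^2(X,\Z_l)$ \'etant de torsion, on a $\Griff^2(X,\Z_l)\otimes\Q/\Z=0$ (un groupe de torsion tensoris\'e par le groupe divisible $\Q/\Z$ est nul) et $\Griff^2(X,\Z_l)_\tors=\Griff^2(X,\Z_l)$, de sorte que la suite exacte \`a quatre termes de la proposition \ref{l4.2} se r\'eduit exactement \`a la suite exacte courte annonc\'ee.

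Pour c), le lemme \ref{l3.4a} donne que $H^3_\tr(X,\Z_l(2))$ est un $\Z_l$-module de type fini sans torsion; l'hypoth\`ese $H^3_\tr(X,\Q_l(2))=0$ force donc $H^3_\tr(X,\Z_l(2))=0$, puis $H^3_\tr(X,\Z_l(2))\otimes\Q/\Z=0$, et b) fournit $H^0(X,\sH^3_\cont(\Z_l(2)))\otimes\Q/\Z=0$. Le th\'eor\`eme \ref{p2et} donne alors un isomorphisme $H^0(X,\sH^3_\et(\Q_l/\Z_l(2)))\iso C_\tors$. Enfin, $k$ \'etant s\'eparablement clos et $l\ne\car k$, le groupe $H^4_\cont(X,\Z_l(2))$ est un $\Z_l$-module de type fini; son quotient $C$, et a fortiori $C_\tors$, est donc fini, ce qui ach\`eve c).
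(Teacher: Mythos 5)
Your parts b) and c) are correct and coincide with the paper's proof: both are formal consequences of a) via the proposition \ref{l4.2}, the lemma \ref{l3.4a} and the th\'eor\`eme \ref{p2et}, together with the finite generation of $H^4_\cont(X,\Z_l(2))$ over a separably closed field. The problem is a), which is indeed the heart of the matter, and there your argument has a genuine gap at its decisive step. Your reduction of a) to the vanishing of the weight-$0$ part of $E_2^{0,3}\otimes\Q_l$ is essentially a tautology: by the corollaire \ref{c3.2}, $E_2^{0,3}\otimes\Q_l$ is an extension of $\Griff^2(X,\Q_l)$ (which sits in the weight-$0$ group $E_2^{2,2}\otimes\Q_l$) by $H^3_\tr(X,\Q_l(2))$ (pure of weight $-1$), so ``no weight-$0$ part'' is literally the statement $\Griff^2(X,\Q_l)=0$ that you are trying to prove. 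The step where you then ``exclude'' this weight-$0$ part via the refined Chow--K\"unneth decomposition and Jannsen semisimplicity does not work: semisimplicity controls Hom groups of \emph{numerical} motives, whereas $\Griff^2$ lives in the kernel of $\Hom_{\Chow}\to\Hom_{\hom}$; the group $\Hom(h^{3,0}(X),\L)$ of the remarque \ref{r4.3} is a Hom of \emph{Chow} motives and is not annihilated by purity-of-weights or semisimplicity considerations. A decisive sanity check: your argument uses nothing about the base field beyond the motivic hypotheses, yet abelian threefolds over $\C$ satisfy Kimura finiteness and Lieberman--Kleiman and still have non-torsion Griffiths groups; any correct proof must therefore use the arithmetic of finite fields in an essential way, and yours does not.

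The missing ingredients are exactly those of the paper's proof of a): for every finite extension $k_1/k_0$ and $X_1=X_0\otimes_{k_0}k_1$, the hypothesis $X_0\in B_\tate(k_0)$ gives (via \cite[th. 3.6 et cor. 3.8 e)]{cell}) that $H^4_\et(X_1,\Z(2))\otimes\Z_l\to H^4_\cont(X_1,\Z_l(2))$ is bijective, hence by \eqref{eqlk} that $CH^2(X_1)\otimes\Z_l\to H^4_\cont(X_1,\Z_l(2))$ is injective; the Hochschild--Serre sequence \eqref{eq3.2} and Weil I then show that the kernel of $H^4_\cont(X_1,\Z_l(2))\to H^4_\cont(X,\Z_l(2))$ is the finite group $H^1(G_1,H^3_\cont(X,\Z_l(2)))$, so the kernel of $CH^2(X_1)\otimes\Z_l\to H^4_\cont(X,\Z_l(2))$ is finite for each $k_1$, and passing to the limit over $k_1$ shows that $\Griff^2(X,\Z_l)$ is torsion. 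None of these inputs --- the Tate conjecture as a statement about cycle class maps over the finite subfields, the Weil I finiteness of $H^1$ of Frobenius, and the limit over models --- appears in your argument, and without them the conclusion cannot be reached.
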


\begin{rque}  Le corollaire \ref{c4.10} donne une autre description de
$\Griff^2(X,\Z_l)$.
\end{rque}

\begin{proof} a) $\Rightarrow$ b) par la proposition \ref{l4.2} et b) $\Rightarrow$ c) par le
th\'eor\`eme
\ref{p2et}. Montrons a). Soit $k_1$ une extension finie de
$k_0$, et $X_1=X_0\otimes_{k_0} k_1$. D'apr\`es \cite[th. 3.6 et cor. 3.8 e)]{cell},
l'homomorphisme 
\[H^4_\et(X_1,\Z(2))\otimes \Z_l\to H^4_\cont(X_1,\Z_l(2))\]
est bijectif. D'apr\`es \eqref{eqlk}, l'homomorphisme
\[CH^2(X_1)\otimes\Z_l\to H^4_\cont(X_1,\Z_l(2))\]
est donc injectif. Or dans la suite exacte
\begin{multline}\label{eq3.2}
0\to H^1(G_1,H^3_\cont(X,\Z_l(2)))\to H^4_\cont(X_1,\Z_l(2))\\
\to H^4_\cont(X,\Z_l(2))^{G_1}\to 0
\end{multline}
(o\`u $G_1=Gal(k/k_1)$), le groupe de gauche est fini d'apr\`es Weil I \cite{deligne}. Il en
r\'esulte que le noyau de
\[CH^2(X_1)\otimes\Z_l\to H^4_\cont(X,\Z_l(2))\]
est fini pour tout $k_1$, d'o\`u la conclusion en passant \`a la limite.
\end{proof}

\subsection{Un exemple de Schoen}\label{s5.3} J'avais initialement pens\'e que la r\'eciproque
du th\'eor\`eme \ref{t3.1} c) est vraie. En r\'ealit\'e elle est fausse: cela r\'esulte d'un calcul de C.
Schoen \cite{schoen1}. Dans cet article, Schoen consid\`ere $X=E^3$ sur $k=\bar \F_p$, o\`u $E$
est la courbe elliptique d'\'equation $x^3+y^3+z^3=0$, et montre que, si $p\equiv 1\pmod{3}$:
\[\Griff^2(X)\{l\} \simeq (\Q_l/\Z_l)^2\]
pour $l\equiv -1\pmod{3}$ \cite[th. 0.1]{schoen1}. Le groupe $\Griff^2(X)$ est d\'efini comme
le quotient du groupe des cycles \`a coefficients entiers qui sont homologiquement
\'equivalents \`a z\'ero par le sous-groupe de ceux qui sont al\-g\'e\-bri\-que\-ment
\'equivalents \`a z\'ero. Commen\c cons par clarifier le lien entre ce groupe et
$\Griff^2(X,\Z_l)$:

\begin{prop}\label{p5.1} Soient $k_0$ un corps fini de cl\^oture alg\'ebrique $k$, $X_0\in
B_\tate(k_0)$ et
$X=X_0\otimes_{k_0} k$. Alors, pour tout $n\ge 0$, l'homomorphisme
\'e\-vi\-dent
\[\Griff^n(X)\otimes \Z_l\to \Griff^n(X,\Z_l)\]
est bijectif.
\end{prop}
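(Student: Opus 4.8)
The plan is to compare the two Griffiths groups through the diagram of cycle class maps and reduce the bijectivity to facts already established. Recall the two relevant definitions: $\Griff^n(X)$ is the quotient of homologically trivial cycles (with integral coefficients) by algebraically trivial ones, while $\Griff^n(X,\Z_l)$ is defined in Définition \ref{d4.1} as the kernel of $\cl^n: A^n_\alg(X)\otimes\Z_l\to H^{2n}_\cont(X,\Z_l(n))$. The natural comparison map arises because an algebraically trivial integral cycle whose $l$-adic class vanishes determines, after tensoring with $\Z_l$, an element of that kernel. I would first set up the commutative square relating $\Griff^n(X)\otimes\Z_l$ to $\Griff^n(X,\Z_l)$ via the cycle class maps with $\Z$ and $\Z_l$ coefficients, respectively, using the factorization of $\cl^n$ through algebraic equivalence recorded just before Définition \ref{d4.1}.

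First I would exploit the hypothesis $X_0\in B_\tate(k_0)$: by Proposition \ref{p4.3} (applied with $(i,n)=(2n,n)$, noting $i<2n$ fails here, so rather I invoke the finiteness results of \cite{cell}) and the analysis in \S\ref{fini}, over a finite base the cycle class map $H^{2n}_\et(X_1,\Z(n))\otimes\Z_l\to H^{2n}_\cont(X_1,\Z_l(n))$ is essentially bijective up to finite groups. The key structural input is that over $k=\bar k_0$ the group $A^n_\alg(X)\otimes\Q_l/\Z_l$ and the analogous rational comparisons are controlled, exactly as in the proof of Theorem \ref{t3.1}. The crux is that $A^n_\alg(X)$ is finitely generated modulo torsion and that its image under the cycle class map into $H^{2n}_\cont(X,\Z_l(n))$ lands in the torsion subgroup over the algebraic closure of a finite field.

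The second step is to show that tensoring with $\Z_l$ commutes with forming the kernel here, i.e.\ that $\Griff^n(X)\otimes\Z_l\to\Griff^n(X,\Z_l)$ is both injective and surjective. For injectivity I would use that the kernel of $A^n_\alg(X)\to A^n_\alg(X)\otimes\Z_l$ consists of $l$-divisible elements (via the finite generation of $A^n_\num(X)$ and the finiteness of homological-versus-numerical discrepancy over $\bar\F_p$), while any homologically trivial class becoming zero in $\Griff^n(X,\Z_l)$ must already have been algebraically trivial up to $l$-power torsion. For surjectivity I would lift an element of the $\Z_l$-kernel to an integral algebraically trivial cycle, using that $A^n_\alg(X)$ is finitely generated (over $\bar\F_p$, after descent to a finite field and invoking finite generation of $\NS$-type groups and their higher analogues) so that $A^n_\alg(X)\otimes\Z_l$ is the $l$-adic completion, with no new kernel elements introduced by the tensor product.

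I expect the main obstacle to be the surjectivity, specifically controlling the interaction between the integral cycle class map and its $l$-adic completion: one must verify that every class in $\ker(A^n_\alg(X)\otimes\Z_l\to H^{2n}_\cont)$ already comes from an honest integral homologically trivial cycle rather than only from a $\Z_l$-linear combination. This is where the hypothesis $X_0\in B_\tate(k_0)$ is essential, because it forces the relevant cohomology groups to be finitely generated and the cycle class maps to have finite kernel and cokernel (by \cite[th. 3.6 et cor. 3.8]{cell}); over a general separably closed field the comparison could fail. Once finite generation of $A^n_\alg(X)$ modulo torsion and the torsion-ness of the image in continuous cohomology are in hand, the bijectivity follows from a routine diagram chase combined with the exactness of $-\otimes\Z_l$ on finitely generated groups, so I would organize the write-up to isolate these finiteness inputs first and relegate the chase to the end.
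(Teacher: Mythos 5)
Your surjectivity argument rests on the finite generation of $A^n_\alg(X)$ (or of $A^n_\alg(X)$ modulo torsion), and this is exactly where the proof breaks down: that finite generation is not known for $n\ge 2$, and it actually fails in the situation the proposition is designed for. In Schoen's example (th\'eor\`eme \ref{t5.1}) one has $\Griff^2(X)\{l\}\simeq(\Q_l/\Z_l)^2$, and $\Griff^2(X)$ is a subgroup of $A^2_\alg(X)$; so $A^2_\alg(X)$ contains a divisible torsion group and is nowhere near finitely generated. Consequently the picture of $A^n_\alg(X)\otimes\Z_l$ as an ``$l$-adic completion with no new elements'' is unavailable. Moreover ``exactness of $-\otimes\Z_l$ on finitely generated groups'' is a red herring: $\Z_l$ is flat over $\Z$, so injectivity of $\Griff^n(X)\otimes\Z_l\to\Griff^n(X,\Z_l)$ is immediate from flatness applied to the inclusion $\Griff^n(X)\subset A^n_\alg(X)$, with no need for your considerations about $l$-divisible kernels.

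The step you are missing is the one that actually produces surjectivity. The paper first checks that $CH^*(X)\otimes A\to CH^*(X,A)$ and $A^*_\alg(X)\otimes A\to A^*_\alg(X,A)$ are bijective for an \emph{arbitrary} coefficient group $A$, by exhibiting $A^n_\alg(X,A)$ as the cokernel of an explicit presentation indexed by pointed varieties $(V,v_0,v_1)$ (tensoring commutes with cokernels --- no finiteness needed). Then, given $x\in CH^n(X,\Z_l)_\hom$, it descends $x$ to a model $X_1$ over a finite extension $k_1$ of $k_0$ and uses $X_0\in B_\tate(k_0)$ exactly as in the proof of th\'eor\`eme \ref{t3.1}: the cycle class $CH^n(X_1)\otimes\Z_l\to H^{2n}_\cont(X_1,\Z_l(n))$ is injective and $H^1(G_1,H^{2n-1}_\cont(X,\Z_l(n)))$ is finite by Weil I, so the kernel at each finite level is finite and in the limit $x$ is a torsion element of $CH^n(X)\otimes\Z_l$. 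The elementary identity $(M\otimes\Z_l)\{l\}=M\{l\}$ (valid because $(M/M\{l\})\otimes\Z_l$ has no $l$-torsion) then shows that this torsion class comes from an honest integral homologically trivial cycle. It is this ``everything in the kernel is torsion, and torsion descends integrally'' mechanism --- not finite generation --- that gives surjectivity, and it works even though the torsion in question can be of infinite corank, as in Schoen's example.
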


\begin{proof} Soit $A$ un groupe ab\'elien quelconque. Pour une vari\'et\'e lisse $X$ sur un
corps quelconque, on peut d\'efinir les cycles \`a coefficients dans $A$ modulo l'\'equivalence
rationnelle, ou alg\'ebrique. Notons ces groupes $CH^*(X,A)$ et $A^*_\alg(X,A)$. Je dis que les
homomorphismes
\begin{align*}
CH^*(X)\otimes A &\to CH^*(X,A)\\
A^*_\alg(X)\otimes A &\to A^*_\alg(X,A)
\end{align*}
sont bijectifs: par exemple on peut d\'ecrire
$A^n_\alg(X,A)$ comme le conoyau d'un homomorphisme
\[\bigoplus_{(V,v_0,v_1)}\bigoplus_{x\in ((X\times V)^{(n)})'}  A\by{s_0^*-s_1^*}
\bigoplus_{x\in X^{(n)}} A\]
o\`u $(V,v_0,v_1)$ d\'ecrit l'ensemble des classes d'isomorphismes de $k$-vari\'et\'es lisses
$V$ munies de deux points rationnels $v_0$ et $v_1$.

Pla\c cons-nous maintenant dans la situation de la proposition. Notons $CH^n(X)_\hom$ le
noyau de
$\cl^n:CH^n(X)\to H^{2n}_\cont(X,\Z_l(n))$. Je dis que l'isomorphisme
\[CH^n(X)\otimes \Z_l\iso CH^n(X,\Z_l)\]
envoie $CH^n(X)_\hom\otimes \Z_l$ sur $CH^n(X,\Z_l)_\hom$. En effet, soit  $x\in\allowbreak
CH^n(X,\Z_l)_\hom$. \'Ecrivant $x=\sum \alpha_i x_i$ avec $\alpha_i\in \Z_l$, $x_i\in
CH^n(X)$, on peut (quitte \`a augmenter $k_0$) supposer que $x$ provient de
$x_0\in CH^n(X_0,\Z_l)$. On a \'evidemment $x_0\in CH^n(X_0,\Z_l)_\hom$; le m\^eme raisonnement
que dans la preuve du th\'eor\`eme \ref{t3.1} (utilisant le fait que $X_0\in
B_\tate(k_0)$) montre alors que $x_0$ est de torsion. Mais, pour
tout groupe ab\'elien $M$, on a des isomorphismes
\[M\{l\}\iso M\{l\}\otimes \Z_l\iso (M\otimes \Z_l)\{l\}\]
puisque $(M/M\{l\})\otimes \Z_l$ est sans $l$-torsion. Donc $x_0\in CH^n(X_0)_\hom$ et $x\in
CH^n(X)_\hom$.

Il r\'esulte de ceci que l'homomorphisme induit
\[\Griff^n(X)\otimes \Z_l=A^n_\alg(X)_\hom\otimes \Z_l\to
A^n_\alg(X,\Z_l)_\hom=\Griff^n(X,\Z_l)\] 
est surjectif, donc bijectif, d'o\`u l'\'enonc\'e.
\end{proof}

\begin{prop} \label{p5.2} Sous les hypoth\`eses de la proposition \ref{p5.1}, les conditions
suivantes sont \'equivalentes:
\begin{thlist}
\item $H^0(X,\sH^3(\Q_l/\Z_l(2)))$ est fini.
\item Le monomorphisme $\Griff^2(X,\Z_l)\to H^3_\tr(X,\Z_l(2))\otimes \Q_l/\Z_l$ du
th\'e\-o\-r\`e\-me
\ref{t3.1} b) est surjectif.
\item $\corg \Griff^2(X,\Z_l)\ge \dim  H^3_\tr(X,\Q_l(2))$.
\item L'application $b$ de \eqref{eq4.4} est nulle.
\item $H^3_\tr(X,\Q_l/\Z_l(2))$ est fini, quotient de $H^4_\cont(X,\Z_l)_\tors$.
\end{thlist}
Si $X$ est une vari\'et\'e ab\'elienne (il suffit que $H^3(X,\Z_l)$ et $H^4(X,\Z_l)$ soient
sans torsion), ces conditions sont encore \'equivalentes \`a:
\begin{thlist}
\item[\rm (vi)] Pour tout $n\ge 1$, l'homomorphisme $H^3_\et(X,\Z/l^n)\to H^3_\et(k(X),\Z/l^n)$
est \emph{nul}.
\end{thlist}
\end{prop}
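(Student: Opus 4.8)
The plan is to prove the chain of equivalences (i)$\iff$(ii)$\iff$(iii) and (ii)$\iff$(iv)$\iff$(v) for a general $X$ as in Proposition~\ref{p5.1}, and then treat (vi) separately under the torsion-freeness hypothesis. Throughout I write $\rho=\dim H^3_\tr(X,\Q_l(2))$; by Lemma~\ref{l3.4a} the module $H^3_\tr(X,\Z_l(2))$ is free of rank $\rho$, so $H^3_\tr(X,\Z_l(2))\otimes\Q/\Z\simeq(\Q_l/\Z_l)^\rho$, while by Theorem~\ref{t3.1}~a) the group $\Griff^2(X,\Z_l)$ is torsion, hence an Artinian $\Z_l$-module $(\Q_l/\Z_l)^s\oplus(\text{fini})$ with $s=\corg\Griff^2(X,\Z_l)$. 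Since $X$ is projective over a separably closed field, $H^4_\cont(X,\Z_l(2))$ is finitely generated, so the group $C_\tors$ of Theorem~\ref{p2et} is finite. With these preliminaries, (i)$\iff$(ii) goes as follows: by Theorem~\ref{p2et} and the finiteness of $C_\tors$, condition (i) is equivalent to finiteness of $H^0(X,\sH^3_\cont(\Z_l(2)))\otimes\Q/\Z$, which by Theorem~\ref{t3.1}~b) is the cokernel of the injection $\Griff^2(X,\Z_l)\hookrightarrow(\Q_l/\Z_l)^\rho$; being a quotient of a divisible group it is divisible, so it is finite iff it vanishes, i.e. iff (ii) holds. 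For (ii)$\iff$(iii) I would use that corank is monotone along that injection, giving $\corg\Griff^2(X,\Z_l)\le\rho$ always; thus (iii) forces equality, whence the divisible part of $\Griff^2(X,\Z_l)$ has full corank in $(\Q_l/\Z_l)^\rho$ and is therefore all of it, which is (ii). The converse is immediate.

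Next, (ii)$\iff$(iv) follows from Corollary~\ref{c4.10}, where $\Ker b$ is identified with $\Griff^2(X,\Z_l)_\tors=\Griff^2(X,\Z_l)$, this kernel being exactly the image of the monomorphism of (ii); hence that monomorphism is surjective iff $\Ker b=H^3_\tr(X,\Z_l(2))\otimes\Q/\Z$, iff $b=0$. For (iv)$\iff$(v) I would read off Proposition~\ref{p4.2}: when $b=0$ one gets $\Coker b\simeq H^3_\tr(X,\Q_l/\Z_l(2))$, a quotient of the finite group $H^4_\cont(X,\Z_l(2))_\tors$, which yields the precise form of (v); conversely, if $H^3_\tr(X,\Q_l/\Z_l(2))$ is finite then the image of the divisible source $(\Q_l/\Z_l)^\rho$ under $b$ is a finite divisible group, hence zero.

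For the torsion-free case I would establish (v)$\iff$(vi). Restriction to the generic point identifies $H^3_\tr(X,\Z/l^n(2))$ with the image of $H^3_\et(X,\Z/l^n)$ in $H^3_\et(k(X),\Z/l^n)$ (over the algebraically closed $k$ the twist is harmless), so (vi) means $H^3_\tr(X,\Z/l^n(2))=0$ for all $n$. Since $H^3_\tr(X,\Q_l/\Z_l(2))=\colim_n H^3_\tr(X,\Z/l^n(2))$, the crux is that each comparison map $\phi_n\colon H^3_\tr(X,\Z/l^n(2))\to H^3_\tr(X,\Q_l/\Z_l(2))$ is injective: granting this, vanishing of the colimit is equivalent to vanishing of every term. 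To prove $\phi_n$ injective I would exploit torsion-freeness of $H^3(X,\Z_l)$ to make $H^2(X,\Q_l/\Z_l)$ divisible, hence $H^3_\et(X,\Z/l^n)\hookrightarrow H^3_\et(X,\Q_l/\Z_l)$; and Merkurjev–Suslin to make $H^2(k(X),\Q_l/\Z_l(2))\simeq\colim_n K_2^M(k(X))/l^n$ divisible, hence $H^3_\et(k(X),\Z/l^n)\hookrightarrow H^3_\et(k(X),\Q_l/\Z_l)$. Comparing these two injections shows $NH^3(X,\Z/l^n(2))$ is exactly the preimage of $NH^3(X,\Q_l/\Z_l(2))$, giving injectivity of $\phi_n$. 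Finally, torsion-freeness of $H^4(X,\Z_l)$ makes the central arrow of \eqref{eq4.4} an isomorphism and $b$ surjective, so $H^3_\tr(X,\Q_l/\Z_l(2))$ is divisible and (v) reduces to its vanishing, matching (vi).

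The step I expect to be the main obstacle is precisely the injectivity of $\phi_n$, i.e. controlling the interaction of the coniveau filtration with reduction modulo $l^n$ at the generic point. This is where the two torsion-freeness hypotheses and Merkurjev–Suslin (through divisibility of $H^2(k(X),\Q_l/\Z_l(2))$) are indispensable: without them, extra coniveau classes could appear at finite level, $\phi_n$ would fail to be injective, and a finite colimit would no longer force termwise vanishing, breaking the equivalence with (vi).
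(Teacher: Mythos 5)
Your proof is correct. For the equivalences (i)--(v) you follow essentially the same route as the paper, which simply invokes Theorem \ref{t3.1}, Theorem \ref{p2et}, Corollary \ref{c4.10} and Proposition \ref{p4.2}; your unwinding of those citations (divisibility forcing ``finite $\Rightarrow$ zéro'', the corank comparison inside $(\Q_l/\Z_l)^\rho$, the identification of the image of $\Griff^2(X,\Z_l)$ with $\Ker b$, and the four-term sequence of Proposition \ref{p4.2}) is exactly what is intended there.

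Where you genuinely diverge is the equivalence with (vi). The paper compares the mod $l^n$ Bockstein sequences for $H^3_\Nis$ and $H^3_\et$ and shows that $H^3_\Nis(X,\Z/l^n(2))\to H^3_\et(X,\Z/l^n(2))$ is \emph{surjective}, using that $H^2_\et(X,\Q_l/\Z_l(2))$ is divisible --- this is where it spends the torsion-freeness of $H^3_\cont(X,\Z_l)$. You instead localize at the generic point and prove that $\phi_n\colon H^3_\tr(X,\Z/l^n(2))\to H^3_\tr(X,\Q_l/\Z_l(2))$ is \emph{injective}. The two mechanisms are two faces of the same coefficient sequence, but yours is in fact slightly more economical: the injectivity of $\phi_n$ only requires the injectivity of $H^3_\et(k(X),\Z/l^n(2))\to H^3_\et(k(X),\Q_l/\Z_l(2))$, i.e.\ the divisibility of $H^2_\et(k(X),\Q_l/\Z_l(2))$, which is automatic from Merkurjev--Suslin; the global injection $H^3_\et(X,\Z/l^n)\hookrightarrow H^3_\et(X,\Q_l/\Z_l)$ that you derive from the torsion-freeness of $H^3_\cont(X,\Z_l)$ plays no role in that step (an element of $H^3(X,\Z/l^n)$ landing in $NH^3(X,\Q_l/\Z_l)$ dies in $H^3(k(X),\Q_l/\Z_l)$, hence already in $H^3(k(X),\Z/l^n)$ by the generic-point injection alone). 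So your closing remark that \emph{both} torsion-freeness hypotheses are indispensable for the injectivity of $\phi_n$ overstates the matter: as you have set things up, only the torsion-freeness of $H^4_\cont(X,\Z_l)$ is actually used, namely to pass from ``$H^3_\tr(X,\Q_l/\Z_l(2))$ fini'' to its vanishing. Everything else (the identification of $H^3_\tr(X,\Z/l^n(2))$ with the image at the generic point, the commutation of the transcendental quotient with the colimit, and the reduction of (vi) to the vanishing of each finite-level term) is sound.
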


\begin{proof} Les \'equivalences (i) $\iff$ (ii) $\iff$ (iii) r\'esultent du th\'eor\`eme
\ref{t3.1} b) et du th\'eor\`eme \ref{p2et}. Les \'equivalences (ii) $\iff$ (iv) $\iff$ (v)
r\'esultent du th\'eor\`eme
\ref{t3.1} a), du corollaire \ref{c4.10} et de la proposition \ref{p4.2} (ou plus directement
du diagramme \eqref{eq4.4}). 

Si (vi) est vrai, il est vrai stablement (c'est-\`a-dire \`a coefficients $\Q_l/\Z_l$), ce
qui est \'equivalent \`a la nullit\'e de $H^3_\tr(X,\Q_l/\Z_l)$, d'o\`u (iv).
 R\'e\-ci\-pro\-que\-ment, montrons que (v) $\Longrightarrow$ (vi) si $H^3_\cont(X,\Z_l)$ et
$H^4_\cont(X,\Z_l)$ sont sans torsion. De (v) on d\'eduit que
$H^3_\tr(X,\Q_l/\Z_l(2))=0$, ce qui donne (vi) stablement. Pour l'obtenir \`a
coefficients finis, consid\'erons le diagramme commutatif aux lignes exactes:
\[\begin{CD}
0 &\to& H^2_\Nis(X,\Q_l/\Z_l(2))/l^n &\to& H^3_\Nis(X,\Z/l^n(2)) &\to&
{}_{l^n}H^3_\Nis(X,\Q_l/\Z_l(2))&\to& 0\\
&& @VVV @VVV @V{\wr}VV \\
0 &\to& H^2_\et(X,\Q_l/\Z_l(2))/l^n &\to& H^3_\et(X,\Z/l^n(2)) &\to&
{}_{l^n}H^3_\et(X,\Q_l/\Z_l(2))&\to& 0.
\end{CD}\]

Comme $H^3_\cont(X,\Z_l)$ est sans torsion, $H^2_\et(X,\Q_l/\Z_l(2))$ est divisible et le terme
en bas \`a gauche est nul. La fl\`eche verticale centrale est donc surjective, ce qui donne
l'\'enonc\'e pour $H^3_\et(X,\Z/l^n(2))$.
\end{proof}

\begin{thm}\label{t5.1} Soient $p$ un nombre premier $\equiv 1\pmod{3}$, $k=\bar \F_p$, et $E$
la courbe elliptique sur $k$ d'\'equation $x^3+y^3+z^3=0$. Posons $X=E^3$. Si $l\equiv
-1\pmod{3}$, les conditions de la proposition \ref{p5.2} sont v\'erifi\'ees.
\end{thm}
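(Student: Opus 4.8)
Je me propose de v\'erifier la condition (iii) de la proposition \ref{p5.2}, soit $\corg \Griff^2(X,\Z_l)\ge \dim H^3_\tr(X,\Q_l(2))$; les autres conditions en d\'ecoulent alors par cette proposition (noter que $X=E^3$, \'etant une vari\'et\'e ab\'elienne, appartient bien \`a $B_\tate(\F_p)$, le th\'eor\`eme de Tate fournissant la conjecture de Tate, et que $H^*(E^3,\Z_l)$ est sans torsion, ce qui autorise aussi (vi)). Pour le membre de gauche, Schoen \cite[th. 0.1]{schoen1} donne $\Griff^2(X)\{l\}\simeq (\Q_l/\Z_l)^2$ lorsque $l\equiv -1\pmod 3$. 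La proposition \ref{p5.1} identifie $\Griff^2(X,\Z_l)$ \`a $\Griff^2(X)\otimes \Z_l$, que le th\'eor\`eme \ref{t3.1} a) dit \^etre de torsion; comme $\Z_l$ est plat sur $\Z$ et que le quotient de $\Griff^2(X)$ par sa $l$-torsion, sans $l$-torsion, est tu\'e par $\Z_l$ d\`es que son tensoris\'e est de torsion, on obtient $\Griff^2(X,\Z_l)\simeq \Griff^2(X)\{l\}$, d'o\`u $\corg \Griff^2(X,\Z_l)=2$.

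Reste \`a calculer $\dim H^3_\tr(X,\Q_l(2))$. J'utilise la formule de K\"unneth $H^3(E^3,\Q_l)=\bigoplus_{i+j+k=3}H^i(E)\otimes H^j(E)\otimes H^k(E)$, de dimension totale $20$. Les six composantes dont le multidegr\'e est une permutation de $(2,1,0)$ sont isomorphes \`a $H^1(E)(-1)$ et sont images, par le morphisme de Gysin, de la cohomologie de diviseurs du type $\{P\}\times E\times E$: elles sont donc de coniveau $\ge 1$ et fournissent $12$ dimensions de $N^1H^3$. Pour la composante restante $H^1(E)^{\otimes 3}$, j'exploite la multiplication complexe: la cubique de Fermat $E$ est, pour $p\equiv 1\pmod 3$, ordinaire \`a multiplication complexe par $\Z[\omega]$, et le Frobenius de $\F_p$ agit sur $H^1(E)$ avec valeurs propres $\pi,\bar\pi$ o\`u $\pi\bar\pi=p$, les \'el\'ements $\pi,\bar\pi$ \'etant premiers entre eux dans $\Z[\omega]$ (car $p$ y est d\'ecompos\'e). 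Les valeurs propres sur $H^1(E)^{\otimes 3}$ sont donc $\pi^3$ et $\bar\pi^3$ (multiplicit\'e $1$) ainsi que $p\pi=\pi^2\bar\pi$ et $p\bar\pi=\pi\bar\pi^2$ (multiplicit\'e $3$ chacune).

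Comme les classes de coniveau $\ge 1$ proviennent de la cohomologie $H^1$ de diviseurs twist\'ee \`a la Tate, leurs valeurs propres de Frobenius sont divisibles par $p$; les parties $\pi^3,\bar\pi^3$, non divisibles par $p$, ne sont donc pas dans $N^1H^3$ et contribuent exactement $2$ \`a $H^3_\tr$. Le point d\'elicat est la r\'eciproque: montrer que les six dimensions de valeurs propres $p\pi,p\bar\pi$ (de type $H^1(-1)$) sont effectivement de coniveau $\ge 1$. Pour cela, j'invoque la d\'ecomposition de Chow-K\"unneth raffin\'ee de $h(E^3)$ (remarque \ref{r4.3}, disponible ici car $E^3$ est ab\'elienne): elle isole $h^{3,0}(E^3)$ comme seul facteur non twist\'e \`a la Lefschetz de $h^3(E^3)$, tous les autres facteurs simples \'etant de la forme $(\,\cdot\,)(-1)$ avec $(\,\cdot\,)$ effectif, donc de coniveau $\ge 1$ gr\^ace aux correspondances issues de la multiplication complexe. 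On conclut $\dim H^3_\tr(X,\Q_l(2))=20-18=2$, de sorte que la condition (iii) est satisfaite, avec \'egalit\'e.
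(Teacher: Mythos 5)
Votre strat\'egie globale co\"\i ncide avec celle du texte : v\'erifier la condition (iii), calculer $\corg \Griff^2(X,\Z_l)=2$ via Schoen et la proposition \ref{p5.1}, puis majorer $\dim H^3_\tr(X,\Q_l(2))$ par $2$ au moyen des valeurs propres de Frobenius. Le traitement des six composantes de K\"unneth de multidegr\'e $(2,1,0)$ (images de Gysin de diviseurs du type $\{P\}\times E\times E$) est correct, de m\^eme que le calcul des valeurs propres $\pi^3,\bar\pi^3,p\pi,p\bar\pi$ sur $H^1(E)^{\otimes 3}$ et l'argument de non-divisibilit\'e montrant que les espaces propres associ\'es \`a $\pi^3,\bar\pi^3$ ne sont pas de coniveau $\ge 1$ (point d'ailleurs inutile pour (iii), qui ne demande qu'une majoration). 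Petite r\'eserve au passage : ``le th\'eor\`eme de Tate'' ne fournit que la conjecture de Tate pour les diviseurs ; l'appartenance de $E^3$ \`a $B_\tate(\F_p)$ demande la conjecture de Tate cohomologique en toute codimension, qui r\'esulte ici de Soul\'e ou de Spiess.

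La lacune s\'erieuse se situe au point que vous qualifiez vous-m\^eme de ``d\'elicat'' : montrer que les six dimensions de valeurs propres $p\pi$, $p\bar\pi$ sont de coniveau $\ge 1$. L'appel \`a la d\'ecomposition de Chow--K\"unneth raffin\'ee ne constitue pas une d\'emonstration : identifier la r\'ealisation de $h^{3,0}(E^3)$ aux seuls espaces propres $\pi^3,\bar\pi^3$, et savoir que les autres facteurs simples de $h^3(E^3)$ sont de la forme $(\text{effectif})(-1)$ --- donc de coniveau $\ge 1$ ---, c'est pr\'ecis\'ement la conjecture de Tate g\'en\'eralis\'ee pour $E^3$ en degr\'e $3$ ; elle est vraie ici, mais ``gr\^ace aux correspondances issues de la multiplication complexe'' n'en est pas une preuve. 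L'argument du texte est plus \'el\'ementaire et \'evite ce d\'etour : tout tenseur pur $b$ de $\Lambda^3 H^1_\cont(X,\Q_l)\otimes K$ autre que $v_1\wedge v_2\wedge v_3$ et $w_1\wedge w_2\wedge w_3$ est divisible par un $c=v_i\wedge w_j$ de valeur propre $\pi\bar\pi=p$ ; par le th\'eor\`eme de Tate pour les diviseurs (d\^u \`a Deuring dans ce cas), $c\otimes\Q_l(1)$ est la classe d'un \'el\'ement de $\Pic(X)\otimes\Z_l$, et la divisibilit\'e par une classe de diviseur place $b$ dans $N^1H^3_\cont(X,\Q_l)$. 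C'est ce maillon concret --- alg\'ebricit\'e des classes de Tate en degr\'e $2$ et divisibilit\'e dans l'alg\`ebre ext\'erieure --- qu'il vous faut substituer \`a l'invocation de la remarque \ref{r4.3}.
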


\begin{proof} Pour commencer, observons que $X\in B_\tate(\F_p)$. Cela r\'esulte du th\'eor\`eme de Spiess \cite{spiess}, ou simplement de Soul\'e \cite[th. 3]{soule} puisque $\dim X = 3$.

Montrons (iii). D'apr\`es
Schoen
\cite[th. 0.1]{schoen1} et la proposition
\ref{p5.1}, on a
$\Griff^2(X,\Z_l)\simeq (\Q_l/\Z_l)^2$;  il
faut donc montrer que $H^3_\tr(X,\Z_l)$ est de rang $\le 2$. Comme $X$ est une vari\'et\'e
ab\'elienne, on a un isomorphisme 
\[\Lambda^3 H^1_\cont(X,\Q_l)\iso H^3_\cont(X,\Q_l).\]

L'hypoth\`ese sur $p$ assure que $E$ est ordinaire (cf. \cite[p. 46]{schoen1}).
Soient $\alpha,
\beta$ les nombres de Weil de $E$ sur $\F_p$: on a
$\alpha\beta=p$, et $K:=\Q(\alpha)=\Q(\mu_3)$ (ibid.). 

L'espace vectoriel $H^1_\cont(X,\Q_l)$ est somme de trois exemplaires de
\allowbreak $H^1_\cont(E,\Q_l)$: il est donc de rang $6$. Soit
$(v_1,v_2,v_3,w_1,w_2,w_3)$ une base du $\Q_l\otimes K$-module libre $H^1_\cont(X,\Q_l)\otimes
K$ form\'ee de vecteurs propres pour l'action du Frobenius $\phi$, avec $\phi v_i=\alpha v_i$,
$\phi w_i=\beta w_i$. Le
$\Q_l\otimes K$-module $H^3_\cont(X,\Q_l)\otimes K$ est libre de rang $20$, de base les
tenseurs purs de degr\'e $3$ construits sur les $v_i,w_j$. Par construction, cette base $B$ est
form\'ee de vecteurs propres pour l'action de Frobenius. 

Soit $b\in B$. Si $b\notin\{v_1\wedge v_2\wedge v_3,w_1\wedge w_2\wedge w_3\}$, $b$ est
divisible par $c=v_i\wedge w_j$ pour un couple $(i,j)$. La valeur propre de $c\in
H^2_\cont(X,\Q_l)\otimes K$ est \'egale \`a $p$; en particulier, $c\in H^2_\cont(X,\Q_l)$. Par
le th\'eor\`eme de Tate (d\^u dans ce cas particulier \`a Deuring), $c\otimes \Q_l(1)\in
H^2_\cont(X,\Q_l(1))$ est de la forme $\cl(\gamma)$ pour un diviseur $\gamma\in \Pic(X)\otimes
\Z_l$: il en r\'esulte que $b\in NH^3_\cont(X,\Q_l)$.

Ceci montre que $H^3_\tr(X,\Q_l)\otimes K$ est engendr\'e par $b=v_1\wedge v_2\wedge v_3$ et
$b'= w_1\wedge w_2\wedge w_3$, et donc que $\dim H^3_\tr(X,\Q_l)\le 2$.
\end{proof}

\begin{rque} 
Comme $H^*_\cont(X,\Z_l)\to
H^*_\et(X,\Z/l^n)$ est surjectif, le calcul fait dans la preuve du th\'eor\`eme \ref{t5.1}
montre a priori que l'image de $H^3_\et(X,\Z/l^n)$ dans $H^3(F,\Z/l^n)$ est de rang $\le 2$,
o\`u $F=k(X)$. On aimerait bien d\'emontrer sa nullit\'e  (l'\'enonc\'e (vi) de la proposition
\ref{p5.2}) directement: il s'agit de voir que,
si
$x_1,x_2,x_2\in H^1_\et(X,\Z/l^n)$, le cup-produit $x_1\cdot x_2\cdot x_3$ est nul dans
$H^3_\et(F,\Z/l^n)$.

On peut se limiter aux triplets $(x_1,x_2,x_3)$ tels que 
\[x_i\in
\IM\big(H^1_\et(E,\Z/l^n)\allowbreak\by{\pi_i^*} H^1_\et(X,\Z/l^n)\big)\] 
pour une valeur de
$i$, o\`u
$\pi_i$ est la $i$-\`eme projection, et sans perte de g\'en\'eralit\'e, supposer
$i=1$. Alors $x_1$ d\'efinit une isog\'enie $f:E'\to E$ de degr\'e $l^n$. Soit $F'=k(X')$,
o\`u $X'=E'\times E\times E$: d'apr\`es Merkurjev-Suslin, la nullit\'e de $x_1\cdot
x_2\cdot x_3$ dans $H^3_\et(F,\Z/l^n)$ \'equivaut au fait que $x_2\cdot x_3\in
H^2(F,\Z/l^n)\simeq {}_{l^n}Br(F)$ est une norme dans l'extension $F'/F$. Peut-on montrer ceci
directement?
\end{rque}

\subsection{Autres corps} Si $cd(k)\le 1$, la suite exacte \eqref{eq3.2} persiste 
\cite[th. 3.3]{jannsen0}.
Malheureusement, elle ne semble pas apporter d'informations suppl\'ementaires tr\`es utiles,
sauf peut-\^etre dans le cas d'un corps quasi-fini que je n'ai pas explor\'e.

Consid\'erons les notations de la preuve de la proposition \ref{p2.1}. Si $k$ est de type fini
mais n'est pas fini, je ne sais pas s'il faut esp\'erer que $K$ est de torsion, m\^eme sous
toutes les conjectures habituelles (Jannsen le sugg\`ere dans \cite[lemma 2.7]{jannsen}).  On
peut remplacer
$H^4_\cont(X,\Z_l(2))$ par le groupe plus fin 
\[\tilde H^4_\cont(X,\Z_l(2))=\colim H^4_\cont(\sX,\Z_l(2))\] 
o\`u $\sX$ d\'ecrit les mod\`eles r\'eguliers de $X$, de type fini sur $\Spec \Z$ (cf.
\cite[(11.6.1)]{jannsen2}). En caract\'eristique $p$,  par passage \`a la limite, la conjecture
de Tate-Beilinson implique alors que $K_\et$ est  de torsion \cite[th. 60, (iii)]{kcag}. De
plus, cette conjecture implique que $CH^2(X)$ est de type fini (comme quotient de $CH^2(\sX)$
pour un mod\`ele $\sX$ lisse de type fini), donc que
$K$ est fini. Par contre, elle n'implique pas a priori que $H^4_\et(X,\Z(2))$ est de type fini
(dans les suites exactes de Gysin pour un diviseur, le terme suivant est de la forme
$H^3_\et(Z,\Z(1)) = Br(Z)$\dots) donc il se pourrait fort bien que $K_\et$ ait une partie
divisible non triviale.

Le bon objet avec lequel travailler pour des vari\'et\'es ouvertes sur un corps fini est
$H^4_W(\sX,\Z(2))$ (cohomologie Weil-\'etale): c'est celui qui permet d'attraper tout $\tilde
H^4_\cont(X,\Z_l(2))$ pour $l\ne p$ \cite[th. 64]{kcag}. Mais cela a l'air compliqu\'e,
cf. \cite[(3.2)]{cell} ou \cite[th. 62 (ii)]{kcag}.

\appendix

\section{Cohomologie de Hodge-Witt logarithmique sur des corps imparfaits} Dans \cite{gl2},
Geisser et Levine comparent la cohomologie motivique modulo $p$ d'un corps de caract\'eristique
$p$ quelconque avec sa cohomologie de Hodge(-Witt) logarithmique, mais n'en d\'eduisent une
comparaison globale que pour des vari\'et\'es lisses sur un corps parfait. Le but de ce
num\'ero est de rappeler les bases de cette comparaison et de se d\'ebarrasser de mani\`ere
``triviale" de l'hypoth\`ese de perfection, \`a l'aide d'une observation classique de Quillen
\cite[p. 133, d\'em. du th. 5.11]{quillen}.

\subsection{Cohomologie de Hodge-Witt logarithmique} Soit $X$ un sch\'ema de caract\'eristique
$p$. On lui associe son pro-complexe de de Rham-Witt
\cite[p. 548, 1.12]{illusie}
\[(W_r\Omega_X^\cdot)_{r\ge 1}\]
qui est un syst\`eme projectif de faisceaux d'alg\`ebres diff\'erentielles gradu\'ees
sur $X_\et$, prolongeant (pour $\cdot=0$) le profaisceau des vecteurs de Witt et (pour $r=1$)
le complexe des diff\'erentielles de K\"ahler. Il est muni d'un op\'erateur $F:W_r\Omega^n_X\to
W_{r-1}\Omega^n_X$ \cite[p. 562, th. 2.17]{illusie}.

Si $X$ est d\'efini sur un corps parfait $k$, on a \'evidemment
\[W_r\Omega_X^\cdot=W_r\Omega_{X/k}^\cdot.\]

On a des applications ``de Teichm\"uller" (multiplicatives)
\begin{gather*}
\sO_X\to W_r\sO_X\\
x\mapsto \underline{x} = (x,0,\dots,0,\dots)
\end{gather*}
\cite[p. 505, (1.1.7)]{illusie}, qu'on utilise pour d\'efinir les homomorphismes
\begin{align}
d\log:\sO_X^*/\left(\sO_X^*\right)^{p^r}&\to W_r\Omega^1_X\label{edlog}\\
x&\mapsto d\underline{x}/\underline{x}\notag
\end{align}
\cite[p. 580, (3.23.1)]{illusie}. On d\'efinit alors $W_r\Omega^n_{X,\log}$ comme le
sous-faisceau de $W_r\Omega^n_X$ engendr\'e localement pour la topologie \'etale par les
sections de la forme
$d\log x_1\wedge\dots\wedge d\log x_n$ \cite[p. 596, (5.7.1)]{illusie};
comme dans \cite{gl2}, nous noterons simplement ce faisceau $\nu_r(n)_X$.

\begin{lemme}\label{lillusie} Pour tout $x\in \Gamma(X,\sO_X)$ et pour tout $r\ge 1$, on a
\[\underline{x}\wedge \underline{1-x} =0\in \Gamma(X,W_r\Omega^2_X).\]
\end{lemme}

\begin{proof} (Illusie) Le morphisme $X\to \A^1_{\F_p}$ d\'efini par $x$ nous ram\`ene au cas
universel $X=\Spec \F_p[t]$, $x=t$. Mais alors $W_r\Omega^2_X=0$ puisque $\dim X = 1$.
\end{proof}

\subsection{Le symbole logarithmique} Supposons $X=\Spec k$, o\`u $k$ est un corps. Le lemme
\ref{lillusie} implique que l'homomorphisme $d\log$ de \eqref{edlog} induit un \emph{symbole
logarithmique}
\begin{align} 
d\log: K_n^M(k)/p^r&\to \nu_r(n)_k\label{edlogM}\\
\{x_1,\dots,x_n\} &\mapsto d\log(x_1)\wedge\dots\wedge d\log(x_n).\notag
\end{align}

Soit $K$ le corps des fonctions d'un $k$-sch\'ema lisse $X$,
o\`u $k$ est parfait de caract\'eristique $p$. Un point $x$ de codimension $1$ de $X$ d\'efinit
une valuation discr\`ete $v$ sur $K$, de corps r\'esiduel $E=k(x)$. Le th\'eor\`eme de puret\'e
de Gros
\cite[p. 46, th. 3.5.8]{gros} et la longue suite exacte de cohomologie \`a supports
d\'efinissent des homomorphismes r\'esidus
\begin{equation}\label{ereslog}
\nu_r(n)_K\by{\partial_v} \nu_r(n-1)_E.
\end{equation}

\begin{lemme}\label{lresid} Le diagramme
\[\begin{CD}
K_n^M(K)/p^r @>\partial_v>> K_{n-1}^M(E)/p^r\\
@V{d\log}VV @V{d\log}VV\\
\nu_r(n)_K@>{\partial_v}>> \nu_r(n-1)_E
\end{CD}\]
o\`u la fl\`eche horizontale du haut est le r\'esidu en $K$-th\'eorie de Milnor, est
commutatif au signe pr\`es.
\end{lemme}

\begin{proof} Pour $n=1,2$ c'est fait dans Gros-Suwa \cite[p. 625, lemme 4.11]{gros-suwa1}. La
d\'emonstration ne se propage pas tout \`a fait \`a $n>2$ car elle utilise la formule explicite
donnant $\partial(\{x,y\})$ pour $x,y\in K^*$. Pour la propager, il suffit toutefois de
remarquer que $K_n^M(K)$ est engendr\'e par les symboles de la forme $\{u_1,\dots,u_{n-1},x\}$
avec $u_i\in O_v^*$ et $x\in K^*$. 
\end{proof}

\subsection{Le morphisme de comparaison} Supposons $X$ lisse sur un corps parfait $k$. D'apr\`es
\cite[p. 597, th. 5.7.2]{illusie}, on a une suite exacte de pro-faisceaux \'etales
\[0\to \nu_\cdot(n)_X\to W_\cdot\Omega^n_X\by{1-F} W_\cdot\Omega^n_X\to 0\]
qui en fait n'interviendra pas ici. De plus, on a le th\'eor\`eme suivant, d\^u \`a Gros et
Suwa:

\begin{thm}\label{tgs} On a une suite exacte de faisceaux zariskiens
\[0\to \alpha_* \nu_r(n)_X\to \bigoplus_{x\in X^{(0)}}
\left(\nu_r(n)_{k(x)}\right)_{\overline{\{x\}}}\by{\partial} \bigoplus_{x\in X^{(1)}}
\left(\nu_r(n-1)_{k(x)}\right)_{\overline{\{x\}}}\by{\partial}\dots\] 
o\`u $\alpha$ d\'esigne la projection $X_\et\to X_\Zar$ et les diff\'erentielles $\partial$
sont cons\-trui\-tes \`a partir des r\'esidus \eqref{ereslog}.
\end{thm}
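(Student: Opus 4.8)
The plan is to recognize the displayed complex as the Cousin (Gersten) complex of the \'etale sheaf $\nu_r(n)_X$ and to prove its exactness by the Bloch--Ogus--Gabber formalism of \cite{cthk}, exactly as that formalism was used earlier for $R\alpha_*\Z_p(n)^c_X$. Concretely, I would work with the cohomology theory with supports
\[(X,Y)\mapsto H^*_Y(X_\et,\nu_r(n)_X)\]
on smooth $k$-schemes. A word of caution on the logical order: since the Geisser--Levine comparison of theorem \ref{t1} b) itself rests on the present statement, I must \emph{not} use it here; the only inputs available are Gros's purity theorem \cite{gros}, standard properties of \'etale cohomology, and the elementary lemmas \ref{lillusie} and \ref{lresid}.

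The first axiom, \textbf{COH1} (Nisnevich, hence a fortiori \'etale, excision), is standard for \'etale cohomology. The substantive input is \textbf{COH5}, the projective bundle formula, which is exactly Gros's computation of logarithmic Hodge--Witt cohomology of projective bundles in \cite{gros}; over finite residue fields one must moreover supply the transfer axiom \textbf{COH6}, which is standard. Granting these, \cite[cor. 5.1.11]{cthk} yields the Gersten conjecture for $\nu_r(n)_X$: the coniveau spectral sequence takes the Bloch--Ogus form $E_2^{p,q}=H^p_\Zar(X,R^q\alpha_*\nu_r(n)_X)$, and for each fixed $q$ the corresponding $E_1$-row, augmented by the sheaf $R^q\alpha_*\nu_r(n)_X$, is a flasque resolution.

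It remains to read off the row $q=0$. For a point $x\in X^{(p)}$, Gros's purity \eqref{eq3.3} identifies the leading local cohomology $H^p_x(X_\et,\nu_r(n))$ with $\nu_r(n-p)_{k(x)}$; hence $E_1^{p,0}=\bigoplus_{x\in X^{(p)}}\bigl(\nu_r(n-p)_{k(x)}\bigr)_{\overline{\{x\}}}$, which is the degree-$p$ term of the stated complex, and the differential $d_1$ is, by its very construction from the localization sequences, the residue \eqref{ereslog} (its compatibility with the Milnor residues being recorded in lemma \ref{lresid}). Since $R^0\alpha_*\nu_r(n)_X=\alpha_*\nu_r(n)_X$, the augmented $q=0$ resolution is precisely the asserted exact sequence.

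The main obstacle is that the purity of \cite{gros} is only semi-pure: formula \eqref{eq3.3} leaves $R^{d+1}i^!\nu_r(n)$ possibly nonzero---this is the very phenomenon that forces lemma \ref{l2.3p} to be an isomorphism only in a range, in contrast with the clean lemma \ref{l2.3} for $l\ne p$. Together with the positive-degree \'etale cohomology of the (imperfect) residue fields $k(x)$, this produces nonzero off-diagonal rows $q\ge 1$ in the coniveau spectral sequence, so the $q=0$ resolution cannot be extracted from a bare purity computation. This is exactly why one routes through the effacement theorem of \cite{cthk}: once \textbf{COH1} and \textbf{COH5} are checked, effacement forces the spectral sequence into Bloch--Ogus form and turns every row, the $q=0$ row included, into a genuine resolution, independently of the off-diagonal contributions. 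Verifying \textbf{COH5} from Gros's projective bundle formula (and \textbf{COH6} over finite residue fields) is thus the technical heart of the proof.
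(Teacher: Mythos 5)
Your argument is correct and is essentially the paper's own: the paper proves Theorem \ref{tgs} by citing \cite[cor. 1.6]{gros-suwa1} or \cite[p. 70, Ex. 7.4 (3)]{cthk}, and your reconstruction via the Bloch--Ogus--Gabber effacement theorem with axioms \textbf{COH1}, \textbf{COH5} (Gros's projective bundle formula) and \textbf{COH6} over a finite base field is precisely the content of the second reference, which the paper itself spells out in identical terms for the analogous theory $R\alpha_*\Z_p(n)^c_X$. Your cautions about avoiding circularity with Theorem \ref{t1} b) and about semi-purity only affecting the rows $q\ge 1$ are both well taken.
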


\begin{proof} Voir \cite[cor. 1.6]{gros-suwa1} ou \cite[p. 70, Ex. 7.4 (3)]{cthk}.
\end{proof}

Supposons maintenant $X$ r\'egulier de type fini sur un corps $k$ (de caract\'eristique $p$).
Supposons d'abord $k$ de type fini sur $\F_p$: alors $X$ admet un mod\`ele $\sX$
r\'egulier, donc lisse, de type fini sur $\F_p$. Soit $j:X\to \sX$ la pro-immersion ouverte
correspondante: on a \'evidemment
\[\nu_r(n)_X = j^*\nu_r(n)_\sX\]
puisque les anneaux semi-locaux de $X$ sont certains anneaux semi-locaux de $\sX$. 

Interpr\'etons maintenant $K_n^M(K)/p^n$ comme $H^n(K,\Z/p(n))$, cf. th\'e\-o\-r\`e\-me
\ref{tK}. Vu la remarque \ref{r1} et le th\'eor\`eme \ref{tgs}, les
homomorphismes \eqref{edlogM} et le lemme
\ref{lresid} induisent des homomorphismes de faisceaux
\[\sH^n(\Z/p^r(n)_\sX)\to \alpha_*\nu_r(n)_\sX\]
et donc des morphismes dans $D^-(\sX_\Zar)$
\[
\Z/p^r(n)_\sX\to \alpha_*\nu_r(n)_\sX[-n]
\]
puisque $\sH^i(\Z/p^r(n)_\sX)=0$ pour $i>n$ (lemme \ref{l2.1}).

D'o\`u, en appliquant $j^*$, des morphismes dans $D^-(X_\Zar)$
\begin{equation}\label{eqcomp}
\Z/p^r(n)_X\to \alpha_*\nu_r(n)_X[-n].
\end{equation}

Si $k$ est
quelconque, \'ecrivons
\begin{gather*}
k=\colim_\alpha k_\alpha\\
X=\lim_\alpha X_\alpha
\end{gather*}
o\`u les $k_\alpha$ sont de type fini sur $\F_p$ et $X_\alpha$ est un $k_\alpha$-sch\'ema
r\'egulier de type fini, de sorte que $k_\alpha \subset k_\beta$ induise un
isomorphisme $X_\beta\iso X_\alpha\otimes_{k_\alpha} k_\beta$. On a \'evidemment:
\begin{align*}
\nu_r(n)_X &= \colim_\alpha \pi_\alpha^*\nu_r(n)_{X_\alpha}\\
\sH^n(\Z/p^r(n)_X) &= \colim_\alpha \pi_\alpha^*\sH^n(\Z/p^r(n)_{X_\alpha})
\end{align*}
o\`u $\pi_\alpha:X\to X_\alpha$ est le morphisme canonique. Ceci \'etend la d\'efinition de
\eqref{eqcomp} au cas o\`u le corps de base est quelconque. On voit de m\^eme:

\begin{prop}[cf. Quillen \protect{\cite[p. 133, d\'em. du th. 5.11]{quillen}}]\label{pquillen}
La suite exacte du th\'eor\`eme \ref{tgs} s'\'etend
\`a tout
$X$ r\'egulier de type fini sur un corps.\qed
\end{prop}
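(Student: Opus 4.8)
The plan is to follow the two-step reduction already prepared above, propagating the Gersten resolution of Theorem \ref{tgs} first to an $X$ regular of finite type over a field finitely generated over $\F_p$, and then to an arbitrary base field by a filtered colimit. Throughout, the guiding principle is that the two operations involved---restriction along a pro-open immersion and pullback along the transition maps of a cofiltered limit of schemes---carry a Gersten complex assembled from the residues \eqref{ereslog} to the Gersten complex of the target, precisely because those residues are defined locally; the only genuinely analytic input is the exactness of filtered colimits.

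First I would treat the case where $k$ is finitely generated over $\F_p$. Spreading $X$ out, one obtains a model $\sX$ which is regular, hence smooth (as $\F_p$ is perfect), of finite type over $\F_p$, together with a pro-open immersion $j\colon X\to\sX$, so that $X=\lim_i U_i$ with each $U_i\subset\sX$ open and $\nu_r(n)_X=j^*\nu_r(n)_\sX$. Applying Theorem \ref{tgs} to $\sX$ and restricting the resulting resolution along $j$---that is, passing to the filtered colimit of its restrictions to the $U_i$---I would check that the kernel term becomes $j^*\alpha_*\nu_r(n)_\sX=\alpha_*\nu_r(n)_X$ and that the $p$-th term becomes $\bigoplus_{x\in X^{(p)}}(\nu_r(n-p)_{k(x)})_{\overline{\{x\}}}$: a point of $\sX$ lies in $X$ exactly when it lies in all the $U_i$, its codimension and residue field are then unchanged, and both $\nu_r$ and the residues \eqref{ereslog} commute with localization. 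Since restriction to an open is exact and filtered colimits are exact, exactness of the augmented complex for $\sX$ at the points of $X$ survives, yielding the resolution for $X$.

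For a general base field I would write $k=\colim_\alpha k_\alpha$ and $X=\lim_\alpha X_\alpha$ as above, with $k_\alpha$ finitely generated over $\F_p$ and $X_\alpha$ regular of finite type over $k_\alpha$, so that the previous case applies to each $X_\alpha$. The displayed formulas preceding the statement, together with the compatibility of étale cohomology with the limit $X=\lim_\alpha X_\alpha$ along affine transition maps, identify the augmented Gersten complex of $X$ with the filtered colimit $\colim_\alpha\pi_\alpha^*$ of those of the $X_\alpha$: the codimension-$p$ points of $X$ and their residue fields are the filtered colimits of those of the $X_\alpha$ (base extension along $k_\beta/k_\alpha$ preserving codimension), and $\nu_r$ together with the residues commute with these colimits. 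Taking the filtered colimit of the exact sequences attached to the $X_\alpha$ then yields the exact sequence for $X$. The main obstacle I anticipate is purely a matter of bookkeeping: verifying that the differentials $\partial$ are genuinely compatible with restriction and with the maps $\pi_\alpha^*$, so that these operations really send Gersten complexes to Gersten complexes. This compatibility is forced by the local, purity-theoretic definition of \eqref{ereslog}, which is exactly Quillen's observation, so no essential difficulty should remain.
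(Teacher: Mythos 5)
Your proof is correct and follows essentially the same route as the paper: spread $X$ out to a model $\sX$ regular, hence smooth, of finite type over $\F_p$ when $k$ is finitely generated, restrict the Gersten resolution of Theorem \ref{tgs} along the pro-open immersion $j\colon X\to\sX$ (using $\nu_r(n)_X=j^*\nu_r(n)_\sX$), then pass to a filtered colimit for a general base field. The paper leaves these verifications implicit (``on voit de m\^eme''), so your explicit bookkeeping of codimensions, residue fields and the compatibility of the residues \eqref{ereslog} with localization is exactly the content intended.
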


\subsection{Le th\'eor\`eme de Geisser-Levine}

\begin{thm}\label{tgl} Soit $X$ un sch\'ema r\'egulier de type fini sur un corps $k$ de
caract\'eristique $p$. Alors le morphisme \eqref{eqcomp} est un isomorphisme.
\end{thm}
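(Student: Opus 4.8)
Le plan est de se ramener au cas des corps au moyen des résolutions de Gersten, le symbole logarithmique \eqref{edlogM} fournissant la comparaison en chaque corps résiduel. D'abord je remarquerais que le but $\alpha_*\nu_r(n)_X[-n]$ est un faisceau placé en degré $n$, et que la source l'est aussi: par le lemme \ref{l2.1} le complexe $\Z/p^r(n)_X$ vit en degrés $\le n$, et la conjecture de Gersten établie dans la preuve de ce lemme injecte ses faisceaux de cohomologie dans leurs fibres au point générique; or le calcul de Geisser-Levine au niveau des corps donne $H^i(k(X),\Z/p^r(n))=0$ pour $i\ne n$, donc $\Z/p^r(n)_X$ est concentré en degré $n$. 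Ainsi \eqref{eqcomp} est un quasi-isomorphisme si et seulement si l'homomorphisme de faisceaux induit $\sH^n(\Z/p^r(n)_X)\to \alpha_*\nu_r(n)_X$ est un isomorphisme.

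Ensuite j'utiliserais que ces deux faisceaux possèdent des résolutions de Gersten. Pour la source, la suite spectrale de coniveau de la remarque \ref{r1}, jointe au théorème \ref{tK}, fournit le complexe de Cousin de terme général $\bigoplus_{x\in X^{(p)}} (i_x)_* K_{n-p}^M(k(x))/p^r$. Pour le but, le théorème \ref{tgs}, étendu à une base quelconque par la proposition \ref{pquillen}, fournit celui de terme général $\bigoplus_{x\in X^{(p)}} (i_x)_* \nu_r(n-p)_{k(x)}$. Sur chaque corps résiduel l'application de comparaison est le symbole \eqref{edlogM}, et le lemme \ref{lresid} assurerait sa compatibilité aux résidus: on obtiendrait donc un morphisme de résolutions.

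Pour conclure, j'invoquerais le fait que chaque symbole $d\log:K_m^M(F)/p^r\to \nu_r(m)_F$ est un isomorphisme pour tout corps $F$ de caractéristique $p$ — c'est le théorème de Geisser-Levine au niveau des corps (Bloch-Gabber-Kato si $r=1$), valable sans aucune hypothèse de perfection. Un morphisme de résolutions qui est un isomorphisme en chaque degré induisant un isomorphisme sur les faisceaux résolus, on en déduirait que $\sH^n(\Z/p^r(n)_X)\to \alpha_*\nu_r(n)_X$ est un isomorphisme, donc que \eqref{eqcomp} est un quasi-isomorphisme.

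Le principal obstacle me semble être la disponibilité de la résolution de Gersten de $\nu_r(n)$ sur une base \emph{imparfaite}: c'est précisément l'objet de la proposition \ref{pquillen} (astuce de Quillen), et c'est là que réside le contenu nouveau par rapport à Geisser-Levine. Une fois celle-ci admise, ainsi que l'isomorphisme du symbole au niveau des corps, le reste est la comparaison formelle de deux complexes de Cousin, le lemme \ref{lresid} fournissant la seule compatibilité non triviale. Une variante consisterait à se ramener, par passage à la limite sur les sous-corps $k_\alpha$ de type fini sur $\F_p$, au cas d'une base $k_\alpha$ de type fini sur $\F_p$ — auquel cas $X$, régulier de type fini sur $\F_p$, est lisse sur ce corps parfait — puis à appliquer directement Geisser-Levine; mais cela revient au même argument, les modèles lisses sur $\F_p$ étant eux aussi traités via le théorème \ref{tgs}.
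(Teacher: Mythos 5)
Votre d\'emonstration est correcte et suit pour l'essentiel la m\^eme strat\'egie que celle de l'article: r\'eduction au cas des corps au moyen des r\'esolutions de Gersten des deux membres, compar\'ees via le symbole $d\log$ et le lemme \ref{lresid}, puis invocation de Bloch--Gabber--Kato et de Geisser--Levine au niveau des corps, l'astuce de Quillen (proposition \ref{pquillen}) traitant les bases imparfaites. La seule diff\'erence est d'ordre r\'edactionnel: l'article proc\`ede par r\'eductions successives (corps, puis $X$ lisse sur $k$ parfait, puis $k$ de type fini sur $\F_p$, puis passage \`a la limite), tandis que votre version directe --- qui n\'ecessite aussi la r\'esolution de Gersten du c\^ot\'e motivique sur une base imparfaite, donc le m\^eme passage \`a la limite que vous d\'ecrivez en fin de texte --- est pr\'ecis\'ement le court-circuit des \'etapes 2) et 3) sugg\'er\'e dans la remarque qui suit le th\'eor\`eme.
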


\begin{proof} Il s'agit de voir que
\[\sH^i(\Z/p^r(n)_X)\simeq
\begin{cases}
0 &\text{si $i\ne n$}\\
\alpha_*\nu_r(n)_X &\text{si $i=n$}
\end{cases}
\]
le dernier isomorphisme \'etant induit par \eqref{eqcomp}. L'\'enonc\'e est clair pour $i>n$,
cf. lemme \ref{l2.1}.

1) $X=\Spec k$:  c'est le th\'eor\`eme de Bloch-Gabber-Kato pour $i=n$ \cite[p. 117, cor.
2.8]{bk} et celui de Geisser-Levine \cite[th. 1.1]{gl2} pour $i<n$.

2) $X$ lisse sur $k$ parfait: on se r\'eduit \`a 1) en utilisant le th\'eor\`eme \ref{tgs}, le
lemme \ref{lresid} et la conjecture de Gersten pour la cohomologie motivique, cf. preuve du
lemme \ref{l2.1}.

3) $k$ de type fini sur $\F_p$: on se ram\`ene \`a 2) par la technique du num\'ero
pr\'ec\'edent.

4) $k$ quelconque: on se ram\`ene \`a 3) par passage \`a la limite.
 \end{proof}

\begin{rque} On pourrait court-circuiter les \'etapes 2) et 3), dans l'esprit de la proposition
\ref{pquillen}.
\end{rque}

\begin{cor}\label{cgl} Soit $X$ un sch\'ema r\'egulier de type fini sur un corps $k$ de
caract\'eristique $p$. Alors le morphisme $\alpha^*$\eqref{eqcomp} est un isomorphisme, o\`u
$\alpha$ est la projection $X_\et\to X_\Zar$.\qed
\end{cor}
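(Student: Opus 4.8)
The plan is to deduce the étale statement directly from the Zariski statement of Theorem~\ref{tgl} by applying the inverse image functor along $\alpha\colon X_\et\to X_\Zar$. The decisive formal point is that $\alpha^*$, being the inverse image of a morphism of topoi, is exact: as a left adjoint it preserves colimits, and as (part of) a geometric morphism it preserves finite limits, so it carries short exact sequences of abelian sheaves to short exact sequences. It therefore descends to $D^-$ and sends quasi-isomorphisms to quasi-isomorphisms. Since Theorem~\ref{tgl} asserts that \eqref{eqcomp} is an isomorphism in $D^-(X_\Zar)$, applying $\alpha^*$ yields at once that $\alpha^*$\eqref{eqcomp} is an isomorphism in $D^-(X_\et)$. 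This is the entire formal skeleton of the argument, which is why the statement is marked \qed.

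What genuinely has to be checked is that the source and target of $\alpha^*$\eqref{eqcomp} are the objects intended in Theorem~\ref{t1}~b), namely the étale motivic complex $(\Z/p^r)_X(n)_\et$ and $\nu_r(n)_X[-n]$; the target identification uses the counit $\alpha^*\alpha_*\nu_r(n)_X\to\nu_r(n)_X$. Both identifications are best verified on stalks at geometric points. The stalk of $(\Z/p^r)_X(n)_\et$ at a geometric point $\bar x$ is the $\Z/p^r(n)$-motivic complex of the strictly henselian local ring $\sO^{sh}_{X,\bar x}$, while the stalk of $\nu_r(n)_X$ is $\nu_r(n)(\sO^{sh}_{X,\bar x})$. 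So the étale content of the corollary is the assertion that, stalkwise, the cohomology of the motivic complex of $\sO^{sh}_{X,\bar x}$ vanishes outside degree $n$ and equals $\nu_r(n)(\sO^{sh}_{X,\bar x})$ in degree $n$.

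The one non-formal input is then a passage to the limit. Writing $\sO^{sh}_{X,\bar x}=\colim_\lambda R_\lambda$ as a filtered colimit of local rings of étale, finite-type $X$-schemes (all essentially smooth over $k$, with affine transition maps), Theorem~\ref{tloc}~a) shows that the higher Chow groups commute with this colimit, and $\nu_r(n)$ commutes as well since it is an étale sheaf; one invokes Proposition~\ref{pquillen} to have the Gersten resolution available over an arbitrary base field. Applying Theorem~\ref{tgl} to each finite-type model $R_\lambda$ and passing to the colimit then gives exactly the desired stalkwise isomorphism. I expect the main obstacle to be this bookkeeping reconciliation: the literal $\alpha^*$ of \eqref{eqcomp} has for source the pullback of the \emph{Zariski} complex, whose stalks are governed by the local rings $\sO_{X,x}$, whereas the étale motivic complex is governed by the strict henselizations $\sO^{sh}_{X,\bar x}$; the exactness of $\alpha^*$ disposes of the vanishing and of the descent to $D^-$ for free, and the only step that calls on the structure theory of the appendix is the limit argument identifying the degree-$n$ cohomology sheaf with $\nu_r(n)_X$ on strict henselizations.
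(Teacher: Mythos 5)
Your argument is correct and is exactly what the paper's bare \qed{} conceals: the formal step (exactness of $\alpha^*$, so that the isomorphism of the theorem \ref{tgl} survives pullback) combined with the identification of the \'etale stalks via the limit argument of the appendix — applying the theorem \ref{tgl} to the \'etale $X$-schemes (regular of finite type over $k$) and passing to the colimit over \'etale neighbourhoods using the theorem \ref{tloc}~a), so that the degree-$n$ cohomology sheaf is computed on strict henselizations and identified with $\nu_r(n)_X$. You have in fact supplied more detail than the paper, and your final paragraph correctly isolates the only genuinely non-formal point (Zariski local rings versus strict henselizations); nothing is missing.
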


\end{document}